\documentclass[11pt]{amsart}
\usepackage[utf8]{inputenc}
\usepackage{amsmath}
\usepackage{amsfonts}
\usepackage{amssymb}
\usepackage{amsthm}
\usepackage{csquotes}
\usepackage[english]{babel}
\usepackage[shortlabels]{enumitem}
\usepackage{lmodern}
\usepackage{cite}

\usepackage{color}

\usepackage{tikz}
\usetikzlibrary{decorations.markings}
\usetikzlibrary{patterns}

\newtheorem{theorem}{Theorem}[section]

\newtheorem{corollary}[theorem]{Corollary}
\newtheorem{lemma}[theorem]{Lemma}
\newtheorem{proposition}[theorem]{Proposition}

\theoremstyle{definition}
\newtheorem{definition}[theorem]{Definition}

\theoremstyle{remark} \theoremstyle{remark}
\newtheorem{remark}[theorem]{Remark}
\newtheorem{example}[theorem]{Example}

\newtheorem*{theorem*}{Theorem}

\DeclareMathOperator{\ad}{ad}

\DeclareMathOperator{\id}{id}
\DeclareMathOperator{\End}{End}
\DeclareMathOperator{\diag}{diag}

\newcommand{\mft}{{\mathfrak t}}

\newcommand{\CC}{{\mathbb C}}
\newcommand{\SO}{{\mathrm{SO}}}
\newcommand{\Sp}{{\mathrm{Sp}}}
\newcommand{\Gr}{{\mathrm{Gr}}}
\newcommand{\Un}{{\mathrm{U}}}
\newcommand{\SU}{{\mathrm{SU}}}

\newcommand{\HH}{{\mathbb{H}}}
\newcommand{\RR}{{\mathbb{R}}}
\newcommand{\ZZ}{{\mathbb{Z}}}

\numberwithin{equation}{section}

%%%%%%%%%%
\begin{document}

\title{GKM actions on almost quaternionic manifolds}
 
 \author{Oliver Goertsches}
  \address{Philipps-Universit\"at Marburg, Fachbereich Mathematik und Informatik, Hans-Meerwein-Straße 6, 35043 Marburg, Germany}
  \email{goertsch@mathematik.uni-marburg.de}
  
 \author{Eugenia Loiudice}
 \email{loiudice@mathematik.uni-marburg.de}

\begin{abstract}
We introduce quaternionic structures on abstract GKM graphs, as the combinatorial counterpart of almost quaternionic structures left invariant by a torus action of GKM type. In the GKM$_3$ setting the $2$-faces of the GKM graph can naturally be divided into quaternionic and complex $2$-faces; it turns out that for GKM$_3$ actions on positive quaternion-K\"ahler manifolds the quaternionic $2$-faces are biangles or triangles, and the complex $2$-faces triangles or quadrangles. We show purely combinatorially that any abstract GKM$_3$ graph with quaternionic structure satisfying this restriction on the $2$-faces of the GKM graph is that of a torus action on  quaternionic projective space $\HH P^n$ or the Grassmannian $\Gr_2(\CC^n)$ of complex $2$-planes in $\CC^n$. 
\end{abstract}

\keywords{GKM actions, GKM graphs, almost quaternionic manifolds, quaternion-K\"ahler manifolds}

\maketitle

\section{Introduction} 

In GKM theory, named after Goresky--Kottwitz--MacPherson \cite{GKM}, one associates to certain torus actions on smooth manifolds a labelled graph that encodes a variety of topological information, both on the action as well as on the space that is acted on. More precisely, one considers orientable manifolds whose cohomology (in this paper with rational coefficients) vanishes in odd degrees, with actions that have finite fixed point set, such that the orbit space of the one-skeleton of the action is homeomorphic to a graph. The theory can be considered as a generalization of the (quasi)toric setting, and applies to many geometrically relevant situations, like equal rank homogeneous spaces \cite{GHZ} or certain equal rank biquotients \cite{GKZbiquot, GKZHamdim6}.

The GKM graph of the action is the orbit space of the one-skeleton mentioned above, equipped with a labeling of the edges given by weights of the isotropy representation. In the general setting these weights are elements in the integer lattice in ${\mathfrak{t}}^*$ modulo $\pm 1$, where ${\mathfrak{t}}$ is the Lie algebra of the acting torus $T$. Additional geometric structures, may leave their mark on the GKM graph, as summarized in \cite[Section 2]{GKMbundles}. Most prominently, given an invariant almost complex structure, the weights may be regarded as elements in ${\mathfrak{t}}^*$; sometimes one speaks about a signed structure on the GKM graph. The existence of invariant symplectic or even K\"ahler structures poses additional combinatorial restrictions on the graph.

In this paper we consider GKM actions that leave invariant an almost quaternionic structure, i.e., a rank $3$ subbundle $Q\subset \End(TM)$ locally spanned by three almost complex structures that satisfy the quaternion relations. The special structure of the isotropy representations in this setting, see Proposition \ref{prop:quatrep}, results in what we define to be a \emph{quaternionic structure} on a GKM graph. Roughly speaking, it consists of a so-called \emph{quaternionic weight} at each vertex, which is the weight of the induced representation on the corresponding fiber of $Q$, a grouping of the edges at each vertex in \emph{quaternionic pairs}, as well as certain compatibility conditions for adjacent vertices, expressed in terms of a compatible connection. For the precise conditions, see Definition \ref{defn:quatstrgraph}. 

The most prominent examples of almost quaternionic manifolds are qua\-ternion-K\"ahler manifolds. Conjecturally, any positive quaternion-Kähler manifold is a Wolf space, and thus in particular an equal rank homogeneous space. Hence, all known examples of positive quaternion-Kähler manifolds admit actions of GKM type. In Section \ref{sec:thewolfspaces} we describe explicitly the GKM graphs, including their quaternionic structure, of hyperbolic projective space $\HH P^n$ and the Grassmannian $\Gr_2(\CC^n)$ of $2$-planes in $\CC^n$.

In most of this article, we will restrict to GKM$_3$ actions and graphs, so that we may speak about $2$-faces of the GKM graph. 
Our approach is comparable to that of \cite{GW}, where a rather different geometric structures was considered: there, it was understood that the $2$-faces of isometric GKM$_3$ actions on orientable positively curved Riemannian manifolds are necessarily biangles and triangles; using purely combinatorial arguments, it was shown that an abstract GKM$_3$ graph with this restriction on the $2$-faces is already that of an action on a compact rank one symmetric space. 

In the almost quaternionic GKM$_3$ setting, we call those $2$-faces  \emph{quaternionic} that correspond to quaternionic pairs of edges; the other ones we call \emph{complex} because we show in Proposition \ref{prop:2facescomplex} that they admit a signed structure compatible with the quaternionic structure. In the quaternion-Kähler setting, we observe that the quaternionic $2$-faces are biangles or triangles, while the complex $2$-faces are either triangles or quadrangles. This motivates the set of assumptions of our entirely combinatorial main result, which completely describes the quaternionic GKM$_3$ graphs with this restriction on the $2$-faces:

\begin{theorem}
Let $\Gamma$ be an abstract GKM$_3$ graph with a quaternionic struture, such that all of its quaternionic $2$-faces are biangles or noncomplex triangles, and all complex $2$-faces are triangles or quadrangles. Then $\Gamma$ is the GKM graph of a torus action on $\HH P^n$ or $\Gr_2(\CC^n)$ by quaternion-K\"ahler automorphisms.
\end{theorem}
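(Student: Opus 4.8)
The plan is to reconstruct the combinatorial classification purely from the local structure of a quaternionic GKM$_3$ graph, mimicking the strategy of \cite{GW} but adapted to the quaternionic/complex dichotomy of the $2$-faces. Fix a vertex $v$ of $\Gamma$, and let $n$ be such that the valence is $2n$; the quaternionic structure groups the edges at $v$ into $n$ quaternionic pairs, and the quaternionic weight selects a distinguished element of $\mft^*$ at each vertex. First I would analyze the two types of $2$-faces through a single quaternionic pair: a quaternionic biangle forces the adjacent vertex to again carry a compatible quaternionic pair whose weights are determined (up to sign and the quaternionic weight shift) by the connection, while a noncomplex triangle imposes a three-term relation among the weights at the three vertices. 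The combinatorial heart is to show that these two possibilities, together with the complex $2$-faces (triangles or quadrangles), rigidly constrain how quaternionic pairs at neighboring vertices fit together.

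The key structural step is to exploit Proposition \ref{prop:2facescomplex}: since complex $2$-faces carry a compatible signed structure, I can treat the ``complex directions'' via the familiar signed-GKM machinery, in which quadrangles correspond to commuting $\CC P^1$'s and triangles to the incidence pattern of a $\CC P^2$ (equivalently a flag). The aim is to isolate, at each vertex, a subgraph built from the complex $2$-faces alone and recognize it combinatorially: if all complex $2$-faces meeting the relevant edges are quadrangles, the local picture is that of the standard torus action on $\Gr_2(\CC^n)$; if triangles appear, they propagate (via the connection and the GKM$_3$ axiom that any two edges span a unique $2$-face) to force the more rigid incidence geometry of $\HH P^n$. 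I would set this up by computing, at a fixed vertex, the multiset of weights and checking that it must coincide with one of the two weight systems described explicitly in Section \ref{sec:thewolfspaces}.

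The decisive global step is then to propagate the local identification across the whole graph using the connection along edges and the GKM$_3$ connectedness of $2$-faces. Concretely, once the weight system at one vertex is pinned down to be that of $\HH P^n$ or $\Gr_2(\CC^n)$, the compatibility conditions of a quaternionic structure (Definition \ref{defn:quatstrgraph}) together with the $2$-face constraints determine the weights at every adjacent vertex, and an induction on distance in the graph shows that $\Gamma$ is isomorphic, as a graph with quaternionic structure, to the model graph. A dichotomy argument is needed to show that the two cases cannot mix: the presence of a single quaternionic biangle (rather than a noncomplex triangle) should force $\HH P^n$ globally, whereas its absence forces $\Gr_2(\CC^n)$; establishing that this local alternative is globally constant is where I expect the argument to be delicate.

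The main obstacle I anticipate is precisely this rigidity/propagation step: ensuring that the local combinatorial data cannot assemble into an ``exotic'' abstract graph not realized by either model. In the positively curved setting of \cite{GW} this was handled by a careful case analysis on how biangles and triangles share edges, and I expect the quaternionic case to require an analogous but more involved analysis, because each vertex now carries both quaternionic pairs and a quaternionic weight, and one must track how the quaternionic weight transforms under the connection across each type of $2$-face. Verifying that the connection is consistent around every quaternionic pair—so that no monodromy obstruction arises—will be the technical crux, and I would expect to need the full strength of the GKM$_3$ hypothesis (that every pair of edges at a vertex lies in a unique $2$-face) to close the induction.
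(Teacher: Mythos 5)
Your overall architecture agrees with the paper's: the classification is indeed governed by the dichotomy ``$\Gamma$ contains a quaternionic biangle'' (forcing $\HH P^n$) versus ``all quaternionic $2$-faces are noncomplex triangles'' (forcing $\Gr_2(\CC^n)$), and the endgame is to match the reconstructed labelled graph against the explicit models of Section \ref{sec:thewolfspaces} via Propositions \ref{prop:labelsHPn} and \ref{prop:actionsonG2Cn}. But there are two genuine problems. First, your intermediate criterion in terms of complex $2$-faces is wrong: you claim that if the complex $2$-faces near a vertex are quadrangles the local picture is that of $\Gr_2(\CC^n)$, whereas complex triangles ``propagate to force'' $\HH P^n$. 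In fact both models contain complex triangles --- in $\Gr_2(\CC^n)$, $n\geq 4$, the $2$-face through the edges $v_{12}v_{1k}$ and $v_{12}v_{1l}$ is a complex triangle, coexisting with complex quadrangles --- so the presence of complex triangles distinguishes nothing. The dichotomy must be read off the quaternionic $2$-faces, as you yourself state later; this internal inconsistency means the ``local identification of the weight system'' step, as described, does not terminate in a well-defined case distinction.

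The second and more serious issue is that the steps you defer as ``delicate'' are precisely the content of the proof, and none of them is supplied. To close the biangle case one needs that a quaternionic biangle on one edge of a complex triangle forces quaternionic biangles on all three of its edges (Lemma \ref{lemma:biangles}); iterating this along paths is what shows $\Gamma$ is the complete graph with all edges doubled, after which Proposition \ref{prop:labelsHPn} applies. To close the other case one needs that a single complex quadrangle excludes quaternionic biangles globally, forces product labels on all quadrangles, and rules out diagonal edges (Lemma \ref{lem:kaehlersquares}), and then that any two quaternionic triangles at a vertex assemble into a $4$-face modelled on $\Gr_2(\CC^4)$ (Lemma \ref{lemma:4-faces}, Figure \ref{fig:actionsonG2Cn}); these $4$-faces are the building blocks of the global reconstruction in Proposition \ref{prop:quatgraphsnobiangles}. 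Each of these lemmas is a nontrivial weight computation combining Figures \ref{fig:quatbiangles} and \ref{fig:actionsonCP2withsign} with the GKM$_3$ condition, and without them your proposed ``induction on distance'' has no inductive step. In short, the proposal is a correct road map of the paper's strategy, but the proof itself is not yet there.
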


As a geometric corollary, we obtain the statement (see Corollary \ref{cor:main}) that a positive quaternion-K\"ahler manifold that admits a torus action of type GKM$_3$ has the rational cohomology ring of either $\HH P^n$ or $\Gr_2(\CC^{n})$. In case of $\Gr_2(\CC^{n})$, this forces the quaternion-K\"ahler manifold to be the Grassmannian by \cite[Theorem 0.2]{LeBrunSalamon}. These geometric consequences, however, were already known: using methods from complex algebraic geometry, \cite[Theorem 5.1]{Occhetta} shows that positive quaternion-K\"ahler manifolds that admit a torus action such that certain extremal fixed point components are isolated are automatically Wolf spaces. In this sense, our result does not give new evidence for the LeBrun--Salamon conjecture, but should be regarded as a combinatorial version of known rigidity statements for quaternionic structures.\\

%{\color{blue} cf proposition 2.3, 2.4 of ON POSITIVE QUATERNIONIC KAEHLER MANIFOLDS WITH B4=1, KIM - LEE}

\noindent {\bf Acknowledgements.} We wish to thank Nicolina Istrati, S\"onke Rollenske, and Uwe Semmelmann for helpful discussions. We are grateful to Michael Wiemeler for pointing out to us the results of reference \cite{Occhetta}, and for fixing the gap in \cite{GW}, see Footnote \ref{footnote}.

\section{Quaternionic geometries}\label{sec:quaternionicgeometries}

Let $M$ be a $4n$-dimensional smooth manifold. 
An \emph{almost quaternionic} structure on $M$ is a rank $3$ subbundle $Q\subset \End (TM)$ locally spanned by almost hypercomplex structures, namely by a triple of (locally defined) almost complex structures $(J_1,J_2,J_3)$ satisfying the quaternionic identities
\[
 J_1J_2=-J_2J_1=J_3.
\]

Although almost quaternionic manifolds in general do not admit a globally defined almost complex structure, their twistor space is naturally an almost complex manifold. We recall that the twistor space of an almost quaternionic manifold is the $2$-sphere bundle $Z\rightarrow M$ of $(Q, \langle\,,\,\rangle)$ where $\langle\,,\,\rangle$ is the inner product on $Q$ defined considering any local quaternionic base $(J_1,J_2,J_3)$ of $Q$ as an orthonormal base.
Observe that every almost quaternionic connection $\nabla$, which is a connection on $M$ leaving $Q$ invariant (i.e., for every vector field $X$ on $M$ and for every section $\xi$ of $Q$ we have that $\nabla_X\xi$ is again a section of $Q$) induces a decomposition of $TZ$ into horizontal and vertical subbundles. Since any element $y$ in $Z$ is a complex structure on $T_xM$, where $x=\pi(y)$, we have a complex structure $J^H_y$ on the horizontal subspace $H_y$ of $T_yZ$ and thus the horizontal subbundle $H$ of $TZ$ is a complex vector bundle. Moreover since the fibers of $\pi$ are unit spheres, the vertical subspace of $T_yZ$ is
\[
 V_y=\{J\in Q_x\,|\, \langle J,y\rangle=0\}
\]
and we can consider on it the complex structure $J_y^V$ defined by
\[
 J_y^V(J)=y\circ J.
\]
Then the vertical subbundle is also a complex bundle over $M$ and 
$$J^{\nabla}=J^H\oplus J^V$$
defines an almost complex structure on $Z$ (depending on the given connection $\nabla$). In \cite[Theorem 3.1]{AMP} the authors proved that this almost complex structure on $Z$ depends only on the torsion of the almost quaternionic connection. This construction generalizes the $4$-dimensional and the quaternionic case considered respectively in \cite{AHS}, \cite{S}.

\vspace{0.2 cm} 
We recall that an Oproiu connection on $M$ is an almost quaternionic connection whose torsion equals the structure tensor $T^Q$ of $(M,Q)$; locally, in the domain of any almost hypercomplex structure $B=(J_1,J_2,J_3)$, this tensor is defined by
\begin{equation}\label{oproiutorsion}
 T^Q=T^B+\sum_{\alpha=1}^3\partial(\tau_\alpha^B\otimes J_\alpha),
\end{equation}
where $\partial$ is the alternation operator, and the $\tau^B_\alpha$'s and $T^B$ denote the structure one-forms and the structure tensor of $B$:
\[
 \tau_\alpha^B(X)=\frac{1}{4n-2}{\mathrm{tr}} (J_\alpha T^B(X,\cdot))
\]
\[T^B=\frac{1}{12}\sum_{\alpha=1}^3[[J_\alpha,J_\alpha]]\] 
where the Nijenhuis tensor of $J_\alpha$ is given by
\[
 [[J_\alpha,J_\alpha]](X,Y)=2 ([J_\alpha X,J_\alpha Y]+J_\alpha^2[X,Y]-J_\alpha[J_\alpha X,Y]-J_\alpha[X,J_\alpha Y]),
\]
for all vector fields $X,Y$ on $M$. The structure tensor $T^Q$ is independent of the choice of local base $B$ of $Q$ (see \cite{oproiu}). Considering an Oproiu connection on $M$ 
then gives, via the construction above, a natural almost complex structure $J^Q$ on $Z$ depending only on $Q$.

\vspace{0.3 cm}
In the Riemannian setting, if an almost quaternionic structure $Q$ on a $4n$-dimensional Riemannian manifold $M$, $n\geq 2$, is preserved by the Levi-Civita connection, then $M$ is called a \emph{quaternion-K\"ahler} manifold. This condition is equivalent to the holonomy group of $(M^{4n},g)$ being a subgroup of $\Sp(n)\Sp(1):=\Sp(n)\times \Sp(1)/\mathbb{Z}_2$ (see for example \cite[Proposition 14.36]{Besse}). A quaternion-Kähler manifold is called \emph{positive} if its Riemannian metric is complete and its scalar curvature is positive.

\vspace{0.3 cm}
Notice that $4$-dimensional Riemannian manifold are excluded from this definition. Indeed, the holonomy group of any oriented $4$-dimensional Riemannian manifold is contained in $\SO(4)=\Sp(1)\Sp(1)$, and hence we have no condition in this case. We recall in the next example that every oriented Riemannian manifold of dimension $4$ (acted on by a compact Lie group $G$) is a ($G$-invariant) almost quaternionic manifold. 

\vspace{0.3 cm} 
We say that an oriented $4$-dimensional Riemanian manifold is \emph{quaternion-K\"ahler} if it is Einstein with nonzero scalar curvature and the Weyl tensor $W$ is self-dual. The definition of $4$-dimensional quaternion-K\"ahler manifold can be understood looking at its associated twistor space. Indeed, the self-duality condition of $W$ is equivalent to the integrability of the almost complex structure $J^Q$ on the twistor space of $M$, see Theorem~13.46 of \cite{Besse} or also \cite{Penrose} and \cite{AHS}. Moreover in dimension $>4$ it well known that any quaternion-K\"ahler manifold is Einstein (see \cite{Berger}) and that its twistor space is a complex manifold, see \cite{Salamon}.

% 
% On the other hand, for the twistor space of a Riemannian $4$-dimensional orientable manifold to be complex contact it is necessary that $M$ is Einstein with nonzero scalar curvature and admits an orientation with respect to which the Weyl tensor $W$ of $g$ is self-dual, namely  $\star W=W$, where $\star$ is the Hodge star operator \cite{}[Ward], [Atiyah, Hitchin,Singer].
% 
% We say that an oriented Riemaniann $4$-dimensional manifold is \emph{quaternion-K\"ahler} if it is Einstein with nonzero scalar curvature and the Weyl tensor $W$ of $g$ is self-dual.

\begin{example}\label{ex:4diminvalmostquat}
Consider any isometric action of a compact connected Lie group $G$ on a $4$-dimensional oriented Riemannian manifold $M$. We claim that $M$ admits a $G$-invariant almost quaternionic structure.

Consider the $G$-equivariant rank $6$ vector bundle ${\mathfrak{so}}(TM)\to M$ of skew-symmetric endomorphisms of $TM$.  The Riemannian metric induces a $G$-equivariant isomorphism ${\mathfrak{so}}(TM)\cong \Lambda^2(T^*M)$ to the bundle of $2$-forms on $M$. Let $Q_\pm\subset {\mathfrak{so}}(TM)$ denote the subbundles that correspond, under this isomorphism, to the bundles of self-dual and anti-self-dual $2$-forms (i.e., the $\pm 1$-eigenspaces of the Hodge $\ast$-operator). As the Hodge $\ast$-operator commutes with orientation-preserving isometries, these eigenspaces, and hence $Q_\pm$, are $G$-invariant.

Now, both bundles $Q_\pm$ define $G$-invariant almost quaternionic structures on $M$. The bundle $Q_+$ is compatible with the given orientation in the sense that this orientation is the same as the one that is induced by the almost complex structures in $Q_+$.  %Indeed, this can be seen directly by observing that the above decomposition corresponds to the unique decomposition of a skew-symmetric $4\times 4$-matrix as a sum of the form
%\[
%\left(\begin{matrix} 0 & -a & -b & -c \\
%a & 0 & -c & b \\
%b & c &  0 & -a \\
%c & -b & a & 0 \end{matrix} \right) + \left(\begin{matrix} 0 & -x & -y & -z \\
%x & 0 & z & -y \\
%y & -z & 0 & x \\
%z & y & -x & 0
%\end{matrix} \right)
%\]
%\textcolor{red}{coordinate-free argument?!? decomposition taken from MathStackexchange. maybe cite?!?}
% https://math.stackexchange.com/questions/1650187/so4-is-isomorphic-to-so3so3
%and computing that the square of a summand equals $-(a^2+b^2+c^2)$ respectively $-(x^2+y^2+z^2)$ times the identity.

Fiberwise, the set of orthogonal almost complex structures is isomorphic to ${\mathrm{O}}(4)/{\mathrm{U}}(2)$, which is a union of two $2$-spheres, which are therefore necessarily the two $2$-spheres of almost complex structures in $Q_\pm$.  Hence, any orthogonal almost complex structure on $M$ is automatically given by a section of either $Q_+$ or $Q_-$, depending on whether its induced orientation is the one of $M$ or not. (Those almost complex structures which are sections of $Q_+$ are called \emph{compatible}.) 
\end{example}

In general, a (locally defined) almost complex structure on a almost quaternionic manifold $M$ is said to be \emph{compatible} if it is a section of $Z$. Note that if $M$ is positive quaternion-Kähler, then it does not admit any global compatible almost complex structure by \cite[Theorem 3.8]{AMP1}\footnote{\label{footnote1} Note that the argument in \cite[Proposition 3.9]{AMP1}, which is used for the proof of \cite[Theorem 3.8]{AMP1}, does not apply in dimension $4$. The nonexistence of a compatible almost complex structure on $\CC P^2$ can be seen for instance by showing via the long exact homotopy sequence of the twistor fibration $\SU(3)/T^2\to \CC P^2$ that it does not admit a section: the map $\pi_5(\SU(3)/T^2)\to \pi_5(\CC P^2)$ is the homomorphism $\ZZ \overset{\cdot 2}\to \ZZ$ which does not have a section.}.

% \vspace{0.3 cm}
% A Riemannian manifold $(M^{4n},g)$, $n\geq 2$, endowed with an almost quaternionic structure $Q$ which is preserved by the Levi-Civita connection $\nabla$ (i.e., for every $\xi$ smooth section of $Q$ and for every vector field $X$ of $M$, $\nabla_X \xi$ is again a section of $Q$) is called a \emph{quaternion-K\"ahler} manifold.
% 
% This condition is equivalent to the fact that the holonomy group of $(M^{4n},g)$ is a subgroup of $\Sp(n)\Sp(1):=\Sp(n)\times \Sp(1)/\mathbb{Z}_2$ (see \cite{}).
% 
% Notice that $4$-dimensional Riemannian manifold $(M,g)$ are excluded from this definition. Indeed, the holonomy group of any oriented $4-$dimensional Riemannian manifold is $\SO(4)=\Sp(1)\Sp(1)$, and hence we have no-condition when $n=1$.

\section{GKM actions and GKM graphs}\label{sec:gkmactions}

In this paper, we will consider actions of compact tori $T=S^1\times \cdots \times S^1$ on compact connected manifolds $M$.  The \emph{(Borel model of) equivariant cohomology} of such an action is defined as 
\[
H^*_T(M):=H^*(M\times_T ET),
\]
where $ET\to BT$ is the classifying bundle of $T$, and $M\times_T BT$, the so--called \emph{Borel construction}, is defined as the quotient of $M\times ET$ by the (free) diagonal $T$-action. We will only consider rational coefficients, and suppress them in the notation. Via the canonical projection $M\times_T ET \to BT$, the equivariant cohomology $H^*_T(M)$ becomes an algebra over the polynomial ring $R:=H^*(BT)$. The action is called \emph{equivariantly formal} if $H^*_T(M)$ is free as an $R$-module.
\begin{example}
If all odd Betti numbers of $M$ vanish, then any $T$-action on $M$ is equivariantly formal. By \cite[Theorem 6.6]{Salamon} this holds true for positive quaternion-K\"ahler manifolds.
\end{example} 
For equivariantly formal actions, the ordinary cohomology ring is encoded in the equivariant cohomology algebra, see e.g.\ \cite[Theorem 3.10.4 and Corollary 4.2.3]{AlldayPuppe}: 
\begin{proposition}\label{prop:eqformalcomputable}
If the $T$-action on $M$ is equivariantly formal, then the inclusion $M\to M\times_T ET$ of any fiber of $M\times_T ET\to BT$ induces a ring isomorphism
\[
H^*(M)\cong H^*_T(M)/R^+\cdot H^*_T(M)
\]
where $R^+\subset R$ denotes the subspace of polynomials with vanishing constant term.
\end{proposition}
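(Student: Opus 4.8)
The plan is to analyze the Leray--Serre spectral sequence of the fibration $M\xrightarrow{\iota} M\times_T ET\xrightarrow{\pi} BT$ and to identify the fiber restriction $\iota^*$ with its edge homomorphism. First I would record that, since $T$ is a torus, $BT$ is a finite product of copies of $\CC P^\infty$ and in particular simply connected; hence the local coefficient system is trivial and the spectral sequence, which is one of $R$-modules and of algebras, takes the form
\[
E_2^{p,q}=H^p(BT)\otimes H^q(M)=R^p\otimes H^q(M)\ \Longrightarrow\ H^{p+q}_T(M).
\]

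The central step is the standard equivalence between equivariant formality and degeneration at $E_2$. Since $M$ is compact, each $H^q(M)$ is finite dimensional and each row $E_2^{\ast,q}$ is a free $R$-module, so $H^*_T(M)$ is finite dimensional in each degree. The $E_\infty$-page is the associated graded of $H^*_T(M)$ for the spectral sequence filtration, and differentials can only decrease dimensions, so in each total degree $n$ one has $\dim_{\mathbb{Q}} H^n_T(M)=\sum_{p+q=n}\dim_{\mathbb{Q}}E_\infty^{p,q}\le\sum_{p+q=n}\dim_{\mathbb{Q}}E_2^{p,q}$, with equality if and only if all differentials vanish. Degeneration then yields $H^*_T(M)\cong R\otimes_{\mathbb{Q}}H^*(M)$ as graded $R$-modules, hence freeness; conversely, freeness forces the Poincaré-series identity and thus degeneration. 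This is precisely the assertion that equivariant formality amounts to $E_2=E_\infty$, which one may also simply invoke from \cite{AlldayPuppe}.

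Granting degeneration, I would finish as follows. The composition $\pi\circ\iota$ collapses the fiber to a point, so $\iota^*\circ\pi^*=0$ on positive degrees; since $R=H^*(BT)$ acts through $\pi^*$, this gives $\iota^*(R^+\cdot H^*_T(M))=0$, and the ring homomorphism $\iota^*$ descends to a ring homomorphism
\[
\bar\iota^*\colon H^*_T(M)/R^+\cdot H^*_T(M)\longrightarrow H^*(M).
\]
Under the spectral sequence, $\iota^*$ is exactly the edge homomorphism onto $E_\infty^{0,\ast}=E_2^{0,\ast}=H^*(M)$, hence surjective, so $\bar\iota^*$ is surjective. Finally, the module splitting $H^*_T(M)\cong R\otimes_{\mathbb{Q}}H^*(M)$ gives in each degree $\dim_{\mathbb{Q}}\bigl(H^*_T(M)/R^+\cdot H^*_T(M)\bigr)=\dim_{\mathbb{Q}}H^*(M)$, both finite. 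A surjection of finite dimensional graded vector spaces with equal dimensions in each degree is an isomorphism, so $\bar\iota^*$ is the desired ring isomorphism.

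The main obstacle is the first part: cleanly establishing the equivalence \emph{equivariantly formal $\iff$ spectral sequence degenerates at $E_2$} together with the resulting $R$-module splitting, and correctly matching the abstract edge homomorphism with the geometric fiber restriction $\iota^*$ at the level of rings. Everything after degeneration is formal — a factorization through the quotient followed by a dimension count — so the delicate point is genuinely the passage between the algebraic freeness hypothesis and the vanishing of all differentials.
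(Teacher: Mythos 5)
The paper itself offers no argument for this proposition: it is stated with a pointer to \cite[Theorem 3.10.4 and Corollary 4.2.3]{AlldayPuppe}, so your proposal should be measured against the standard proof behind that citation --- which is exactly the route you take. Everything downstream of degeneration is correct: $\iota^*$ is the edge homomorphism, it annihilates $R^+\cdot H^*_T(M)$ because $\iota^*\circ\pi^*$ vanishes in positive degrees, surjectivity of $\bar\iota^*$ follows from $E_\infty^{0,*}=E_2^{0,*}$, and the degreewise dimension count closes the argument. One small gloss: the $R$-module splitting $H^*_T(M)\cong R\otimes H^*(M)$ is not literally immediate from $E_2=E_\infty$; degeneration identifies the associated graded of the filtered module $H^*_T(M)$ with the free module $R\otimes H^*(M)$, and one must still lift an $R$-basis of $E_\infty$ to $H^*_T(M)$ --- a routine induction using that the filtration is finite in each total degree and everything is degreewise finite dimensional, but worth saying.

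The genuine soft spot is the one you yourself flag: the parenthetical claim that \enquote{freeness forces the Poincar\'e-series identity and thus degeneration} is not a proof. Freeness gives $P_{H^*_T}=P_R\cdot P_C$ with $C=H^*_T(M)/R^+\cdot H^*_T(M)$, and the subquotient property gives the coefficientwise bound $P_R\cdot P_C\preceq P_R\cdot P_M$; but coefficientwise inequalities cannot be cancelled through $P_R$, since its inverse $(1-t^2)^{\dim T}$ has negative coefficients (e.g.\ $(1+t+t^2+\cdots)(t-t^2)=t\succeq 0$ while $t-t^2\not\succeq 0$), and even granting $P_C\preceq P_M$ nothing in your setup supplies the reverse inequality. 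The actual proof of \enquote{free $\Rightarrow$ degenerate} (equivalently, free $\Rightarrow$ the fiber inclusion is totally nonhomologous to zero, whence all differentials vanish by multiplicativity, since $E_2$ is generated by the bottom row and the left column) goes through the Borel localization theorem and uses the compactness, i.e.\ finite-dimensionality, of $M$; that is precisely the nontrivial content of the cited Allday--Puppe results. Since you explicitly fall back on \cite{AlldayPuppe} at this point, your proof stands --- and is in fact more detailed than the paper's, which consists of the citation alone --- but the citation is carrying the real weight there, not the Poincar\'e-series heuristic.
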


GKM theory, named after Goresky--Kottwitz--MacPherson \cite{GKM}, now poses additional assumptions on the action that make the equivariant cohomology, and hence by Proposition \ref{prop:eqformalcomputable} also the ordinary cohomology, explicitly computable. We will give a brief introduction to the topic; for further details we refer to \cite{GKMsurvey}.

\begin{definition}\label{defn:GKMkaction} The $T$-action on $M$ is called GKM$_k$, for some integer $k\geq 2$, if $M$ is orientable, the odd Betti numbers of $M$ vanish, the fixed point set is finite, and at each fixed point any $k$ weights of the isotropy representation are linearly independent. 

For $k=2$, we simply speak about a GKM action.
\end{definition}
We stress that in our situation we do not have any almost complex or symplectic structure on $M$ that is invariant under the action. Hence, the weights are elements in $\mft^*$ that are only well-defined up to sign. Still, it is reasonable to speak about their linear independence.

Given a GKM $T$-action on $M$, the orbit space $M_1/T$ of the \emph{one-skeleton}
\[
M_1 = \{p\in M\mid \dim T\cdot p \leq 1\}
\]
of the action is homeomorphic to a graph, with vertices corresponding to the fixed points, and edges corresponding to $T$-invariant $2$-spheres -- with the two fixed points in the $2$-sphere being the end points of the edge. Any edge $e$ may be labeled with the weight $\alpha(e)\in \mft^*/\pm 1$ of the isotropy representation of any of the two end points whose weight space is tangent to the sphere.
\begin{definition}
The labeled graph thus obtained is called the \emph{GKM graph} of the GKM action.
\end{definition}

From a more combinatorial point of view, the notion of abstract GKM graph was introduced (for signed GKM graphs, see Definition \ref{defn:signedstructure} below) in \cite{GZ1, GZ2}. We consider an $n$-valent finite graph $\Gamma$. We allow multiple edges between vertices, but no loops. We denote the vertex set of $\Gamma$ by $V(\Gamma)$ and its edge set by $E(\Gamma)$. The edges of $\Gamma$ do not come with a fixed orientation, but we denote by $\tilde{E}(\Gamma)$ the set of edges of $\Gamma$ with orientation. For an oriented edge $e\in \tilde{E}(\Gamma)$ we denote the same edge with opposite orientation by $\bar{e}\in \tilde{E}(\Gamma)$. If we consider an oriented edge $e\in \tilde{E}(\Gamma)$, then we may speak about its initial and terminal vertex $i(e)$ and $t(e)$. The set of oriented edges starting at a vertex $v$ will be denoted $E_v$. 
\begin{definition}
A \emph{connection} on $\Gamma$ is a collection of bijections $\nabla_e:E_{i(e)}\to E_{t(e)}$, for each edge $e\in \tilde{E}(\Gamma)$, satisfying
\begin{enumerate}
\item $\nabla_e e = \bar{e}$
\item $(\nabla_e)^{-1} = \nabla_{\bar{e}}$.
\end{enumerate}
\end{definition}
\begin{definition}
An \emph{abstract GKM graph} $(\Gamma,\alpha)$ is the data of an $n$-valent finite graph $\Gamma$, together with an \emph{axial function} $\alpha:E(\Gamma)\to {\mathbb{Z}}^m/\pm 1$ on the edges of $\Gamma$, such that
\begin{enumerate}
\item For any vertex $v\in V(\Gamma)$ and any  $e,f\in E_v$, $e\neq f$, we have that $\alpha(e)$ and $\alpha(f)$ are linearly independent.
\item There is a connection $\nabla$ on $\Gamma$ \emph{compatible with $\alpha$}, i.e., for any $v\in V(\Gamma)$ and $e,f\in E_v$ there exist lifts $\tilde{\alpha}(f)$ of $\alpha(f)$ and $\tilde{\alpha}(\nabla_ef)$ of $\alpha(\nabla_e f)$ to ${\mathbb{Z}}^m$ such that
\[
\tilde{\alpha}(\nabla_e f) \in \tilde{\alpha}(f) + {\mathbb{Z}}\, \alpha(e).
\]
\end{enumerate}
\end{definition}
It was shown in \cite[p.\ 6]{GZ2} and \cite[Proposition 2.3]{GWnonneg} that the GKM graph of a GKM action always admits a compatible connection, and hence is an abstract GKM graph. Here we identify the integral lattice in ${\mathfrak{t}}^*$ with ${\mathbb{Z}}^m$.

Analogously to Definition \ref{defn:GKMkaction}, an abstract GKM graph is said to be GKM$_k$ if for every vertex and every $k$ edges $e_1,\ldots,e_k\in E_v$ the elements $\alpha(e_1),\ldots,\alpha(e_k)$ are linearly independent.

Note that in general, the compatible connection in an abstract GKM graph is not unique. However, if the graph is GKM$_3$, then it is: given $v\in V(\Gamma)$ as well as $e,f\in E_v$, the GKM$_3$ condition implies that there can be only one edge $h\in E_{t(e)}$ such that $\alpha(e), \alpha(f)$ and $\alpha(h)$ are linearly dependent. As soon as we restrict to GKM$_3$ actions and graphs, we can therefore safely assume $\nabla$ to be fixed as part of the given structure.

The topological relevance of the GKM graph of a GKM action lies in the fact that it encodes the equivariant cohomology (and then, via Proposition \ref{prop:eqformalcomputable}, also the ordinary cohomology) algebra of the action, as originally shown in \cite[Theorem 7.2]{GKM} as a consequence of the Chang--Skjelbred Lemma \cite[Lemma 2.3]{ChangSkjelbred}: 
 
\begin{proposition}\label{prop:graphdeteqcohom}
The GKM graph of a GKM action determines the equivariant cohomology algebra. More precisely, ths inclusion $M^T\to M$ of the fixed point set induces an isomorphism of $R$-algebras
\[
		H^*_T(M) \cong \left\{\left.(f(u))_u\in \bigoplus_{u\in V(\Gamma)} R \,\,\,\right|  \,\, \begin{matrix} f(v) - f(w) \equiv 0 \mod \alpha(e) \\ \text{ for every edge $e$ from $v$ to $w$} \end{matrix} \right\}
	\]
\end{proposition}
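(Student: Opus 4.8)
The plan is to follow the standard route in GKM theory: first reduce the computation of $H^*_T(M)$ to the fixed point set using equivariant formality, then localize the resulting constraint to the one-skeleton via the Chang--Skjelbred lemma, and finally carry out the elementary model computation on each invariant $2$-sphere. Since the odd Betti numbers of $M$ vanish, the action is equivariantly formal, so $H^*_T(M)$ is a free $R$-module. By the localization theorem the kernel of the restriction map $H^*_T(M)\to H^*_T(M^T)=\bigoplus_{u\in V(\Gamma)}R$ is the $R$-torsion submodule of $H^*_T(M)$; freeness forces this kernel to vanish, so the map is an injective $R$-algebra homomorphism, and it remains only to identify its image.

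For an equivariantly formal action the Chang--Skjelbred lemma \cite[Lemma 2.3]{ChangSkjelbred} identifies this image with the intersection
\[
\im\bigl(H^*_T(M)\to H^*_T(M^T)\bigr)=\bigcap_K \im\bigl(H^*_T(M^K)\to H^*_T(M^T)\bigr),
\]
where $K$ ranges over the subtori of corank one in $T$ (so that $T/K\cong S^1$). The next step is geometric: I would use the GKM (that is, GKM$_2$) hypothesis to show that every connected component of $M^K$ meeting $M^T$ is either a single fixed point or a $T$-invariant $2$-sphere. Indeed, at a fixed point $p\in M^K$ the tangent space to the component through $p$ is spanned by the weight spaces whose weights are proportional to the character defining $K$; by pairwise linear independence of the weights there is at most one such weight, so the component has real dimension at most two. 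Each nontrivial component is thus an edge $e$ of $\Gamma$ with $K=\ker\alpha(e)$ and endpoints $i(e),t(e)$.

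It then suffices to analyze a single invariant $2$-sphere $S\cong\CC P^1$, on which $T$ acts through the rotation quotient $T/K$ with weight $\alpha(e)$. A direct computation of $H^*_T(S)$---for instance from Mayer--Vietoris applied to the two invariant disks about the poles, or from the known structure of $H^*_T(\CC P^1)$---shows that restriction to the two fixed points is injective with image exactly $\{(a,b)\in R\oplus R : a\equiv b \bmod \alpha(e)\}$; this is well defined despite the sign ambiguity of $\alpha(e)$, since the ideal generated by $\alpha(e)$ and by $-\alpha(e)$ coincide. As $M^K$ restricted to $M^T$ is a disjoint union of such spheres and isolated points, membership in $\im(H^*_T(M^K)\to H^*_T(M^T))$ amounts to the congruence $f(v)\equiv f(w)\bmod \alpha(e)$ for every edge $e$ from $v$ to $w$ contained in $M^K$. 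Intersecting over all corank-one $K$ imposes precisely one such congruence per edge of $\Gamma$, which is the asserted description; since all maps involved are $R$-algebra homomorphisms, the identification respects the ring structure.

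The main obstacle is the geometric reduction of the second paragraph. The Chang--Skjelbred lemma is cited and the sphere computation is routine, but the identification of the components of $M^K$ rests squarely on the GKM$_2$ hypothesis: one must verify that at each fixed point at most one isotropy weight is proportional to the character defining $K$, so that the component of $M^K$ through that point has real dimension at most two. Components of $M^K$ disjoint from $M^T$ contribute trivially to the restriction to $M^T$ and hence impose no further constraint, so no additional argument about them is needed.
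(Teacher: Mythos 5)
Your argument is correct and is exactly the standard proof that the paper itself invokes: it does not prove Proposition \ref{prop:graphdeteqcohom} but cites \cite[Theorem 7.2]{GKM} as a consequence of the Chang--Skjelbred Lemma, which is precisely the route you take (equivariant formality, injectivity of restriction to $M^T$, Chang--Skjelbred reduction to corank-one isotropy, identification of the relevant components of $M^K$ via the GKM$_2$ condition, and the $2$-sphere computation). No gaps; your care about components of $M^K$ disjoint from $M^T$ and about the sign ambiguity of $\alpha(e)$ addresses the only points that usually need comment.
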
 
The right hand side of this equation is also called the equivariant graph cohomology of $\Gamma$, which is defined in the same way for an abstract GKM graph \cite{GZ2}.
 
Given an abstract GKM graph $(\Gamma,\alpha)$, with a fixed compatible connection $\nabla$, a vertex $v\in V(\Gamma)$ and edges $e,f\in E_v$, we obtain a unique sequence of edges by successively applying the connection: we put $e_1 = f$, $e_2 = e$, and then $e_k = \nabla_{e_{k-1}} e_{k-2}$ for all $k\geq 3$. This closed path is called a \emph{connection path} in $\Gamma$. As we observed above that for a GKM$_3$ graph the connection is unique, in the GKM$_3$ setting connection paths are independent of any additional choice. Moreover, while connection paths may self-intersect for a general GKM graph, this is not the case for GKM$_3$ graphs: all the edges $e_i$ in the connection path defined by $v$, $e$ and $f$  satisfy that $\alpha(e_i)$ vanishes on $\ker \alpha(e)\cap \ker \alpha(f)$, and by the GKM$_3$ condition at any vertex there can be at most two edges whose labels vanish on a given codimension two subspace. We will call the connection paths in a GKM$_3$ graph the \emph{$2$-faces} of $\Gamma$.

Geometrically, given a GKM$_3$ action on a manifold $M$, with GKM graph $\Gamma$, the $2$-skeleton 
\[
M_2 = \{p\in M\mid \dim T\cdot p\leq 2\}
\]
is a union of $4$-dimensional $T$-invariant submanifolds, which are GKM manifolds on their own. Their GKM graphs are exactly the $2$-faces of $\Gamma$.

\begin{definition} \label{defn:signedstructure}
Given an abstract GKM graph $(\Gamma,\alpha)$, a \emph{signed structure} on $\Gamma$ is a lift $\tilde{\alpha}:\tilde{E}(\Gamma) \to {\mathbb{Z}}^k$ of $\alpha$ such that
\begin{enumerate}
\item For all $e\in \tilde{E}(\Gamma)$ we have $\tilde{\alpha}(e) = -\tilde{\alpha}(\bar{e})$.
\item There exists a compatible connection satisfying that for all $v\in V(\Gamma)$ and $e,f\in E_v$ we have
\[
\tilde{\alpha}(\nabla_e f) \in \tilde{\alpha}(f) + {\mathbb{Z}} \tilde{\alpha}(e).
\]
\end{enumerate}
Given a signed structure on $\Gamma$, we will call $\Gamma$ a \emph{signed GKM graph}; sometimes, in case no signed structure is fixed, and we wish to emphasize this fact, we will call $\Gamma$ an \emph{unsigned GKM graph}.
\end{definition}

The original definition of an abstract GKM graph in \cite{GZ1, GZ2}  was that of a signed GKM graph, having in mind the geometric situation of a torus action with an invariant symplectic or almost complex structure, in which case the weights of the isotropy representations at the fixed points are well-defined elements in ${\mathfrak{t}}^*$. In our setting of actions on almost quaternionic manifolds the GKM graph will in general not admit a signed structure. However, the notion of a signed GKM graph is still relevant here, as many of the $2$-faces of the GKM graphs that occur will admit a signed structure, see Proposition \ref{prop:2facescomplex} below.

\section{Torus actions on almost quaternionic manifolds}\label{sec:torusonaqm}
In this section we will consider torus actions on almost quaternionic manifolds leaving invariant the structure. We will focus on the case of an action of GKM type, and find the natural combinatorial structure on the GKM graph induced by the almost quaternionic structure. To this end, we first prove a proposition about representations on quaternionic vector spaces that we will apply to the isotropy representations of our actions.

%recall real/complex/quaternionic reps. If weights are pw linearly independent, then necessarily complex.

%\textcolor{red}{compare Fuquan Fang, Positive Quaternionic K\"ahler Manifolds and Symmetry Rank, p.\ 7, using Kostant's Theorem....}

\begin{proposition}\label{prop:quatrep}
Consider a real representation of a torus $T$ on a finite-dimensional quaternionic vector space $V$. Let $Q\subset \End_\RR(V)$ be the $3$-dimensional subspace spanned by multiplication with $i, j$ and $k$. Assume that the induced $T$-representation on $\End_\RR(V)$ by conjugation leaves invariant $Q$. Then one of the following holds true:
\begin{enumerate}
\item The $T$-representation on $Q$ is trivial, i.e., the $T$-representation on $V$ is quaternionic.
\item $Q$ decomposes uniquely as $Q= Q_0\oplus Q'$ into the sum of a one-dimensional trivial $T$-module $Q_0$ and a two-dimensional weight space $Q'$. We fix any of the two elements $I\in Q_0$ that define a complex structure on $V$. The $T$-represen\-tation on $V$ is complex with respect to $I$, and $A\mapsto IA$ defines a $T$-invariant complex structure on $Q'$. Let $\lambda\in \mft^*$ be the weight of this complex representation.

Then the weights of the complex $T$-representation $V$ occur in pairs $\alpha, \lambda-\alpha$.
\end{enumerate}

\end{proposition}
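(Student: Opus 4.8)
The plan is to use that $T$ is compact and connected, which forces its conjugation action on the three-dimensional space $Q$ to be very rigid. Since conjugation by a group element is an algebra automorphism of $\End_\RR(V)$, it preserves the natural inner product on $Q$ in which a quaternionic basis is orthonormal (characterized by $AB+BA=-2\langle A,B\rangle\,\id$ for $A,B\in Q$). Thus the conjugation action is a homomorphism $T\to \SO(Q)\cong\SO(3)$, landing in $\SO(3)$ rather than $\mathrm{O}(3)$ because $T$ is connected. Its image is a compact connected abelian subgroup, hence lies in a maximal torus of $\SO(3)$, i.e.\ in the group of rotations about some axis. This leaves exactly two cases: the homomorphism is trivial, or its image is a nontrivial rotation group fixing a unique axis.

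If the homomorphism is trivial, every $g\in T$ commutes with multiplication by $i,j,k$, so $V$ is quaternionic and we are in case~(1). Otherwise, the fixed set of the rotation action is precisely the axis, a canonically determined line $Q_0=\RR I$, and its orthogonal complement $Q'=Q_0^\perp$ is the nontrivial weight space; this forces the uniqueness of the splitting $Q=Q_0\oplus Q'$. Fixing a unit $I\in Q_0$, the identity $gIg^{-1}=I$ says that $g$ commutes with $I$, so $V$ is complex with respect to $I$. A short computation with imaginary quaternions (for $A\perp I$ one has $IA=I\times A\in Q'$, and $A\mapsto IA$ squares to $-\id$ on $Q'$) shows that $A\mapsto IA$ is a complex structure on $Q'$; it is $T$-invariant because conjugation is multiplicative and fixes $I$. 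Let $\lambda\in\mft^*$ denote the resulting weight.

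The heart of the argument is the pairing of weights. I would decompose $V=\bigoplus_\alpha V_\alpha$ into weight spaces for the complex structure $I$, choose a unit $J\in Q'$, and set $K=IJ$, so that $I,J,K$ is a standard quaternionic triple and the rotation action reads $gJg^{-1}=\cos(\lambda(X))\,J+\sin(\lambda(X))\,K$ for $g=\exp(X)$. The key is the operator identity $\rho(g)\circ J=(gJg^{-1})\circ\rho(g)$. Evaluating it on $v\in V_\alpha$, substituting $\rho(g)v=(\cos(\alpha(X))\,\id+\sin(\alpha(X))\,I)v$, and simplifying with the relations $JI=-K$, $KI=J$ together with the angle-subtraction formulas, gives $\rho(g)(Jv)=e^{I(\lambda-\alpha)(X)}(Jv)$. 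Hence $J$ maps $V_\alpha$ into $V_{\lambda-\alpha}$, and since $J$ is invertible it restricts to an isomorphism $V_\alpha\to V_{\lambda-\alpha}$ (it is $I$-antilinear, but still dimension-preserving). This yields the desired pairing $\alpha,\lambda-\alpha$ with equal multiplicities.

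I expect the only real obstacle to be the bookkeeping in this last computation, where $J$ plays two roles simultaneously: as an operator on $V$ and as an element of $Q'$ on which $T$ acts nontrivially by rotation. Heuristically, the $I$-antilinearity of $J$ would send the weight $\alpha$ to $-\alpha$, but this is then shifted by the weight $\lambda$ of $J$ itself, producing $\lambda-\alpha$; getting the sign and the direction of this shift right is the one delicate point, while the rest of the proof is structural.
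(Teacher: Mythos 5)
Your proposal is correct and follows essentially the same route as the paper: the decisive step in both is to apply an element $J\in Q'$ to a weight vector and use the equivariance of $J$ (together with $JI=-IJ$) to conclude $J\colon V_\alpha\to V_{\lambda-\alpha}$. The only difference is presentational: you work at the group level with the $\SO(3)$ picture and angle-subtraction formulas, while the paper runs the same computation infinitesimally, via $X\cdot Jv=[X,J]v+J(X\cdot v)=(\lambda(X)-\alpha(X))I(Jv)$, and establishes the $I$-invariance of $Q'$ through a double-commutator characterization rather than the invariant inner product.
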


\begin{proof}
We assume that (1) does not hold. Then $Q$ decomposes uniquely as $Q = Q_0\oplus Q'$ into the sum of a one-dimensional trivial $T$-module $Q_0$ and a two-dimensional weight space $Q'$. As we did not introduce a complex structure on $Q'$ yet, we have to consider the weight $\lambda$ of $Q'$ as an element in $\mft^*/\pm 1$ for the moment.

Any nonzero element in $Q$ is, up to rescaling, a complex structure on $V$. Hence, there are exactly two elements in the line $Q_0$ that define a complex structure on $V$; we fix one of them and call it $I$. Let us show that $I$ defines a complex structure on $Q'$ by composition: $Q'\to Q'; A\mapsto IA$.

For this, consider $\tilde{Q}:= Q\oplus \langle \id_V\rangle$. Also $\tilde{Q}$ is a $T$-module, and we have
\[
Q' = \{A\in \tilde{Q}\mid [X,[X,A]] = -\lambda(X)^2A \textrm{ for all }X\in \mft\}.
\]
Note that the expression $\lambda(X)^2$ is well-defined. Now observe that for $A\in Q'$, also $IA\in Q'$: we clearly have $IA\in \tilde{Q}$, and compute, using $[X,I]=0$ for all $X\in \mft$, that
\[
[X,[X,IA]] = X^2IA - 2XIAX + IAX^2 = I[X,[X,A]] = -\lambda(X)^2IA.
\]  
Observe that also as an endomorphism of $Q'$, $I$ squares to $-\id$. From now on, consider $\lambda\in \mft^*$ as a weight of the complex representation on $Q'=Q_\lambda$.

Let $V_\alpha$ be some weight space of $V$. In order to show that also $\lambda-\alpha$ is a weight of $V$, we claim that $Q_\lambda \cdot V_\alpha = V_{\lambda -\alpha}$.

To see this, fix $v\in V_\alpha$ nonzero. Choose $J\in Q_\lambda$; then $Q_\lambda$ is spanned by $J$ and $IJ$. Note that because $IJ$ is again an element of $Q$, $IJ = -JI$ and the elements $I$, $J$, and $IJ$ form a standard basis of the imaginary quaternions.

Using this, for $X\in \mft$, we compute $X\cdot Jv = [X,J] v + J(X\cdot v) = \lambda(X)IJ v + J(\alpha(X)Iv) = (\lambda(X)-\alpha(X))I(Jv)$.
\end{proof}
\begin{remark}
If one replaces $I$ by $-I$ in case (2), then $\lambda$ as well as all the weights of the $T$-representation $V$ change their sign. This is consistent with the description of the weights as pairs $\alpha$, $\lambda-\alpha$, as $-(\lambda-\alpha) = -\lambda - (-\alpha)$.
\end{remark}
\begin{remark}
If one restricts the $T$-representation on $V$ to $\ker \lambda$, one obtains, by definition of $\lambda$, a quaternionic (i.e., ${\mathbb{H}}$-linear) representation on $V$.
\end{remark}

Consider an action of a torus $T$ on an almost quaternionic manifold $M$ by automorphisms, and denote the three-dimensional subbundle defining the almost quaternionic structure by $Q\subset \End(TM)$. Then, for any $p\in M$, the tangent space $T_pM$ becomes a quaternionic vector space, uniquely determined up to an identification of $Q_p$ with the imaginary quaternions. 

Consider the isotropy representation of $T_p$ on $T_p M$, as well as its induced representation on $\End T_pM$. By definition of an almost quaternionic automorphism, this representation leaves invariant $Q_p\subset \End_\RR T_pM$. Restricting to the identity component of $T_p$, we are hence in the situation of Proposition \ref{prop:quatrep}.

Assuming that the $T$-action is of GKM type, for any $p\in M^T$ the $T$-isotropy representation has pairwise linearly independent weights, and in particular cannot be quaternionic. Hence, we are always in case (2) of Proposition \ref{prop:quatrep}.

\begin{proposition}\label{prop:aqmfixedsubmfd}
Consider an action of a torus $T$ on a simply-connected almost quaternionic manifold $M$ by automorphisms, and $H\subset T$ a subtorus. Let $N$ be a connected component of $M^H$. Then:
\begin{enumerate}
\item If the $H$-action on the vector bundle $Q|_N$ is trivial, then $N$ is an almost quaternionic submanifold.
\item If the $H$-action on the vector bundle $Q|_N$ is not trivial, then there is a (up to sign unique) section of $Q|_N$ which defines a $T$-invariant almost complex structure on $N$.
\end{enumerate}
\end{proposition}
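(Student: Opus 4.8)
The plan is to reduce everything to the pointwise statement of Proposition \ref{prop:quatrep} and then globalize over the connected submanifold $N$. First I recall that, $H$ being a compact torus, the fixed set $M^H$ is a smooth submanifold, so $N$ is smooth with $T_pN=(T_pM)^H$ the zero-weight space of the isotropy $H$-representation on $T_pM$ for every $p\in N$. Since $T$ acts by almost quaternionic automorphisms and $H\subset T$ fixes $p$, the conjugation action of $H$ on $\End(T_pM)$ preserves $Q_p$; thus at each $p\in N$ we are exactly in the setting of Proposition \ref{prop:quatrep}, so either $H$ acts trivially on $Q_p$, or $Q_p=Q_{0,p}\oplus Q'_p$ splits into the fixed line and a $2$-dimensional nonzero weight space.

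For part (1), assume $H$ acts trivially on $Q|_N$. Then every $J\in Q_p$ commutes with the isotropy action of $H$, hence preserves the weight decomposition of $T_pM$, in particular its zero-weight space $T_pN$. Since $J^2=-\id$, the restriction $J|_{T_pN}$ is a complex structure on $T_pN$ when $\dim N>0$, which also gives injectivity of $Q_p\to\End(T_pN)$. Restricting a local hypercomplex frame $(J_1,J_2,J_3)$ of $Q$ therefore yields a rank-$3$ subbundle of $\End(TN)$ satisfying the quaternionic identities, i.e. an almost quaternionic structure on $N$; note that $T_pN$ is then an $\HH$-submodule of $T_pM$, so $\dim N\equiv 0 \bmod 4$.

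For part (2) the first task is to upgrade the pointwise alternative to a global statement on the connected base $N$. Averaging over $H$ gives an invariant metric on $Q|_N$ and an $H$-isotypic decomposition into smooth subbundles; the isomorphism type of the $H$-representation on the fibers $Q_p$ is locally constant (its character is continuous in $p$ with values in a discrete set), hence constant on connected $N$, so in particular the weight $\lambda$ of case (2) is globally well-defined. Consequently the action is either trivial on all of $Q|_N$ or nowhere trivial, and in the nontrivial case $Q_0:=(Q|_N)^H$ is a line subbundle with rank-$2$ complement $Q'$ and $\lambda\neq 0$ throughout. The remaining point is to produce a \emph{global} unit section of $Q_0$. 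Here I would argue by orientability: $Q$ has structure group $\SO(3)$ and is orientable, while $Q'$ is orientable because $H$-equivariance with $\lambda\ne 0$ forces its transition functions into the centralizer $\CC^{\ast}\subset\mathrm{GL}^+(2,\RR)$ of the full circle $\lambda(H)\subset\SO(2)$; by the Whitney formula $w_1(Q_0)=w_1(Q|_N)+w_1(Q')=0$, so $Q_0$ is a trivial real line bundle (the hypothesis that $M$ is simply connected is a convenient standing assumption here). Choosing an orientation of $Q_0$ gives a global unit section $I$, unique up to the overall sign $I\mapsto -I$.

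It remains to verify that $I$ has the required properties, which I expect to be routine. As $I_p\in(Q_p)^H$ commutes with the $H$-isotropy action, it preserves $T_pN$ and restricts there to a complex structure ($I^2=-\id$), so $I$ is an almost complex structure on $N$. For $T$-invariance, note that $T$ is abelian, hence commutes with $H$ and preserves both $Q_0$ and its metric; each $t\in T$ therefore sends the unit section $I$ to $\pm I$, and since $T\times N$ is connected and the sign is $+$ at $t=e$, it is identically $+$, so $I$ is $T$-invariant. The main obstacle in the whole argument is the globalization in part (2): showing that the fiberwise splitting of Proposition \ref{prop:quatrep} assembles into subbundles of constant rank over $N$ and, above all, that the fixed line bundle $Q_0$ carries a global unit section rather than only local ones.
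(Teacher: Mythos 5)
Your proof is correct and follows essentially the same route as the paper: part (1) is the same observation that $H$-fixed endomorphisms preserve $T_pN=(T_pM)^H$, and part (2) uses the identical orientability chain ($Q|_N$ orientable, the weight-$\lambda$ subbundle $Q'$ oriented by the nontrivial $H$-action, hence the fixed line bundle $Q_0$ orientable and trivial, normalized section giving the almost complex structure). You merely supply more detail than the paper on two points it leaves implicit — the local constancy of the fiberwise $H$-isotype and the connectedness argument for $T$-invariance — both of which are fine.
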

\begin{proof}
Consider the $H$-action on the vector bundle $Q|_{N}$. As the $H$-action on $N$ is trivial, all $H$-representations on the fibers are isomorphic. If the $H$-action on $Q|_N$ is trivial, then each element in $Q|_N$ leaves invariant $TN$. By definition, this means that $N$ is an almost quaternionic submanifold. 

If the $H$-action on $Q|_N$ is not trivial, then $Q|_N$ decomposes as the sum of a line bundle pointwise fixed by $H$ and a two-dimensional invariant subbundle without fixed vectors. As $M$ is simply-connected, the vector bundle $Q$ on $M$ is necessarily orientable. Hence, the restriction of $Q$ to $N$ is orientable as well.  Moreover, as the two-dimensional invariant subbundle has no fixed vectors, it is also orientable (choose $X\in {\mathfrak{t}}$ acting nontrivially on the fibers; then an orientation is given by the fiberwise bases $(v,X\cdot v)$, with $v$ any vector in a fiber of the subbundle). Hence, its complement, the line subbundle of $H$-fixed vectors, is orientable, thus trivial; an appropriately normalized section of it defines an almost complex structure on $N$, which is automatically $T$-invariant.
\end{proof}

\begin{definition}\label{defn:quatstrgraph}
Let $(\Gamma,\alpha)$ be an abstract (unsigned) GKM graph. Then a \emph{quaternionic structure} on $\Gamma$ consists of the choice,  for any vertex $v$, of an element $\lambda(v)\in \mft^*/\pm 1$ (called \emph{quaternionic weight}) as well as of a grouping of the edges at $v$ into pairs (called \emph{quaternionic pairs of edges}) such that the following two conditions are satisfied.
\begin{enumerate}
\item For any quaternionic pair of edges $e,f$ at a vertex $v$, there are lifts $\tilde{\alpha}(e)$ and $\tilde{\alpha}(f)$ of the weights $\alpha(e)$ and $\alpha(f)$ such that $\pm (\tilde{\alpha}(e)+\tilde{\alpha}(f))=\lambda(v)$.
\end{enumerate}
Having fixed the quaternionic pairs of edges $e_i$, $f_i$, $i=1,\ldots,n$, at a vertex $v$, we consider the weights $\alpha(e_i)$ and $\alpha(f_i)$, as well as the quaternionic weight $\lambda(v)$. By (1), if we choose a lift of any one of these weights, there are uniquely determined lifts of all other weights $\tilde{\alpha}(e_i)$, $\tilde{\alpha}(f_i)$ and $\tilde{\lambda}(v)$ such that $\tilde{\alpha}(e_i) + \tilde{\alpha}(f_i) = \tilde{\lambda}(v)$ for all $i$. We will call any subset of these lifts \emph{quaternionically compatible}. Clearly, if a set of lifts is quaternionically compatible, then they stay quaternionically compatible when we replace them all by their negatives.
\begin{enumerate}
\item[(2)] For any edge $e$ and lifts $\tilde{\alpha}(e)$ and $\tilde{\lambda}(i(e))$ that are quaternionically compatible, there is a lift $\tilde{\lambda}(t(e))$ which satisfies  $\tilde{\lambda}(t(e))-\tilde{\lambda}(i(e))\equiv 0 \mod \alpha(e)$ and which is quaternionically compatible with $-\tilde{\alpha}(e)$. 

Further, there shall exist a compatible connection $\nabla$ which respects quaternionic pairs of edges, and which satisfies additionally that for any edge $f$ at $i(e)$, with lift $\tilde{\alpha}(f)$ chosen quaternionically compatibly with $\tilde{\alpha}(e)$ and $\tilde{\lambda}(i(e))$, the lift $\tilde{\alpha}(\nabla_e f)$ with  $\tilde{\alpha}(\nabla_e f)- \tilde{\alpha}(f)\equiv 0 \mod \alpha(e)$ is quaternionically compatible with $\tilde{\lambda}(t(e))$ and $-\tilde{\alpha}(e)$. (See Figure \ref{fig:quaternioniccompatibility}. In this and the following figures, the quaternionic weights are depicted in a box.)
 \end{enumerate}
\end{definition}

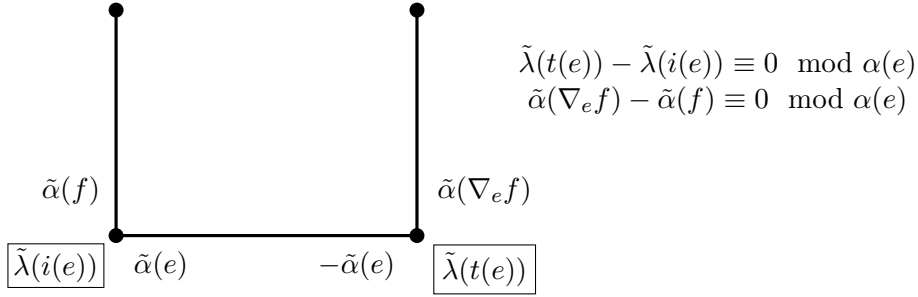
\begin{figure}[h]
		\begin{tikzpicture}

			\draw [very thick] (0,0) -- ++ (4,0) -- ++ (0,3);
			\draw [very thick] (0,0) -- ++ (0,3);
			\node at (0,0)[circle,fill,inner sep=2pt] {};
			\node at (0,3)[circle,fill,inner sep=2pt]{};
			\node at (4,3)[circle,fill,inner sep=2pt]{};
			\node at (4,0)[circle,fill,inner sep=2pt]{};
			\node at (-.8,-.4)[rectangle, draw, inner sep =2pt]{$\tilde{\lambda}(i(e))$};
			\node at (.6,-.4) {$\tilde{\alpha}(e)$};
			\node at (4.9,-.45) [rectangle, draw, inner sep = 3pt]{$\tilde{\lambda}(t(e))$};
			\node at (3.2,-.4) {$-\tilde{\alpha}(e)$};

			\node at (-.6,.6) {$\tilde{\alpha}(f)$};

			\node at (4.9,.6) {$\tilde{\alpha}(\nabla_e f)$};
				
				\node at (8,1.8) {$\tilde{\alpha}(\nabla_ef)-\tilde{\alpha}(f) \equiv 0 \mod \alpha(e)$};
				\node at (8,2.3) {$\tilde{\lambda}(t(e))-\tilde{\lambda}(i(e)) \equiv 0 \mod \alpha(e)$};

		\end{tikzpicture}
		\caption{Quaternionically compatible choices of lifts}\label{fig:quaternioniccompatibility}
		\end{figure}

\begin{proposition}\label{prop:gammahasquat}
Consider a GKM action of a torus $T$ on an almost quaternionic manifold by automorphisms. Then the GKM graph of $M$ admits a quaternionic structure.
\end{proposition}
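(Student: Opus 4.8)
The plan is to verify, for the GKM graph of the given action, each piece of data and each condition in Definition~\ref{defn:quatstrgraph}, by reading off the relevant structure from the isotropy representations via Proposition~\ref{prop:quatrep}. At each fixed point $p\in M^T$ the tangent space $T_pM$ is a quaternionic vector space on which $T$ acts, and by the discussion following Proposition~\ref{prop:quatrep} we are always in case~(2): the fibre $Q_p$ splits as $Q_0\oplus Q'$ with $Q'$ a weight space of weight $\lambda$, and the weights of $T_pM$ come in pairs $\alpha,\lambda-\alpha$. First I would \emph{define the quaternionic weight} $\lambda(v)\in\mft^*/\pm1$ at the vertex $v$ corresponding to $p$ to be this weight $\lambda$, and \emph{define the quaternionic pairing} of edges at $v$ by grouping the two edges whose weights are $\alpha$ and $\lambda-\alpha$ (these are genuine edges of $\Gamma$ since their weight spaces are tangent to invariant $2$-spheres). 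Condition~(1) of the definition is then immediate: choosing the lift $I\in Q_0$ fixes a genuine (signed) $\lambda$ and signed weights $\alpha,\lambda-\alpha$ whose sum is $\lambda$, so $\pm(\tilde\alpha(e)+\tilde\alpha(f))=\lambda(v)$ holds by construction.

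The substance of the proof is condition~(2), which is about how the quaternionic data at $i(e)$ and $t(e)$ are matched along an invariant $2$-sphere $S$ corresponding to $e$. The key geometric input is that $S$ is a $T$-invariant almost quaternionic submanifold (apply Proposition~\ref{prop:aqmfixedsubmfd} to the generic isotropy subtorus $H$ of $S$: here the two fixed points of $S$ are the components of $M^H$ meeting $S$ that must be compared). I would analyse the $T$-representation on $Q$ restricted to $S$, or more directly compare the representations $Q_{i(e)}$ and $Q_{t(e)}$ as the two endpoints of $S$. Since $H$ acts trivially on $S$ but the relevant structure varies only in the direction of $\alpha(e)$, the quaternionic weights $\lambda(i(e))$ and $\lambda(t(e))$ must agree modulo $\alpha(e)$; orienting $S$ reverses the sign of the weight $\alpha(e)$ at the two ends (it becomes $-\tilde\alpha(e)$ at $t(e)$), which is exactly the compatibility $\tilde\lambda(t(e))-\tilde\lambda(i(e))\equiv0\bmod\alpha(e)$ together with quaternionic compatibility with $-\tilde\alpha(e)$ asserted in the first part of~(2).

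For the second part of~(2) I would use the compatible connection $\nabla$ already guaranteed for the GKM graph by \cite{GZ2, GWnonneg}, and verify that it can be chosen to \emph{respect quaternionic pairs} and to transport the signed weights compatibly. The point is that $\nabla_e$ is geometrically the isotropy action of parallel transport along $S$ between the two fixed points, which is $T$-equivariant; hence it intertwines the isotropy representations on $T_{i(e)}M$ and $T_{t(e)}M$ up to the shift by $\Z\,\alpha(e)$. Because this transport commutes with the quaternionic structure $Q$ along $S$ (by invariance of $Q$ and the almost quaternionic submanifold structure of $S$), it carries the pair $\alpha,\lambda-\alpha$ at $i(e)$ to a pair $\alpha',\lambda'-\alpha'$ at $t(e)$ with $\lambda'=\lambda(t(e))$; this is precisely the statement that $\nabla$ respects quaternionic pairs and that the lift $\tilde\alpha(\nabla_ef)$ with $\tilde\alpha(\nabla_ef)-\tilde\alpha(f)\equiv0\bmod\alpha(e)$ is quaternionically compatible with $\tilde\lambda(t(e))$ and $-\tilde\alpha(e)$.

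I expect the main obstacle to be the \emph{bookkeeping of signs and lifts}: Proposition~\ref{prop:quatrep} only determines $\lambda$ and the weights up to a simultaneous global sign (the choice of $I$ versus $-I$), so one must check that the sign choices made independently at $i(e)$ and $t(e)$ can be coordinated consistently along $e$, and that this coordination is exactly the quaternionic compatibility in the definition. Making rigorous that the isotropy/parallel-transport identification along $S$ respects the splitting $Q_0\oplus Q'$ and the pairing — rather than just the unsigned weights — is the delicate step; it amounts to tracking the $T$-action on $Q|_S$ and checking that the distinguished fixed line $Q_0$ and its complex structure $I$ transport compatibly. The remaining verifications are the routine translations of linear-independence and the existence of a compatible connection into the abstract-graph language, which hold because the action is GKM.
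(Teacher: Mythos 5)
Your overall skeleton matches the paper's: the quaternionic weight and the pairing of edges come from Proposition \ref{prop:quatrep}, condition (1) of Definition \ref{defn:quatstrgraph} is immediate, and the substance is condition (2), handled by applying Proposition \ref{prop:aqmfixedsubmfd} to the invariant $2$-sphere $N$ corresponding to an edge and then arranging a compatible connection. But your key geometric input is stated wrongly: you assert that $N$ is a $T$-invariant \emph{almost quaternionic} submanifold, i.e., case (1) of Proposition \ref{prop:aqmfixedsubmfd}. This is impossible for dimension reasons --- an almost quaternionic submanifold has tangent spaces invariant under $Q$, hence dimension divisible by $4$, whereas $N$ is a $2$-sphere. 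The paper explicitly notes that one is \emph{necessarily} in case (2): there is an (up to sign unique) section $J$ of $Q|_N$ defining a $T$-invariant almost complex structure on $TM|_N$. Since your mechanism for showing that the connection respects quaternionic pairs (``the transport commutes with $Q$ along $S$ by \dots the almost quaternionic submanifold structure of $S$'') rests on this false premise, that step collapses.

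Moreover, the step you yourself flag as the delicate one --- why the connection can be \emph{chosen} to respect quaternionic pairs and to transport quaternionically compatible lifts --- is exactly where the paper does its real work, and you leave it unresolved. The paper's argument is: with $J$ the section of $Q|_N$ from case (2), decompose $TM|_N$ equivariantly as a complex $T_N$-vector bundle into line bundles $L_i$ (where $T_N$ is the principal isotropy of $N$), choosing $L_1=TN$ and $L_{2k}=\tilde Q\cdot L_{2k-1}$ with $\tilde Q=\{I\in Q|_N\mid JI=-IJ\}$; then each $L_{2k-1}\oplus L_{2k}$ is a $Q$-invariant subbundle of $TM|_N$, and the connection defined by following the line bundles along $N$ therefore respects quaternionic pairs. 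Without this (or an equivalent construction), the appeal to ``parallel transport intertwining the isotropy representations'' does not establish the pairing condition or the sign compatibilities required in Definition \ref{defn:quatstrgraph}(2).
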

\begin{proof}
For a vertex $v$, we let $\lambda(v)$ be the weight of $Q_v$ as provided by Proposition \ref{prop:quatrep}. This proposition also gives us the grouping of the edges into quaternionic pairs such that condition (1) is satisfied.

For an edge $e$ we may apply Proposition \ref{prop:aqmfixedsubmfd} to the corresponding $2$-sphere $N$. As it is $2$-dimensional, it cannot be an almost quaternionic submanifold, so we are necessarily in case (2) of that proposition, and find an (up to sign unique) section $J$ of $Q$ along the $2$-sphere, which defines an invariant almost complex structure on $TM|_N$, and in particular on $N$. The standard argument for the existence of a connection, see \cite[Proposition 2.3 and Remark 2.6]{GW}, works as follows: we define $T_N$ to be the principal isotropy group of the $T$-action on $N$, and decompose $TM|_N$, considered as a complex $T_N$-vector bundle, equivariantly into $T_N$-invariant complex line bundles $L_i$. In the fibers over the two $T$-fixed points in $N$, every line bundle corresponds uniquely to an edge of the GKM graph. Then the connection $\nabla$ is defined by following the respective line bundle along $N$. 

In our setting, we may choose $L_1 = TN$ and $L_{2n} = \tilde{Q}\cdot L_{2n-1}$ for all $n$, where $\tilde{Q}=\{I\in Q|_N\mid JI=-IJ\}$, so that $L_{2n-1}\oplus L_{2n}$ is a $Q$-invariant subbundle of $TM|_N$, which implies that we may choose the connection $\nabla$ to respect quaternionic pairs.
\end{proof}

\begin{remark}
A different point of view on the quaternionic structure on the GKM graph of a GKM action by automorphisms on an almost quaternionic manifold is given by the induced action on the twistor space. Consider the twistor space $Z\to M$ over $(M,Q)$ with its natural almost complex structure $J^Q$ (see Section \ref{sec:quaternionicgeometries}), together with its induced $T$-action. We observe that this action preserves $J^Q$: if $\nabla$ is an Oproiu connection, $\tilde{\nabla}:= dt\circ \nabla \circ dt^{-1}$ is again an Oproiu connection for every $t\in T$. Indeed the torsion of $\tilde{\nabla}$ is
\[
 T^{\tilde{\nabla}}(X,Y)=dt (T^Q(dt^{-1}X,dt^{-1}Y)),
\]
which, looking at the definition \eqref{oproiutorsion} of $T^Q$, gives again the same expression as in \eqref{oproiutorsion} but with respect to a different orthonormal base of $Q$. Then, since the definition of $T^Q$ is independent of the choice of local base of $Q$, we have that $T^{\tilde{\nabla}}= T^Q$.

Observe moreover that the induced action on $Z$ is again of GKM type (but it can never be of type GKM$_3$, as the weights $\alpha, \lambda-\alpha$ and $\lambda$ are always linearly dependent). Its GKM graph is naturally a GKM fiber bundle (see \cite{GSZ}) over the GKM graph $\Gamma$ of $M$. The fiber over any vertex $p$ of $\Gamma$ is a line whose label is given by the quaternionic weight of $p$. 

For instance, the twistor space of the quaternion-K\"ahler manifold $S^4=\HH P^1$ is $\CC P^3$, see \cite[Section 5.2]{Salamon}. Considering the standard $T^2$-action on $\HH P^1$, as well as the induced action on the twistor space, the associated GKM fibration is depicted in Figure \ref{fig:twistorS4}.

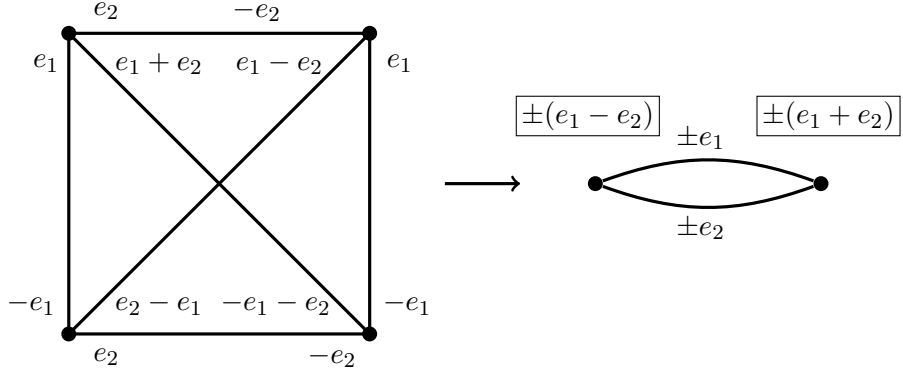
\begin{figure}[h]
		\begin{tikzpicture}

			\draw [very thick] (0,0) -- (4,0) -- (4,-4) -- (0,-4) -- (0,0) -- (4,-4);
			\draw [very thick] (4,0) -- (0,-4);	
			\node at (0,0)[circle,fill,inner sep=2pt] {};
			\node at (4,-4)[circle,fill,inner sep=2pt]{};
			\node at (0,-4)[circle,fill,inner sep=2pt]{};
			\node at (4,0)[circle,fill,inner sep=2pt]{};
			\node at (.5,.3) {$e_2$};
			\node at (2.5,.3) {$-e_2$};			
			\node at (-.3,-.4) {$e_1$};

			\node at (-.5,-3.6) {$-e_1$};

			\node at (.5,-4.3) {$e_2$};
			
			\node at (3.5,-4.3) {$-e_2$};

			\node at (4.5,-3.6) {$-e_1$};
				
			\node at (4.4,-.4) {$e_1$};
			\node at (1.2,-.4) {$e_1+e_2$};
			\node at (2.8,-.4) {$e_1-e_2$};
			\node at (1.2,-3.6) {$e_2-e_1$};
			\node at (2.75,-3.6) {$-e_1-e_2$};
\draw[->, very thick] (5,-2) -- ++(1,0);

\node (a) at (7,-2)[circle,fill,inner sep=2pt] {};
			\node (b) at (10,-2)[circle,fill,inner sep=2pt]{};
			\node at (8.4,-1.4) {$\pm e_1$};
			\node at (8.4,-2.6) {$\pm e_2$};

			\node at (6.9,-1.1)[rectangle, draw, inner sep =2pt]{$\pm (e_1-e_2)$};
			\node at (10.1,-1.1)[rectangle, draw, inner sep =2pt]{$\pm (e_1+e_2)$};
			\draw [very thick](a) to[in=160, out=20] (b);
			\draw [very thick](a) to[in=200, out=-20] (b);
		\end{tikzpicture}
		\caption{The GKM twistor fibration of $\HH P^1$}\label{fig:twistorS4}
		\end{figure}

\end{remark}

\begin{example}\label{ex:4-mfdsigned}
Consider any isometric GKM $T^2$-action on a connected, compact, oriented Riemannian $4$-manifold $M$ with vanishing odd rational cohomology. By Example \ref{ex:4diminvalmostquat}, this action admits an invariant almost quaternionic structure $Q_+$, so that its GKM graph admits a quaternionic structure.

Further, in case $M$ admits an invariant almost complex structure $J$, the GKM graph admits a signed structure. If $J$ is compatible with the Riemannian metric and orientation, we argued in Example \ref{ex:4diminvalmostquat} that $J$ is automatically a section of $Q_+$. In this case, the quaternionic weight at a vertex $v$ is just given by the sum of the weights of the edges at this vertex. 

For instance, see  Figure \ref{fig:actionsonKaehlerCP2} for the GKM graph of a standard $T^2$-action on the K\"ahler manifold $\CC P^2$. 
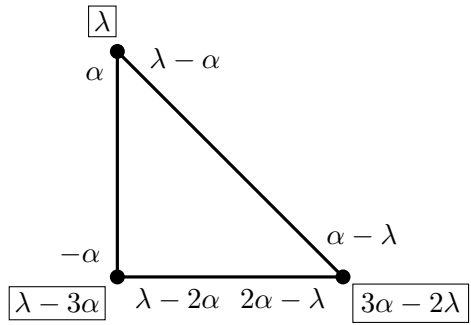
\begin{figure}[h]
		\begin{tikzpicture}

			\draw [very thick] (0,0) -- (3,-3) -- (0,-3) -- (0,0);	
			\node at (0,0)[circle,fill,inner sep=2pt] {};
			\node at (3,-3)[circle,fill,inner sep=2pt]{};
			\node at (0,-3)[circle,fill,inner sep=2pt]{};
			\node at (.9,-.1) {$\lambda-\alpha$};
			\node at (-.2,.4)[rectangle, draw, inner sep =2pt]{$\lambda$};
			\node at (-.3,-.3) {$\alpha$};

			\node at (-.5,-2.7) {$-\alpha$};
			\node at (-.8,-3.35) [rectangle, draw, inner sep=2pt]{$\lambda-3\alpha$};
			\node at (.8,-3.3) {$\lambda-2\alpha$};
			
			\node at (2.2,-3.3) {$2\alpha-\lambda$};
			\node at (3.9,-3.35) [rectangle, draw, inner sep = 3pt]{$3\alpha-2\lambda$};
			\node at (3.25,-2.45) {$\alpha-\lambda$};

		\end{tikzpicture}
		\caption{Quaternionic structure on the GKM graph of the K\"ahler manifold $\CC P^2$}\label{fig:actionsonKaehlerCP2}
		\end{figure}
		
Note that the quaternionic structure on the GKM graph of the standard $T^2$-action on the quaternion-K\"ahler manifold $\CC P^2$ looks different, see Proposition \ref{prop:actionsonG2Cn} and Figure \ref{fig:actionsonCP2withsign} below. Indeed, the Riemannian metrics of both structures coincide, but the orientation induced by the quaternion-K\"ahler structure on $\CC P^2$ is the opposite of the orientation induced by the K\"ahler structure. In the notation of Example \ref{ex:4diminvalmostquat}, if we fix the orientation compatible with the quaternion-K\"ahler structure, the complex structure $J$ gives a section of $Q_-$.
\end{example}

\begin{remark}
Note that there is no reason to expect the GKM graph of a GKM action on a general almost quaternionic manifold to admit a signed structure. For instance, compact positive quaternion-K\"ahler manifolds other than the Grassmannian $\Gr_2(\CC^{n+2})$ do not admit an almost complex structure, see \cite[Theorem 1.1]{GMS}. %most real grassmannians as well as $\mathbb{H}P^n$ do not admit any complex structure, see \cite{Massey}, \cite{Sankaran}, \cite{Tang}, and also .} \marginpar{cite also Hirzebruch for $HP^n$, $n>1$, and Ehresmann and Hopf for $HP^1$.?} 
\end{remark}

\begin{example}\label{ex:quatbiangle}
Every abstract GKM graph of the form of a biangle admits a quaternionic structure. All possibilities, including quaternionically compatible lifts, up to reversing all signs at a vertex, are shown in Figure \ref{fig:quatbiangles}. 
\begin{figure}[h]
	\begin{center}
		\begin{tikzpicture}

			\node (a) at (0,0)[circle,fill,inner sep=2pt] {};
			\node (b) at (3,0)[circle,fill,inner sep=2pt]{};
			\node at (.1,.4) {$\lambda-\alpha$};
			\node at (-.6,0)[rectangle, draw, inner sep =2pt]{$\lambda$};
			\node at (.3,-.4) {$\alpha$};
			\node at (3.8,0)[rectangle, draw, inner sep = 2pt]{$\lambda-2\alpha$};
			\node at (2.7,.4){$\lambda-\alpha$};
			\node at (2.6,-.4){$-\alpha$};

			\draw [very thick](a) to[in=160, out=20] (b);
			\draw [very thick](a) to[in=200, out=-20] (b);

		\end{tikzpicture}
	\end{center}
	\caption{Quaternionically compatible sign choices on a biangle}\label{fig:quatbiangles}
	\end{figure}
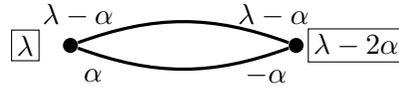 
	
	Indeed, choosing $\alpha$ and $\lambda$ in such a way that the three labels on the left in Figure \ref{fig:quatbiangles} are as depicted, and fixing $-\alpha$ as a label on the lower right, the label $\alpha-\lambda$ on the upper right would not give a quaternionic structure.
\end{example}

\begin{example}\label{ex:typesoftriangles} Every abstract GKM graph of the form of a triangle admits a quaternionic structure. There are two conceptually different types of labelings: those coming from an action on the Kähler manifold $\CC P^2$, and those that do not. Figure \ref{fig:actionsonKaehlerCP2} described the first, while the latter are given in Figure \ref{fig:actionsonCP2withsign}. Below in Section \ref{sec:thewolfspaces} we will observe that the latter describe actions on the quaternion-Kähler manifold $\CC P^2$. \begin{figure}[h]
		\begin{tikzpicture}

			\draw [very thick] (0,0) -- (3,-3) -- (0,-3) -- (0,0);	
			\node at (0,0)[circle,fill,inner sep=2pt] {};
			\node at (3,-3)[circle,fill,inner sep=2pt]{};
			\node at (0,-3)[circle,fill,inner sep=2pt]{};
			\node at (.9,-.1) {$\lambda-\alpha$};
			\node at (-.2,.4)[rectangle, draw, inner sep =2pt]{$\lambda$};
			\node at (-.3,-.3) {$\alpha$};

			\node at (-.5,-2.7) {$-\alpha$};
			\node at (-.6,-3.4) [rectangle, draw, inner sep=2pt]{$\lambda-\alpha$};
			\node at (.4,-3.3) {$\lambda$};
			
			\node at (2.6,-3.3) {$\lambda$};
			\node at (3.3,-3.35) [rectangle, draw, inner sep = 3pt]{$\alpha$};
			\node at (3.25,-2.45) {$\alpha-\lambda$};

		\end{tikzpicture}
		\caption{Quaternionically compatible sign choices on a noncomplex triangle}\label{fig:actionsonCP2withsign}
		\end{figure}
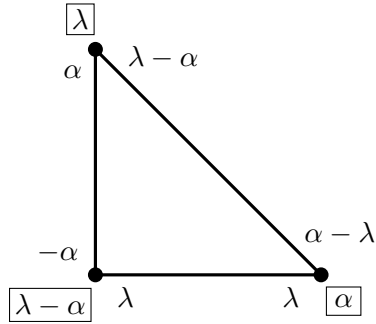

These two figures describe all possibilities, including quaternionically compatible lifts, up to reversing all signs at some vertices. To see this, let $\alpha$ and $\lambda$ be such that the labeling at the upper vertex is given as in Figure \ref{fig:actionsonKaehlerCP2} respectively \ref{fig:actionsonCP2withsign}. Transporting the diagonal edge along the edge labelled $\pm \alpha$ shows that the bottom edge is labelled $\pm(\lambda + c\alpha)$ for some integer $c$. Transporting the bottom edge along the diagonal edge shows that $c=0$ or $c=-2$, corresponding to Figure \ref{fig:actionsonCP2withsign} respectively \ref{fig:actionsonKaehlerCP2}.

We will call an abstract GKM graph as in Figure \ref{fig:actionsonKaehlerCP2} a \emph{complex triangle}, and one as  in Figure \ref{fig:actionsonCP2withsign} a \emph{noncomplex triangle}.
\end{example}

\begin{remark}
If an abstract GKM graph with quaternionic structure is at least $4$-valent, then it can never be of type GKM$_4$. In fact, in this case at any vertex we have at least two pairs of edges with labels $\alpha,\lambda-\alpha$ and $\beta, \lambda-\beta$. These four labels are linearly dependent.
\end{remark}

\begin{remark}\label{rem:signcompatibilitiesintheunsignedgraph}
Consider an abstract GKM$_3$ graph $\Gamma$ with a quaternionic structure, and fix a vertex $v$. We claim that the quaternionic pairs of edges and the quaternionically compatible lifts are uniquely determined by the unsigned weights $\alpha(e)$ and $\lambda(v)$ alone. More precisely: fix any lift $\tilde{\lambda}(v)$ of $\lambda(v)$. Now consider any edge $e$ at $v$, with label $ \alpha(e)$, lifted arbitrarily to $\tilde{\alpha}(e)$. There are two possibilities for the label of an edge $f$ at $v$ that forms a quaternionic pair with $e$: it may have label $\pm(\tilde{\lambda}(v)-\tilde{\alpha}(e))$, because $\tilde{\alpha}(e) + (\tilde{\lambda}(v)-\tilde{\alpha}(e))=\tilde{\lambda}(v)$, or $\pm (\tilde{\lambda}(v) + \tilde{\alpha}(e))$, because $-\tilde{\alpha}(e) + (\tilde{\lambda}(v) + \tilde{\alpha}(e)) = \tilde{\lambda}(v)$. But by the GKM$_3$ condition, there cannot exist edges with labels $\pm \tilde{\alpha}(e)$, $\pm (\tilde{\lambda}(v)-\tilde{\alpha}(e))$ and $\pm (\tilde{\lambda}(v) + \tilde{\alpha}(e))$ at the same time, so it is clear from the unsigned weights which case occurs. Note that the outcome would be the same if we had chosen the other lifts of $\lambda(v)$ and $\alpha(e)$.
\end{remark}

Consider an abstract GKM graph with a quaternionic structure. If we assume that it is of type GKM$_3$, then any two edges at a vertex $v$ define a unique $2$-face, as explained in Section \ref{sec:gkmactions}. As the connection preserves quaternionic pairs of edges, either in every vertex of a $2$-face the two adjacent edges form a quaternionic pair or in none. In the first case we speak about a \emph{quaternionic $2$-face}. The next proposition justifies why we call the other $2$-faces \emph{complex $2$-faces}.

\begin{proposition}\label{prop:2facescomplex}
Let $\Gamma$ be an abstract GKM$_3$ graph with a quaternionic structure. Then any complex $2$-face admits a signed structure which is compatible with the quaternionic structure on $\Gamma$ in the sense that at every vertex of the $2$-face, the signed weights at the adjacent edges are quaternionically compatible.
\end{proposition}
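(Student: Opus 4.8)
The plan is to transport a single lift of the quaternionic weight once around the complex $2$-face and to read off the edge signs along the way. Orient the (cyclic) $2$-face as $v_1\to v_2\to\cdots\to v_m\to v_1$ and let $e_i\in E_{v_i}$ denote the edge from $v_i$ to $v_{i+1}$, indices taken modulo $m$; thus the two $2$-face edges at $v_i$ are $e_i$ and $\bar{e}_{i-1}$. I would begin by fixing an arbitrary lift $\tilde{\lambda}(v_1)$ of $\lambda(v_1)$. By Remark \ref{rem:signcompatibilitiesintheunsignedgraph} this determines, for every edge at $v_1$, its unique quaternionically compatible lift, in particular $\tilde{\alpha}(e_1)$. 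Condition (2) of Definition \ref{defn:quatstrgraph} then yields a lift $\tilde{\lambda}(v_2)$ with $\tilde{\lambda}(v_2)-\tilde{\lambda}(v_1)\equiv 0\mod\alpha(e_1)$ that is quaternionically compatible with $-\tilde{\alpha}(e_1)$; I set $\tilde{\alpha}(\bar{e}_1):=-\tilde{\alpha}(e_1)$ and take $\tilde{\alpha}(e_2)$ to be the lift compatible with $\tilde{\lambda}(v_2)$. Iterating around the cycle produces lifts $\tilde{\alpha}$ of all $2$-face weights with $\tilde{\alpha}(\bar{e}_i)=-\tilde{\alpha}(e_i)$.

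Granting for the moment that this procedure closes up consistently, the two required properties are immediate. First, at each vertex $v_i$ both adjacent lifts $\tilde{\alpha}(e_i)$ and $\tilde{\alpha}(\bar{e}_{i-1})=-\tilde{\alpha}(e_{i-1})$ are, by construction, quaternionically compatible with the single lift $\tilde{\lambda}(v_i)$, hence with each other; this is exactly the asserted compatibility with the quaternionic structure. Second, the connection condition for a signed structure follows from the second part of condition (2). Since the $2$-face is a connection path, $\nabla_{e_i}$ carries the two $2$-face edges at $v_i$ to those at $v_{i+1}$, and as $\nabla_{e_i}e_i=\bar{e}_i$ it must send $\bar{e}_{i-1}$ to $e_{i+1}$. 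Condition (2) says the lift of $\nabla_{e_i}\bar{e}_{i-1}$ congruent to $\tilde{\alpha}(\bar{e}_{i-1})\mod\alpha(e_i)$ is again compatible with $\tilde{\lambda}(v_{i+1})$, so by uniqueness it coincides with $\tilde{\alpha}(e_{i+1})$; hence $\tilde{\alpha}(e_{i+1})\in\tilde{\alpha}(\bar{e}_{i-1})+\ZZ\,\tilde{\alpha}(e_i)$, which together with the trivial relation for $e_i$ gives the compatibility of $\tilde{\alpha}$ with the connection $\nabla$ restricted to the $2$-face.

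The one genuinely nontrivial point---and the step I expect to be the main obstacle---is that the transport closes up: after going around once we obtain a lift $\tilde{\lambda}(v_1)'$ of $\lambda(v_1)$, and we must have $\tilde{\lambda}(v_1)'=\tilde{\lambda}(v_1)$, as otherwise the sign assigned to $e_1$ at the start and the sign forced by the compatibility of $\bar{e}_m$ at the end would clash. To control the monodromy I would pass to the two-dimensional subspace $A:=\{\xi\in\mft^*\mid \xi|_W=0\}$, where $W:=\ker\alpha(e_1)\cap\ker\alpha(e_m)$; recall that every $2$-face edge weight lies in $A$. Each transport step changes the lift of the quaternionic weight by an element of $\ZZ\,\alpha(e_i)\subset A$, so $\tilde{\lambda}(v_1)'-\tilde{\lambda}(v_1)\in A$. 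Since both are lifts of $\lambda(v_1)$ they differ at most by sign, and a sign flip would force $2\tilde{\lambda}(v_1)\in A$, i.e.\ $\lambda(v_1)\in A$.

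It remains to exclude $\lambda(v_1)\in A$, and here the complex hypothesis together with GKM$_3$ enters. Let $e_1'$ be the quaternionic partner of $e_1$; for compatible lifts $\tilde{\alpha}(e_1')=\tilde{\lambda}(v_1)-\tilde{\alpha}(e_1)$, so if $\lambda(v_1)\in A$ then $\alpha(e_1')$ vanishes on $W$ as well, i.e.\ $\alpha(e_1')\in A$. Then $\alpha(e_1)$, $\alpha(e_m)$ and $\alpha(e_1')$ would all lie in the two-dimensional space $A$ and hence be linearly dependent. But $e_1$, $\bar{e}_m$ and $e_1'$ are three distinct edges at $v_1$: they are distinct because a quaternionic pair consists of two different edges (so $e_1'\neq e_1$) and because the $2$-face is complex, so the adjacent $2$-face edges $e_1,\bar{e}_m$ do not form a quaternionic pair (so $e_1'\neq\bar{e}_m$). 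This contradicts the GKM$_3$ condition. Hence $\lambda(v_1)\notin A$, the monodromy is trivial, and the construction delivers the desired compatible signed structure on the complex $2$-face.
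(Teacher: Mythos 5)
Your proposal is correct and follows essentially the same route as the paper's proof: transport the quaternionic weight around the connection path to build the signed structure, and rule out the sign flip upon closing the loop by noting it would force $\lambda(v_1)$ into the span of the two face weights, contradicting GKM$_3$ via the three edges labelled $\alpha(e_1)$, $\alpha(e_m)$ and $\lambda-\alpha(e_1)$. Your formulation via the annihilator $A$ of $W$ is just a coordinate-free rephrasing of the paper's observation that all labels are combinations of $\alpha$ and $\beta$.
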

\begin{proof}
Choose any vertex $v$ of the complex $2$-face, and call the edges of the $2$-face $e_1,\ldots,e_n$, with $i(e_1)=t(e_n)=v$ and $t(e_j)=i(e_{j+1})$. We write $\alpha$ and $\beta$ for quaternionically compatible lifts of $\alpha(e_1)$ and $\alpha(e_n)$, and denote the compatibly lifted quaternionic weight at $v$ by $\lambda$. 

Then, by definition of a quaternionic structure, the lift of the quaternionic weight at $t(e_1)$ which is quaternionically compatible with $-\alpha$ is of the form $\lambda + c_1\alpha$. Let $\tilde{\alpha}(e_2)$ be the lift of $\alpha(e_2)$ quaternionically compatible  with these elements; then $\lambda + c_1\alpha + c_2\tilde{\alpha}(e_2)$ is a lift of the quaternionic weight at $t(e_2)$ which is quaternionically compatible with $-\tilde{\alpha}(e_2)$. Note that all the labels of edges of the $2$-face are linear combinations of $\alpha$ and $\beta$. We continue this process,  transporting the quaternionic weight around the connection path, at the same time introducing a signed structure on the $2$-face compatible with the quaternionic structure. Closing the connection path, in the last step we obtain a lift of the quaternionic weight at $v$, again of the form $\lambda + a\alpha + b\beta$, which is quaternionically compatible with $-\tilde{\alpha}(e_n)$. If $-\tilde{\alpha}(e_n) = \beta$, then we have obtaind a signed structure compatible with the quaternionic structure. But if $-\tilde{\alpha}(e_n) = -\beta$, then necessarily $-\lambda = \lambda + a\alpha + b\beta$, which implies that $\lambda$, $\alpha$ and $\beta$ are linearly dependent. But this is a contradiction to the GKM$_3$ condition, as from the vertex $v$ there emerge edges labelled $\alpha$, $\beta$ and $\lambda-\alpha$.
\end{proof}

\begin{remark}
It is possible that also the quaternionic $2$-faces admit a compatible signed structure, see Example \ref{ex:4-mfdsigned}, but in general they do not.
\end{remark}

\section{Torus actions on quaternion-K\"ahler manifolds}

In the positive quaternion-K\"ahler setting, we can refine Proposition \ref{prop:aqmfixedsubmfd}. Note that the assumption of simple connectivity is satisfied for positive quaternion-K\"ahler manifolds by \cite[Theorem 6.6]{Salamon}.
\begin{proposition}\label{prop:structureonfixedsubmfd}
Consider an action of a torus $T$ on a positive quaternion-K\"ahler manifold $M$ by quaternion-K\"ahler automorphisms, and $H\subset T$ a subtorus. Let $N$ be a connected component of $M^H$. Then $N$ is either
\begin{enumerate}
\item a quaternion-K\"ahler submanifold, or
\item a K\"ahler-Einstein submanifold, with $T$-invariant complex structure given by a section of $Q|_N$.
\end{enumerate}
\end{proposition}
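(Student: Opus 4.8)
The plan is to take the two cases of Proposition~\ref{prop:aqmfixedsubmfd} as a starting point and to upgrade the \emph{almost} structures found there to genuine (parallel) geometric structures, using the extra input that quaternion-K\"ahler automorphisms are isometries. Since positive quaternion-K\"ahler manifolds are simply connected by \cite[Theorem 6.6]{Salamon}, Proposition~\ref{prop:aqmfixedsubmfd} applies, and $N$ is either almost quaternionic (if $H$ acts trivially on $Q|_N$) or carries a $T$-invariant compatible almost complex structure $J$ given by a section of $Q|_N$ (if it does not). The key geometric observation is that $H$ acts by isometries, so that $N$, being a connected component of $M^H$, is a \emph{totally geodesic} submanifold; hence its Levi-Civita connection $\nabla^N$ is the restriction of $\nabla^M$ to fields tangent to $N$, and $R^N=R^M|_{TN}$.

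In the first case I would show that $Q|_N$ is $\nabla^N$-parallel. Writing $\nabla^M_X J_\alpha=\sum_\beta \omega_{\alpha\beta}(X)J_\beta$ for a local frame of $Q$ (possible since $M$ is quaternion-K\"ahler, so $\nabla^M$ preserves $Q$), and using that each $J_\alpha$ preserves $TN$ together with total geodesy, one obtains $\nabla^N_X J_\alpha=\sum_\beta \omega_{\alpha\beta}(X)J_\beta$ along $N$. Thus the holonomy of $N$ reduces to $\Sp(k)\Sp(1)$ with $4k=\dim N$; for $\dim N\geq 8$ this says exactly that $N$ is quaternion-K\"ahler, which is Einstein by \cite{Berger}, with positive scalar curvature inherited from $M$. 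For $\dim N=4$ the holonomy reduction is vacuous, and instead I would invoke the known fact that a totally geodesic quaternionic (i.e.\ $Q$-invariant) submanifold of a quaternion-K\"ahler manifold is self-dual Einstein, hence quaternion-K\"ahler in the four-dimensional sense.

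In the second case I would show that $J$ is $\nabla^N$-parallel. Since $Q$ is $\nabla^M$-parallel, $\nabla^M$ induces a metric connection on $Q|_N$. This connection is $H$-invariant, because $H$ preserves both the metric and $Q$; as $H$ acts trivially on the base $N$, the induced covariant derivatives commute with the $H$-action on the fibers, and therefore preserve the $H$-isotypic splitting $Q|_N=\langle J\rangle\oplus Q'$. Hence $\nabla_X J\in\langle J\rangle$, while $\nabla_X J\perp J$ because $J$ has constant unit norm; so $\nabla_X J=0$ along $N$, and total geodesy transports this to $\nabla^N_X J=0$. Thus $N$ is K\"ahler, with $T$-invariant complex structure given by the section $J$ of $Q|_N$.

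The hard part will be the Einstein property in the K\"ahler case: since $N$ is only totally geodesic, $\mathrm{Ric}^N$ is the partial trace of $R^M$ over $TN$ and not the restriction of the (Einstein) $\mathrm{Ric}^M$. My plan here is to use the curvature decomposition $R^M=c\,R_{\HH P^n}+R'$ of a quaternion-K\"ahler manifold, in which $R'$ is of hyperk\"ahler type and Ricci-flat. A direct computation---using that $J$ preserves $TN$ while the structures in $Q'$ interchange $TN$ and its normal bundle, so that $N$ is \emph{totally complex}---shows that the trace of the $\HH P^n$-model part over the normal bundle is a multiple of $g|_{TN}$. It then remains to control the contribution of $R'$: Ricci-flatness rewrites it as a trace over $TN$, and one must show that this is again proportional to $g|_{TN}$. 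I expect this $\Sp(n)$-invariant computation to be the main technical obstacle, and would either complete it using the totally complex condition or else appeal to the known fact that totally complex (Hermitian) submanifolds of quaternion-K\"ahler manifolds are K\"ahler-Einstein. Positivity of the scalar curvature of $N$ then follows from $M$ having positive scalar curvature.
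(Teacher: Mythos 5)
Your proposal is correct in its overall structure and follows the same skeleton as the paper's proof: both start from Proposition \ref{prop:aqmfixedsubmfd} (applicable since positive quaternion-K\"ahler manifolds are simply connected by \cite[Theorem 6.6]{Salamon}), and both exploit that $N$, as a fixed point component of an isometric action, is totally geodesic. The differences are in how the almost structures are upgraded. In case (1) with $\dim N>4$ the two arguments coincide; in dimension $4$ the paper argues via the restriction of the complex twistor bundle of $M$ to $N$ together with Alekseevsky's theorem that $N$ is Einstein and \cite[Theorem 13.46]{Besse}, where you instead cite the corresponding known fact directly --- acceptable, though the twistor argument is the more self-contained route. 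In case (2) your derivation of $\nabla J=0$ from the $H$-equivariance of the induced connection on $Q|_N$ (which forces $\nabla_XJ\in\langle J\rangle$, hence $\nabla_XJ=0$ by unit norm) is a genuinely different and arguably more elementary argument than the paper's, which invokes the dichotomy of \cite[Corollary 1.10]{AM} (almost Hermitian submanifolds are quaternion-K\"ahler or K\"ahler) and then excludes the quaternion-K\"ahler alternative via the nonexistence of compatible almost complex structures \cite[Theorem 3.9]{AMP1}; your route buys independence from that nonexistence theorem. Finally, you are right that the Einstein property of $N$ in the K\"ahler case is the delicate point: total geodesy only gives $\mathrm{Ric}^N=\mathrm{Ric}^M|_{TN}$ minus a normal trace of $R^M$, and one must show that this correction term is proportional to $g|_{TN}$ --- the paper's phrase ``clearly Einstein as it is a totally geodesic submanifold of $M$, which is Einstein'' is, taken literally, not a proof, and the actual content is the Alekseevsky--Marchiafava result on totally complex (Hermitian/K\"ahler) submanifolds of quaternion-K\"ahler manifolds. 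Your observation that $N$ is totally complex (elements of $Q'$ map $TN$ into its orthogonal complement, by averaging over $H$) is correct and is exactly what makes that result applicable, so closing the argument by citing \cite{AM} rather than carrying out the $\Sp(n)$-invariant curvature computation is legitimate and matches what the paper implicitly does.
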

\begin{proof}
 In case $H$ acts trivially on $Q|_N$, we showed in Proposition \ref{prop:aqmfixedsubmfd} that $N$ is an almost quaternionic submanifold. As $N$ is automatically totally geodesic, the almost quaternionic structure of $N$ is preserved by the Levi-Civita connection. Hence, in dimension bigger than $4$, $N$ is quaternion-Kähler by definition. In dimension $4$, $N$ is Einstein (since $M$ is Einstein, see \cite{Alekseevsky} or \cite[Theorem 3.1]{Salamon}) and the (complex) twistor bundle of $M$ restricts to the twistor bundle of $N$, which is again complex, and thus $N$ is a quaternion-K\"ahler manifold, see \cite[Theorem 13.46]{Besse}. 

In case $H$ acts non-trivially on $Q|_N$, we constructed a $T$-invariant almost complex structure on $N$ as a section of $Q|_N$. Therefore, $N$ is an almost Hermitian submanifold in the sense of \cite{AM} and hence, by \cite[Corollary 1.10]{AM} it is either a quaternion-Kähler or a Kähler submanifold. But by \cite[Theorem 3.9]{AMP1} (see also Footnote \ref{footnote1} for dimension $4$) the $T$-invariant almost complex structure cannot exist in the quaternion-Kähler case, so it is a K\"ahler submanifold of $M$. It is clearly Einstein as it is a  totally geodesic submanifold of $M$, which is Einstein. 
\end{proof}

\begin{remark} \label{rem:compatibilitycomplexstructures}
Consider the second case in Proposition \ref{prop:structureonfixedsubmfd}, i.e., a connected component $N$ of $M^H$ which is K\"ahler-Einstein. The complex structure is given by a section of $Q|_N$ that is pointwise fixed by $H$, and up to a global sign there is only one such section of $Q|_N$. On the other hand, assuming that the action is GKM, in any $T$-fixed point $p\in N$ the $T$-representation on $Q_p$ has a unique fixed line, as in case (2) of Proposition \ref{prop:quatrep}. Thus, we may choose the complex structure in $Q_p$ on $T_pM$ to extend the complex structure on $T_pN$ coming from the K\"ahler-Einstein structure. In the context of abstract GKM$_3$ graphs the corresponding statement was shown in Proposition \ref{prop:2facescomplex}.
\end{remark}

From now on, assume that the $T$-action on the positive quaternion-Kähler manifold $M$ is of type GKM$_3$, so that any two edges at a vertex $v$ of its GKM graph define a $2$-face, corresponding to a $4$-dimensional invariant submanifold. By Proposition \ref{prop:structureonfixedsubmfd}, this submanifold is either a quaternion-K\"ahler or a K\"ahler-Einstein submanifold, corresponding to the distinction between quaternionic and complex $2$-faces, see Section \ref{sec:torusonaqm}.

\begin{proposition}\label{prop:4dimsubmanifolds}
The four-dimensional submanifold corresponding to a $2$-dimensonal face is either 
\begin{enumerate}
\item a quaternion-K\"ahler submanifold. In this case it is either $\CC P^2$ or $\HH P^1$.
\item a toric K\"ahler-Einstein manifold. As a toric symplectic manifold, it is determined by its momentum image, which is, up to rescaling, one of the following three reflexive polytopes:
 \begin{center}
\begin{tikzpicture}

\foreach \x in {-9,...,4}
    \foreach \y in {1,...,6}
    {
    %\draw (\x/2,\y/2) circle (2pt);
      \node at (\x/2,\y/2)[circle,fill,inner sep=1pt]{};
    }

\draw[very thick] (-8/2,5/2) -- ++(3/2,-3/2) -- ++(-3/2,0) -- ++(0,3/2) ;

\draw[very thick] (-3/2,4/2) -- ++(2/2,0) -- ++(0,-2/2) -- ++(-2/2,0)  -- ++(0,2/2);

\draw[very thick] (1/2,4/2) -- ++ (1/2,0) -- ++(1/2,-1/2) -- ++ (0,-1/2) --++ (-1/2,0) --++ (-1/2,1/2) --++ (0,1/2);

\end{tikzpicture}
\end{center}
\end{enumerate}
\end{proposition}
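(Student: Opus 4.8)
The plan is to classify the $4$-dimensional $T$-invariant submanifold $N$ corresponding to a $2$-face according to the dichotomy already established in Proposition \ref{prop:structureonfixedsubmfd}: either $N$ is quaternion-K\"ahler or it is K\"ahler-Einstein. These two cases produce the two items of the statement, so I would treat them separately.

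For the quaternionic $2$-faces, $N$ is a $4$-dimensional positive quaternion-K\"ahler manifold in the sense of Section \ref{sec:quaternionicgeometries} (Einstein with positive scalar curvature and self-dual Weyl tensor). Such manifolds are classified: the only complete examples are $S^4=\HH P^1$ and $\CC P^2$ (with the Fubini--Study metric and its nonstandard orientation), by the work of Hitchin and Friedrich--Kurke on compact self-dual Einstein manifolds with positive scalar curvature. I would invoke this classification to conclude item (1). One should double-check that the $T$-action on $N$ is automatically of GKM type so that the graph-theoretic picture (a biangle for $\HH P^1$, a triangle for $\CC P^2$) matches; this follows because $N\subset M$ is $T$-invariant with isolated fixed points and vanishing odd cohomology.

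For the complex $2$-faces, $N$ is a $4$-dimensional K\"ahler-Einstein manifold carrying an effective $T^2$-action preserving the K\"ahler structure, hence a \emph{toric} K\"ahler surface. By Delzant's theorem such a manifold is determined by its momentum polytope, which is a Delzant (smooth reflexive, after rescaling to reduce scalar curvature normalization) lattice polygon. The positivity of the scalar curvature together with the Einstein condition restricts $N$ to be a toric Fano surface admitting a K\"ahler-Einstein metric; the smooth toric del Pezzo surfaces are $\CC P^2$, $\CC P^1\times\CC P^1$, and the blow-ups of $\CC P^2$ in up to three points, but only $\CC P^2$, $\CC P^1\times\CC P^1$, and the blow-up of $\CC P^2$ in three points admit a K\"ahler-Einstein metric (the one- and two-point blow-ups are Fano but \emph{not} K\"ahler-Einstein, by Matsushima's obstruction from the non-reductivity of their automorphism groups). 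Their reflexive momentum polytopes are precisely the triangle, the square, and the hexagon drawn in the statement, which I would identify explicitly with the three reflexive polygons depicted.

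The main obstacle I anticipate is pinning down the correct normalization and lattice structure so that the momentum images are \emph{exactly} the three reflexive polytopes shown, rather than merely affinely equivalent toric surfaces. Concretely: one must argue that the K\"ahler-Einstein condition forces the polytope to be reflexive (the canonical class being a rational multiple of the symplectic class, so the dual polytope is again a lattice polytope), and then use the known enumeration of the three reflexive polygons admitting K\"ahler-Einstein metrics to rule out the remaining two (the single and double blow-ups). Invoking the Matsushima obstruction, or equivalently the fact that a toric Fano surface is K\"ahler-Einstein iff its barycenter is the origin, is the cleanest way to perform this elimination, and I would route the argument through that criterion.
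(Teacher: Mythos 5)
Your proposal follows essentially the same route as the paper: the dichotomy from Proposition \ref{prop:structureonfixedsubmfd}, the Hitchin/Friedrich--Kurke classification for the quaternion-K\"ahler case, and in the K\"ahler--Einstein case the reduction to reflexive Delzant polygons followed by elimination of the one- and two-point blow-ups of $\CC P^2$. Two small differences: you rule out those blow-ups via the Matsushima obstruction (equivalently the barycenter criterion), whereas the paper cites Tian's classification of K\"ahler--Einstein surfaces with positive Einstein constant --- these are interchangeable for the nonexistence direction you need. The one point you assert without justification is the \emph{positivity} of the Einstein constant of $N$: Proposition \ref{prop:structureonfixedsubmfd} only gives that $N$ is K\"ahler--Einstein, with no sign, and the paper obtains positivity of the monotonicity constant (and hence reflexivity after rescaling) from the existence of a Hamiltonian circle action via \cite[Lemma 5.2]{1224}; you should either invoke that lemma or argue that a compact K\"ahler--Einstein surface with nonpositive Einstein constant admits no nontrivial holomorphic vector fields with zeros, contradicting the effective toric action with fixed points.
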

\begin{proof}
We showed in Proposition \ref{prop:structureonfixedsubmfd} that the four-dimensional submanifold is either quaternionic-K\"ahler or toric K\"ahler-Einstein. The first statement then follows directly from the classification of four-dimensional quaternion-K\"ahler manifolds due to \cite{Hitchin} or \cite{FriedrichKurke}.

In the other case it is a toric K\"ahler-Einstein $4$-fold. On K\"ahler manifolds the Ricci form represents the first Chern class, which implies that the first Chern class of a K\"ahler-Einstein manifold is a multiple of the class of the symplectic form. By definition, this means that the manifold is monotone symplectic. By \cite[Lemma 5.2]{1224} this multiple is positive if there is a Hamiltonian circle action, so we may rescale the symplectic form such that the first Chern class equals $[\omega]$. Then the momentum polytope is reflexive by \cite[Proposition 3.10]{1224}.

There are only 16 reflexive polytopes in dimension $2$, see \cite[Figure 2]{PoonenRodriguez}. Inspecting these pictures, one sees that only five of those satisfy the Delzant condition (cf., e.g., \cite[Section 3.2]{ChartonSabatiniSepe}). These are the three depicted in the statement of the proposition, as well as the following two:

 \begin{center}
\begin{tikzpicture}

\foreach \x in {-9,...,0}
    \foreach \y in {1,...,5}
    {
    %\draw (\x/2,\y/2) circle (2pt);
      \node at (\x/2,\y/2)[circle,fill,inner sep=1pt]{};
    }

\draw[very thick] (-7/2,4/2) -- ++(1/2,0) -- ++(0,-2/2) -- ++(-2/2,0) --++ (0,1/2) --++ (1/2,1/2) ;

\draw[very thick] (-4/2,4/2) -- ++(1/2,0) -- ++(2/2,-2/2) -- ++(-3/2,0)  -- ++(0,2/2);

\end{tikzpicture}
\end{center}
These latter two correspond  to $\CC P^2$, blown up at $1$ or $2$ fixed points. However, it was shown in \cite{Tian} that a compact complex surface can only admit a K\"ahler-Einstein metric with positive Einstein constant if it is $\CC P^2$, $\CC P^1\times \CC P^1$, or $\CC P^2$ blown up at $3$ to $8$ points in general position. Therefore, these two cases are impossible.
\end{proof}

This proposition shows that the quaternionic $2$-faces of the GKM graph $\Gamma$ of a positive quaternion-K\"ahler GKM$_3$ manifold are biangles or (noncomplex) triangles, and the complex $2$-faces are triangles, squares or $6$-gons. Now we will show that complex $6$-gons do not exist:

\begin{lemma}\label{lem:no6gon}
There do not exist complex $6$-gons in $\Gamma$.
\end{lemma}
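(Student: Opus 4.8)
The plan is to derive a contradiction from the combinatorial data that a complex $6$-gon would impose. By Proposition \ref{prop:2facescomplex}, a complex $6$-gon admits a signed structure compatible with the quaternionic structure, so I may work with honest lifts $\tilde{\alpha}(e_i)\in\ZZ^m$ rather than classes modulo $\pm 1$. Fix a starting vertex $v=i(e_1)$ and quaternionically compatible lifts $\alpha$, $\beta$ of the two edges of the $6$-face at $v$, together with the compatibly lifted quaternionic weight $\lambda$ at $v$. Since the face is $2$-dimensional, every edge label around the hexagon is an integer linear combination of $\alpha$ and $\beta$; I would introduce a convenient basis, say writing the two edges at $v$ as $\alpha$ and $\beta$ and normalizing $\lambda$ accordingly (recall that at a complex vertex the two edges do \emph{not} form a quaternionic pair, so their labels $\alpha$, $\beta$ and the quaternionic weight $\lambda$ are genuinely independent data, constrained only by the signed-structure transport rules).

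The core of the argument is to transport both the edge labels and the quaternionic weight once around the hexagon using the connection, and then impose that the face closes up. First I would track the \emph{edge labels}: by the compatibility of the connection with the signed structure (Definition \ref{defn:signedstructure}(2)), each step changes the lift of the diagonal/next edge by an integer multiple of the label of the edge being crossed, exactly as in the transport argument used in Example \ref{ex:typesoftriangles} and in the proof of Proposition \ref{prop:2facescomplex}. This produces a sequence of labels $\tilde{\alpha}(e_1),\ldots,\tilde{\alpha}(e_6)$ whose possible values are tightly constrained by the GKM$_3$ condition at each intermediate vertex, since at a vertex with edges $e_{i-1}$, $e_i$ any third edge forced to be linearly dependent on these two is unique. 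Second, and in parallel, I would track the \emph{quaternionic weight} around the hexagon using condition (2) of Definition \ref{defn:quatstrgraph}: crossing each edge $e_i$ sends $\tilde\lambda$ to $\tilde\lambda + c_i\,\tilde{\alpha}(e_i)$ for some integer $c_i$, and the resulting class must agree with the one we started from after six steps.

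The key mechanism that closes the loop is the interplay between the two transports: at each complex vertex, the quaternionic weight together with the two edge labels must satisfy the configuration of a \emph{complex} (rather than quaternionic) $2$-face, which forces each local picture to be one of the very few normalized models classified in Example \ref{ex:typesoftriangles} and Figure \ref{fig:actionsonKaehlerCP2}. I expect that imposing these local models at all six vertices, with the labels linked by the connection, leaves only finitely many candidate sign/integer patterns for $(c_1,\ldots,c_6)$ and for the labels $\tilde{\alpha}(e_i)$. The closure condition around the hexagon then becomes a small integral linear system in $\alpha$, $\beta$, $\lambda$, and I would show every solution violates either the GKM$_3$ independence of three edges meeting at some vertex, or the requirement that the face be genuinely complex at all vertices (i.e.\ never degenerating into a quaternionic vertex, which by Proposition \ref{prop:4dimsubmanifolds} would change its type).

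The main obstacle I anticipate is bookkeeping: ensuring that the ``complex'' constraint is enforced at \emph{every} one of the six vertices simultaneously, rather than just at $v$, since a face is complex precisely when no vertex has its two face-edges forming a quaternionic pair. Concretely, the hard part is ruling out the possibility that the transported quaternionic weight and edge labels conspire to close up consistently over six steps in a way that never triggers a GKM$_3$ violation; the biangle, triangle and quadrangle cases close too quickly for such conspiracies, but with six free transport parameters one must verify that the extra length genuinely overdetermines the system. I would handle this by reducing to a normalized coordinate model (fixing $\alpha=(1,0)$, $\beta=(0,1)$ and $\lambda=(p,q)$) and checking that the monodromy of the weight transport, composed with the monodromy of the label transport, cannot equal the identity on the relevant quotient without producing three linearly dependent edges at some vertex.
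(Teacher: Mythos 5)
There is a genuine gap: the contradiction cannot be found inside the hexagon, yet your entire argument stays there. The standard hexagon labeling forced by Proposition \ref{prop:4dimsubmanifolds} (labels $\pm\alpha,\pm\beta,\pm(\alpha+\beta)$ arranged as for the toric Del Pezzo surface of degree $6$) closes up perfectly as a signed GKM graph, and the quaternionic weight can be transported around it consistently --- for instance with all of your transport integers $c_i=0$, giving quaternionic weight $\lambda$ at every vertex, quaternionic pairs $(-\beta,\lambda+\beta)$, $(\alpha+\beta,\lambda-\alpha-\beta)$, etc., and no GKM$_3$ violation among the data you are tracking. The closure condition only requires $\sum_i c_i\tilde{\alpha}(e_i)=0$, which has many solutions, so the ``conspiracies'' you worry about in your last paragraph really do occur: six transport parameters do not overdetermine the system, and the monodromy computation you outline terminates without producing a contradiction. (This is consistent with the fact that the lemma is not a statement about a single $2$-face in isolation; a hexagonal complex $2$-face is locally unobjectionable.)

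The missing idea is to leave the hexagon. The paper's proof takes the quaternionic \emph{partner} of a hexagon edge at $p_1$ --- the extra edge labelled $\lambda-\alpha$, which is not an edge of the hexagon --- and considers the complex $2$-face it spans with the hexagon edge labelled $\beta$. By Proposition \ref{prop:4dimsubmanifolds} (and Lemma \ref{lem:no6gon} not yet being available, one allows all three types) that face is again a standard triangle, square or hexagon, so its third edge $f$ at $p_2$ is labelled $\lambda-\alpha+c\beta$ with $c\in\{-1,0,1\}$. On the other hand, the quaternionic pairing at $p_2$, computed from Figure \ref{fig:quatbiangles} or Figure \ref{fig:actionsonCP2withsign} according to whether the $\beta$-edge sits in a quaternionic biangle or a quaternionic triangle, already produces two edges at $p_2$ whose labels lie in the plane spanned by $\beta$ and $\lambda-\alpha$, namely $-\beta$ and $\lambda-\alpha-3\beta$ (resp.\ $\lambda-\alpha-2\beta$). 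Since the label of $f$ also lies in that plane but $f$ can coincide with neither of these edges, this violates the GKM$_3$ condition at $p_2$. Without bringing in the partner edges and this second $2$-face, no amount of bookkeeping on the hexagon itself will yield the lemma.
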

\begin{proof}
Assume that there exists a complex $6$-gon in $\Gamma$. Recall from Remark \ref{rem:compatibilitycomplexstructures} (or from Proposition  \ref{prop:2facescomplex}) that it has a compatible signed structure. By Proposition \ref{prop:4dimsubmanifolds}, it has labels as in the following picture.
	\begin{center}
		\begin{tikzpicture}

			\draw [very thick] (0,0) -- (3,0) -- (6,-3) -- (6,-6) -- (3,-6) -- (0,-3) -- (0,0);
			
			\node at (0,0)[circle,fill,inner sep=2pt] {};
			\node at (3,0)[circle,fill,inner sep=2pt]{};
			\node at (6,-3)[circle,fill,inner sep=2pt]{};
			\node at (6,-6)[circle,fill,inner sep=2pt] {};
			\node at (3,-6)[circle,fill,inner sep=2pt]{};
			\node at (0,-3)[circle,fill,inner sep=2pt]{};

			\node at (.4,.4) {$\beta$};
			\node at (-.2,.4)[rectangle, draw, inner sep =2pt]{$\lambda$};
			\node at (-.35,-.55) {$\alpha$};

			\node at (-.5,-2.45) {$-\alpha$};
			\node at (-.2,-3.7) {$\alpha + \beta$};
			
			\node at (1.7,-5.7) {$-\alpha-\beta$};
			\node at (2.6,.4) {$-\beta$};
			\node at (4,-.2) {$\alpha + \beta$};
			\node at (6.3,-2.4) {$-\alpha - \beta$};
			\node at (6.4,-3.5) {$\alpha$};
			\node at (6.4,-5.5) {$-\alpha$};
			\node at (3.4,-6.4) {$\beta$};
			\node at (5.5,-6.4) {$-\beta$};

		\end{tikzpicture}
	\end{center}

	Note that this figure is symmetric in the sense that for any two edges $e$ and $f$ at a vertex, the label of the successive edge in the connection path through $e$ and $f$ is exactly the sum $\alpha(e) + \alpha(f)$.

We enumerate the vertices clockwise $p_1,\ldots,p_6$, starting in the upper left corner. At $p_1$ there exists a further edge with label $\lambda-\alpha$. Consider the connection path starting with this edge (in opposite orientation), following the horizontal edge labelled $\pm \beta$ in the picture above. This connection path is necessarily a complex $2$-face of the graph, and hence either a triangle, a square or a $6$-gon, with standard labels. 
Note that this complex $2$-face is a signed graph with labels $\lambda-\alpha$ and $\beta$ at $p_1$, with the same complex structures at $p_1$ and $p_2$ as in the picture above.

The next edge in this connection path emerging from $p_2$, say $f$, is hence labelled either $\lambda-\alpha-\beta$, $\lambda-\alpha$ or $\lambda-\alpha+\beta$, corresponding to a complex triangle, square or a $6$-gon.

We claim that all three possibilities contradict the GKM$_3$ condition at $p_2$. Namely, in case the edge labelled $\pm \beta$ is contained in a quaternionic biangle, the quaternionic weight at $p_2$ quaternionically compatible with $-\beta$ is $\lambda-2\beta$ by Figure \ref{fig:quatbiangles}, and thus at $p_2$ there emerge edges with labels $-\beta,\lambda-\beta, \alpha+\beta $ as well as $\lambda-\alpha-3\beta$. This shows that the label of $f$ is a linear combination of $\lambda-\alpha-3\beta$ and $-\beta$ with both coefficients nonzero (it cannot be $-\beta$ as the path cannot return along the same edge at $v_2$).

In case the edge labelled $\pm \beta$ was contained in a quaternionic triangle, the quaternionic weight at $p_2$ quaternionically compatible with $-\beta$ is $\lambda-\beta$ by Figure \ref{fig:actionsonCP2withsign}, hence at $p_2$ there are edges labelled $-\beta, \lambda, \alpha+\beta$ and $\lambda-\alpha-2\beta$. Again, the label of $f$ is a linear combination of $\lambda-\alpha-2\beta$ and $-\beta$ with both coefficients nonzero.
\end{proof}

\section{The Wolf spaces $\HH P^n$ and $\Gr_2(\CC^n)$}\label{sec:thewolfspaces}

Recall that a Wolf space is a quaternion-K\"ahler manifold which at the same time is a Riemannian symmetric space. It was shown in \cite[Section 5]{Wolf} that they are exactly the homogeneous spaces $M=G/H$, where $G$ is a compact simple Lie group, and the isotropy group $H$ is obtained in the following way: after fixing an ordering of the roots of $G$, we have $H=K\cdot {\Sp(1)}$, where $\Sp(1)\cong \SU(2)$ is associated to the highest root, and $K$ the centralizer of $\Sp(1)$ in $G$. The $G$-equivariant subbundle $Q\subset \End(TM)$ is, in the origin $eH$, given by $Q_{eH} = \{\ad_X\mid X\in {\mathfrak{sp}}(1)\}$. As every Wolf space is a homogeneous space of compact Lie groups of equal rank, by \cite{GHZ} the action of a maximal torus $T$ in the isotropy group by left multiplication is of GKM type. Note that the weight of the induced $T$-representation on $Q$ at the origin is, up to sign, given by the highest root.

We will focus on the two Wolf spaces $\HH P^n = \Sp(n+1)/\Sp(1)\times \Sp(n)$ and $\Gr_2(\CC^n)=\Un(n)/\Un(2)\times \Un(n-2)$, as for these spaces the isotropy action is even GKM$_3$.

\subsection{$\HH P^n$}

Consider the Lie group $\Sp(n+1)$ with standard diagonal maximal torus

\[
 T=\left\{\left.\begin{pmatrix}
    e^{i\varphi_0} & & \\
    & \ddots & \\
    & & e^{i\varphi_{n}}
  \end{pmatrix}\right| \varphi_0,\ldots,\varphi_n\in \RR\right\}.
\] Its root system is $\{\pm e_i\pm e_j\mid i,j=0,\ldots,n, i\neq j\}\cup \{\pm 2e_i\mid i=0,\ldots,n\}$. Choosing $2e_0$ as the highest root, we obtain the block-diagonally embedded isotropy $H = \Sp(1)\times \Sp(n)$ of the Wolf space $\HH P^n=\Sp(n+1)/\Sp(1)\times \Sp(n)$.

Observe that the fixed points of the $T$-action on $\HH P^n$ by left multiplication are 
\[
 v_0=[1:0:\ldots:0], v_1=[0:1:0:\ldots:0], \dots v_n=[0:\ldots :0:1];
\]
the point $v_0$ is our chosen origin for the description of ${\mathbb{H}} P^n$ as a homogeneous space. The $1$-skeleton of the action is a union of $\mathbb{H}P^1$'s, given by those points for which at most two of the homogeneous coordinates do not vanish. 

Its GKM graph is a complete graph on the $n+1$ vertices $v_i$, with any two edges connected by a biangle, corresponding to one of the $\mathbb{H}P^1\subset \mathbb{H}P^n$. The weights of the isotropy representation at $v_l$ corresponding to the edges connecting $v_l$ with $v_m$ are given by
\[
\alpha_{lm} =\pm(e_l-e_m), \;\alpha'_{lm}= \pm( e_l+e_m), 
\]
for every $m\in\{0,\dots,n\}, m\neq l$ (see for example \cite[Section 4]{GW}):
\begin{center}
		\begin{tikzpicture}

			\node (a) at (0,0)[circle,fill,inner sep=2pt] {};
			\node (b) at (3,0)[circle,fill,inner sep=2pt]{};
			\node at (1.4,.6) {$\pm(e_l-e_m)$};
			\node at (1.4,-.6) {$\pm(e_l+e_m)$};
			\node at (-.4,0){$v_l$};
			
			\node at (3.4,0){$v_m$};

			\draw [very thick](a) to[in=160, out=20] (b);
			\draw [very thick](a) to[in=200, out=-20] (b);
		\end{tikzpicture}
	\end{center}
In particular, the action is of type GKM$_3$. The quaternionic weight at $v_0$ is $\pm 2e_0$, equal to the highest root. As the action is GKM$_3$, by Remark \ref{rem:signcompatibilitiesintheunsignedgraph} the unsigned weights determine the quaternionic pairs and quaternionically compatible lifts. Explicitly, the edges forming a biangle are quaternionic pairs; choosing the sign $2e_0$ for the quaternionic weight at $v_0$, the quaternionically compatible lifts are $e_0-e_m$ and $e_0+e_m$, $m=1,\ldots,n$.

By Example \ref{ex:quatbiangle}, see Figure \ref{fig:quatbiangles}, the quaternionic weight at $v_m$ is $\pm 2e_m$; the quaternionic pairs and quaternionically compatible lifts are analogous to those at $v_0$.

In the following proposition, we generalize this example slightly. For the proof of the main result, it will be convenient to rewrite the labels in terms of the weights at the origin. Note that the center ${\mathbb{Z}}_2 = \{\pm 1\}$ of $\Sp(n+1)$ acts trivially on $\HH P^n$.

\begin{proposition}\label{prop:labelsHPn}
Consider the quaternion-K\"ahler manifold $\HH P^n = \Sp(n+1)/\Sp(1)\times \Sp(n)$, with a GKM action of a torus $T$ given by a homomorphism $T\to \Sp(n+1)/\ZZ_2$ into the image of the standard diagonal torus. Let $v_0,\ldots,v_n$ be the $n+1$ fixed points of the action as before, and denote the weight of the induced $T$-representation on $Q_{v_0}$ by $\pm\lambda$. Then we find elements $\alpha_1,\ldots,\alpha_{n}\in \mft^*$ in the integer lattice such that the weights of the edges are given as follows:
\begin{enumerate}
\item For any $k=1,\ldots,n$ there is a biangle between $v_0$ and $v_k$ with labels $\pm \alpha_k$ and $\pm (\lambda-\alpha_k)$.
\item For any $k,l=1,\ldots,n$, $k\neq l$, there is a biangle between $v_k$ and $v_l$ with labels $\pm (\alpha_k-\alpha_l)$ and $\pm (\lambda-\alpha_k-\alpha_l)$.
\end{enumerate}
Moreover, the weights of the induced representations on $Q$ are given as follows:
\begin{enumerate}
\item $\pm \lambda$ at $v_0$
\item $\pm (\lambda-2\alpha_k)$ at $v_k$, for $k=1,\ldots,n$.
\end{enumerate}
Conversely, given characters $\lambda, \alpha_1,\ldots,\alpha_n$ of $T$ (whose derivatives in $\mft^*$ we denote by the same letters) such that the graph $\Gamma$ with edges and labels as above satisfies that for every vertex $v$, the weights of any two edges at $v$ are linearly independent, then there exists a homomorphism of $T$ into the $\ZZ_2$-quotient of the diagonal torus in $\Sp(n+1)$ such that the induced action on $\HH P^n$ is GKM and has GKM graph $\Gamma$.
\end{proposition}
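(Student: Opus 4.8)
The plan is to split the proof into two directions, mirroring the statement. For the forward direction, I would start from the explicit description of the GKM graph of the full $\Sp(n+1)/\ZZ_2$-action already recorded above, where the edge between $v_l$ and $v_m$ carries the unsigned weights $\pm(e_l-e_m)$ and $\pm(e_l+e_m)$, and the quaternionic weight at $v_l$ is $\pm 2e_l$. Given a homomorphism $\phi\colon T\to \Sp(n+1)/\ZZ_2$ into the image of the diagonal torus, the weights of the $T$-action are simply the pullbacks under $\phi$ of these $e_l\pm e_m$. The key reduction is to express everything relative to the origin $v_0$: I would set $\lambda := \phi^*(2e_0)$ (matching the hypothesis that $\pm\lambda$ is the weight on $Q_{v_0}$) and, for each $k=1,\ldots,n$, define $\alpha_k := \phi^*(e_0 - e_k)$. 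Then $\phi^*(e_0+e_k) = \phi^*(2e_0) - \phi^*(e_0-e_k) = \lambda - \alpha_k$, which gives the labels of the biangle between $v_0$ and $v_k$ in item (1). For item (2), I compute $\phi^*(e_k - e_l) = \phi^*((e_0-e_l) - (e_0-e_k)) = \alpha_k - \alpha_l$ (note the sign works out up to the $\pm$ ambiguity) and $\phi^*(e_k + e_l) = \phi^*(2e_0 - (e_0-e_k) - (e_0-e_l)) = \lambda - \alpha_k - \alpha_l$, exactly as claimed. The quaternionic weight at $v_k$ is $\phi^*(2e_k) = \phi^*(2e_0 - 2(e_0-e_k)) = \lambda - 2\alpha_k$, giving the list of $Q$-weights. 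Since all of these are pullbacks of integral weights, the $\alpha_k$ and $\lambda$ lie in the integer lattice.

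For the converse, the main point is to reverse this computation. Given characters $\lambda, \alpha_1,\ldots,\alpha_n$ of $T$, I would define a homomorphism $\psi\colon T\to \Sp(n+1)$ — or rather into the $\ZZ_2$-quotient of the diagonal torus — by specifying the composite characters $T\to S^1$ in each diagonal slot. Concretely, writing the diagonal torus characters as $e_0,\ldots,e_n$, I want $\psi^*(e_0) = \tfrac12\lambda$ and $\psi^*(e_k) = \tfrac12\lambda - \alpha_k$; equivalently $\psi^*(e_0 - e_k) = \alpha_k$ and $\psi^*(2e_0) = \lambda$. The apparent obstruction is that $\tfrac12\lambda$ need not be an integral character, so $\psi$ might not land in the diagonal torus of $\Sp(n+1)$ itself. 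This is precisely why the statement passes to the $\ZZ_2$-quotient: the center $\{\pm\id\}$ acts trivially on $\HH P^n$, so only the differences $e_i \pm e_j$ — all of which are integral combinations of $\lambda$ and the $\alpha_k$ by the formulas above — need to be well-defined characters of $T$. I would make this precise by checking that the map $T\to \Sp(n+1)/\ZZ_2$ is well-defined on the quotient, using that $e_0 - e_k = \alpha_k$, $e_k - e_l = \alpha_k - \alpha_l$, and $e_k + e_l = \lambda - \alpha_k - \alpha_l$ are genuine integral characters, which is guaranteed since $\lambda, \alpha_1,\ldots,\alpha_n$ are.

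Finally I would verify that the induced $T$-action on $\HH P^n$ is GKM with the prescribed graph. The fixed point set of the diagonal torus action is $\{v_0,\ldots,v_n\}$, and this persists for the subtorus $\psi(T)$ precisely under the linear-independence hypothesis: the GKM (two-independence) condition at each vertex, which we are assuming as part of the data, ensures that no pair of the edge-weights at a vertex becomes proportional, so the $T$-fixed points are exactly the $v_i$ and the one-skeleton is the union of the invariant $\HH P^1$'s. The weights of the $T$-isotropy representation are the pullbacks $\psi^*(e_i \pm e_j)$, which by construction are exactly the labels in items (1) and (2); hence the GKM graph of the induced action is $\Gamma$.

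\textbf{Main obstacle.} The step I expect to require the most care is the converse, specifically the well-definedness of the homomorphism into $\Sp(n+1)/\ZZ_2$ when $\tfrac12\lambda$ fails to be integral. One must argue at the level of the quotient torus rather than lifting to $\Sp(n+1)$, and verify that the assignment on the integral characters $e_i\pm e_j$ genuinely extends to a homomorphism $T\to (\text{diagonal torus})/\ZZ_2$; the triviality of the $\ZZ_2$-action on $\HH P^n$ is what makes this both necessary and sufficient. The remaining verifications — that the fixed points and one-skeleton are as claimed — follow routinely from the assumed linear-independence of adjacent edge-weights together with the standard description of the $\Sp(n+1)$-action.
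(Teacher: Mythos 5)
Your proposal is correct and follows essentially the same route as the paper: the forward direction by setting $\lambda=\varphi^*(2e_0)$, $\alpha_k=\varphi^*(e_0-e_k)$ and rewriting all weights accordingly, and the converse by constructing the homomorphism with derivative $(\lambda/2,\lambda/2-\alpha_1,\ldots,\lambda/2-\alpha_n)$ and checking it descends to the $\ZZ_2$-quotient of the diagonal torus. The only cosmetic difference is that you verify well-definedness via the character (weight) lattice of the quotient torus, whereas the paper checks the dual condition on the integral lattice spanned by $e_0,\ldots,e_{n-1},\tfrac12(e_0+\cdots+e_n)$; these are equivalent.
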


\begin{proof}
In the situation of the example above, if the action is given by a homomorphism into the diagonal torus in $\Sp(n+1)$, all weights are just pullbacks along the homomorphism $\varphi:T\to \Sp(n+1)$. So  we put $\lambda:=\varphi^*(2e_0)$ and $\alpha_k:=\varphi^*(e_0-e_m)$ and rewrite the weights in terms of these linear forms. For the general case, consider a $\ZZ_2$-covering $\tilde{T}$ of $T$ which maps into the diagonal torus of $\Sp(n+1)$. It acts in GKM fashion, with GKM graph as claimed. Then pass to the quotient again.

For the converse, consider the homomorphism of $T$ into the $\ZZ_2$-quotient of the diagonal torus of $\Sp(n+1)$ whose derivative is given by $(\lambda/2,\lambda/2-\alpha_1,\ldots,\lambda/2-\alpha_n)$. Note that the integer lattice of this quotient torus is spanned by $e_0,\ldots,e_{n-1}, \frac{1}{2}(e_0+ \ldots +e_{n})$, so that this homomorphism is well-defined. Its induced action on $\HH P^n$ is GKM, with GKM graph the given one.\footnote{\cite[Proposition 5.13]{GW} states that any GKM$_3$ labeling on the complete graph, with any two edges connected by a biangle, is that of a torus action on $\HH P^n$.  While this statement is true, its proof is slightly incorrect: in the proof it is claimed that all occurring actions are those from \cite[Section 4]{GW}, but as the weights are divided by $2$ as in the argument here, the proof therefore only produces an action of a subtorus of $\Sp(n+1)/\ZZ_2$, not necessarily of $\Sp(n+1)$. \label{footnote} }
\end{proof}
In case $n=1$ we obtain the picture in Figure \ref{fig:quatbiangles}. For $n=2$ we obtain Figure \ref{fig:actionsonHP2}. At every vertex we depicted one possible choice of quaternionically compatible lifts.

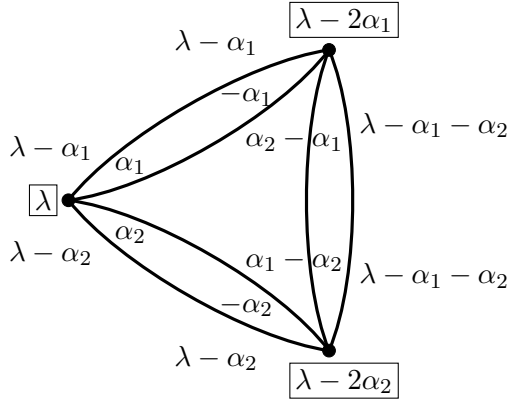
\begin{figure}[h]
\begin{center}
\begin{tikzpicture}[scale=2]
\draw[very thick, rotate around={-60:(2,1)}]
     % (2,1) .. controls (1.93,0.5) and (1.93,-0.5) .. (2,-1)
     (2,1) .. controls (1.8,0.5) and (1.8,-0.5) .. (2,-1)
%        (2,1) .. controls (2.08,0.5) and (2.08,-0.5) .. (2,-1)
       (2,1) .. controls (2.21,0.5) and (2.21,-0.5) .. (2,-1);
 \draw[very thick, rotate around={60:(2,-1)}]
      %(2,1) .. controls (1.93,0.5) and (1.93,-0.5) .. (2,-1)
      (2,1) .. controls (1.8,0.5) and (1.8,-0.5) .. (2,-1)
       %(2,1) .. controls (2.08,0.5) and (2.08,-0.5) .. (2,-1)
       (2,1) .. controls (2.21,0.5) and (2.21,-0.5) .. (2,-1);      
\draw[very thick] %(2,1) .. controls (1.93,0.5) and (1.93,-0.5) .. (2,-1)
      (2,1) .. controls (1.8,0.5) and (1.8,-0.5) .. (2,-1)
       %(2,1) .. controls (2.08,0.5) and (2.08,-0.5) .. (2,-1)
       (2,1) .. controls (2.21,0.5) and (2.21,-0.5) .. (2,-1);
       \node at (0.1,0) [rectangle, draw, inner sep =2pt]{$\lambda$};
        \node at (0.15,0.35) {$\lambda-\alpha_1$};
\filldraw (0.267949192,0) circle (1.2pt)
          (2,1) circle (1.2pt)
          (2,-1) circle (1.2pt);
          
        \node at (.68,.23) {$\alpha_1$};  
        \node at (.68,-.23) {$\alpha_2$};  
        \node at (0.15,-.35) {$\lambda-\alpha_2$};  
       \node at (2.1,1.2) [rectangle, draw, inner sep =2pt]{$\lambda-2\alpha_1$};
       \node at (2.1,-1.2) [rectangle, draw, inner sep =2pt]{$\lambda-2\alpha_2$};
       \node at (1.45,.7) {$-\alpha_1$};
       \node at (1.45,-.7) {$-\alpha_2$};
       \node at (1.25,1.05) {$\lambda-\alpha_1$};
       \node at (1.25,-1.05) {$\lambda-\alpha_2$};
		\node at (1.77,.4) {$\alpha_2-\alpha_1$};
		\node at (1.77,-.4) {$\alpha_1-\alpha_2$};
		\node at (2.7,.5) {$\lambda-\alpha_1-\alpha_2$};
		\node at (2.7,-.5) {$\lambda-\alpha_1-\alpha_2$};

\end{tikzpicture}
\end{center}
\caption{GKM graphs of GKM$_3$ actions on $\HH P^2$}\label{fig:actionsonHP2}
\end{figure}

\subsection{$\Gr_2(\CC^n)$}

Consider the Lie group $\SU(n)$ with standard maximal torus $T$. Its root system is $\{\pm (e_i-e_j)\}$. Choosing $e_1-e_2$ as the highest root, we obtain the isotropy $H = {\mathrm{S}}(\Un(2)\times \Un(n-2))$ of the Wolf space $\Gr_2(\CC^n) = \SU(n)/{\mathrm{S}}(\Un(2)\times \Un(n-2)) = \Un(n)/\Un(2)\times \Un(n-2)$.

Consider the $T$-action on $\Gr_2(\CC^n)$ by left multiplication. The fixed points of this action are the complex $2$-planes $$v_{ij}=v_{ji}=\mathbb{C}_i\oplus \mathbb{C}_j\subset \oplus_{l=1}^n\mathbb{C}_l= \CC^n$$
spanned by the $i$-th and $j$-th standard basis vectors, $i\neq j$. We choose as representatives in $\Gr_2(\mathbb{C}^n)$ the permutation matrices $A_{ij}$ in $\Un(n)$ that switch the first with the $i$-th and the second with the $j$-th standard basis vector.

It is known, see for instance \cite[Proposition 4.1.1]{ChenHe}, that this action is of GKM type, and that the GKM graph is as follows: there are no multiple edges; two vertices $v_{ij}$, $v_{lm}$ are connected by an edge if and only if the sets of indices $\{i,j\}$, $\{l,m\}$ intersect in exactly one element. If $k\in \{1,\dots,n\}, k\neq i,j$, then $v_{ij}$ and $v_{ik}$ are connected by an edge with label $\pm (e_k-e_j)$, and $v_{ij}$ and $v_{jk}$ are connected  with an edge with label $\pm (e_k-e_i)$. Note that in \cite{ChenHe} the GKM graph was determined as a signed graph, with the signs of the labels induced from the natural global complex structure on the Hermitian symmetric space $\Gr_2(\mathbb{C}^n)$. In our situation, however, this complex structure is irrelevant, as this complex structure has nothing to do with the structure of a quaternion-K\"ahler manifold, cf.\ \cite[Proposition 2.6 and Example 1.2]{Pontecorvo}.

The quaternionic weight at the origin $v_{12}$ is the highest root $\pm (e_1-e_2)$. Choosing the lift $e_1-e_2$, the quaternionically compatible lifts of the edges adjacent to $v_{12}$ are $e_1-e_k$ and $e_k-e_2$, $k\geq 3$.

To compute the quaternionic weight at the fixed point $v_{ij}=A_{ij}v_{12}$, we observe that conjugation by $A_{ij}$ of any element in ${\mathfrak{t}}$ permutes the first and second diagonal entries with the $i$-th and $j$-th diagonal entries, respectively. Hence, we obtain that the quaternionic weight at $v_{ij}$ is $\pm (e_i-e_j)$.

As in the case of $\HH P^n$, we formulate a proposition in which we consider a slightly more general action, and rewrite the labels in terms of the weights at the origin.

\begin{proposition}\label{prop:actionsonG2Cn}
Consider the quaternion-K\"ahler manifold $\Gr_2(\CC^n)=\Un(n)/\Un(2)\times \Un(n-2)$, with a GKM action of a torus $T$ given by a homomorphism $T\to \Un(n)$ into the standard diagonal torus of $\Un(n)$. Let $v_{ij}=v_{ji} = \CC_i\oplus\CC_j$ be the $T$-fixed points as before, and denote the weight of the induced $T$-representation on $Q_{v_{12}}$ by $\lambda\in \mft^*$ (with arbitrarily chosen sign). Then we find elements  $\alpha_k\in \mft^*$, $k=3,\ldots,n$, in the integer lattice such that the edges of the GKM graph of the action, together with their labeling, are given as follows:
\begin{enumerate}
\item For any $k=3,\ldots,n$, there is an edge between $v_{12}$ and $v_{1k}$ labeled $\pm \alpha_k$, and an edge between $v_{12}$ and $v_{2k}$ labeled $\pm (\lambda-\alpha_k)$.
\item For any $k=3,\ldots,n$, there is an edge between $v_{1k}$ and $v_{2k}$ labeled $\pm \lambda$.
\item For any $k=1,\ldots,n$ and $l,m=3,\ldots,n$, all three pairwise distinct, there is an edge between $v_{kl}$ and $v_{km}$, labeled $\pm (\alpha_l-\alpha_m)$.
\item For any $k,l=3,\ldots,n$, $k\neq l$, there is an edge between $v_{1k}$ and $v_{kl}$ labeled $\pm (\lambda-\alpha_l)$, and an edge between $v_{2k}$ and $v_{kl}$ labeled $\pm \alpha_l$.
\end{enumerate}
Moreover, the weights of the induced representations on $Q$ are given as follows:
\begin{enumerate}
\item $\pm \lambda$ at $v_{12}$
\item $\pm (\lambda-\alpha_k)$ at $v_{1k}$, $k\geq 3$
\item $\pm \alpha_k$ at $v_{2k}$, $k\geq 3$
\item $\pm (\alpha_k-\alpha_l)$ at $v_{kl}$, $k,l\geq 3$, $k\neq l$.
\end{enumerate}
Conversely, given characters $\lambda,\alpha_3,\ldots,\alpha_n$ of $T$ (whose derivatives in $\mft^*$ we denote by the same letters) such that the graph $\Gamma$ with edges and labels as above satisfies that for every vertex $v$, the weights of any two edges at $v$ are linearly independent, then there exists a homomorphism of $T$ into the diagonal torus of $\Un(n)$ such that the induced action on $\Gr_2(\CC^n)$ is GKM and has GKM graph $\Gamma$.
\end{proposition}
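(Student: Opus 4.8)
The plan is to mirror the proof of Proposition \ref{prop:labelsHPn}, reducing both directions to the already-known description of the GKM graph of the standard torus action (following \cite{ChenHe}) together with a linear change of coordinates. For the forward direction, write $\varphi\colon T\to \Un(n)$ for the given homomorphism into the standard diagonal torus $T_{\mathrm{diag}}$, and set $\lambda:=\varphi^*(e_1-e_2)$ and $\alpha_k:=\varphi^*(e_k-e_2)$ for $k=3,\ldots,n$; these are integral since $\varphi$ is a homomorphism. First I would pin down the fixed point set: as the action factors through $\varphi(T)\subseteq T_{\mathrm{diag}}$ we have $M^T\supseteq M^{T_{\mathrm{diag}}}=\{v_{ij}\}$, while $M^T=M^{\varphi(T)}$ is $T_{\mathrm{diag}}$-invariant, so the GKM assumption that $M^T$ be finite forces, $T_{\mathrm{diag}}$ being connected, the reverse inclusion. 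Every isotropy weight of $T$ is then the $\varphi^*$-pullback of the corresponding weight of the standard action, and substituting $e_1-e_2=\lambda$ and $e_k-e_2=\alpha_k$ rewrites each label in the claimed form: the edge $v_{12}v_{1k}$ carries $\pm(e_k-e_2)=\pm\alpha_k$, the edge $v_{12}v_{2k}$ carries $\pm(e_k-e_1)=\pm(\lambda-\alpha_k)$, the edge $v_{1k}v_{2k}$ carries $\pm(e_1-e_2)=\pm\lambda$, and similarly for items (3) and (4). The quaternionic weights, equal to $\pm(e_i-e_j)$ at $v_{ij}$ by the computation preceding the proposition (ultimately by Proposition \ref{prop:quatrep}), become $\pm\lambda$, $\pm(\lambda-\alpha_k)$, $\pm\alpha_k$, $\pm(\alpha_k-\alpha_l)$ under the same substitution.

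For the converse, I would define $\varphi\colon T\to T_{\mathrm{diag}}\subset\Un(n)$ by prescribing its dual on the weight lattice $\ZZ^n=\langle e_1,\ldots,e_n\rangle$ of $T_{\mathrm{diag}}$ via $e_1\mapsto\lambda$, $e_2\mapsto 0$, and $e_k\mapsto\alpha_k$ for $k\geq 3$. Unlike in the $\Sp(n+1)$ case of Proposition \ref{prop:labelsHPn}, no halving is needed here: because the relevant highest root $e_1-e_2$ has unit coefficients, $\varphi$ is automatically well-defined over $\ZZ$ as soon as $\lambda,\alpha_3,\ldots,\alpha_n$ are characters. By construction $\varphi^*(e_1-e_2)=\lambda$ and $\varphi^*(e_k-e_2)=\alpha_k$, so by the relabelling of the forward direction the pulled-back standard labels are exactly those of $\Gamma$. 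It then remains to see that the induced action is GKM with GKM graph $\Gamma$: orientability and the vanishing of odd Betti numbers are automatic for $\Gr_2(\CC^n)$, while the hypothesis that any two edge labels at a vertex be linearly independent guarantees in particular that every isotropy weight is nonzero. This forces $M^{\varphi(T)}=M^{T_{\mathrm{diag}}}=\{v_{ij}\}$, since a positive-dimensional component of $M^{\varphi(T)}$ would, being compact and $T_{\mathrm{diag}}$-invariant, contain some $v_{ij}$, at which its tangent space would be a nonzero $\varphi(T)$-fixed subspace, i.e.\ a vanishing weight. Hence the fixed point set is finite and isolated with pairwise independent weights.

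The argument is largely bookkeeping built on the standard labelling, and the computations run parallel to those for $\HH P^n$. I expect the two fixed point set identifications $M^T=\{v_{ij}\}$ to be the steps requiring genuine care rather than the routine relabelling: in the forward direction this uses the GKM finiteness assumption together with connectedness of $T_{\mathrm{diag}}$, whereas in the converse it must instead be extracted from the nonvanishing of all isotropy weights, which is precisely where the pairwise linear independence hypothesis on $\Gamma$ enters. This verification is in fact cleaner here than for $\HH P^n$, because the coefficient-one highest root $e_1-e_2$ removes the $\ZZ_2$-covering subtlety present in the $\Sp(n+1)$ case.
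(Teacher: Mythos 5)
Your proposal is correct and follows essentially the same route as the paper: the same choice $\lambda=\varphi^*(e_1-e_2)$, $\alpha_k=\varphi^*(e_k-e_2)$ for the forward direction, and the same homomorphism $t\mapsto\diag(\lambda(t),1,\alpha_3(t),\ldots,\alpha_n(t))$ for the converse. The paper leaves the fixed-point identifications and the GKM verification implicit, whereas you spell them out; your added arguments are sound and your observation that no $\ZZ_2$-covering issue arises here (in contrast to the $\Sp(n+1)$ case) matches the paper's treatment.
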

For better illustration, let us draw the  graphs for $n=3$ and $n=4$: for $n=3$, i.e., an action on $\Gr_2(\CC^3) \cong \CC P^2$, we obtain (omitting every $\pm$) Figure \ref{fig:actionsonCP2}.
%\begin{center}
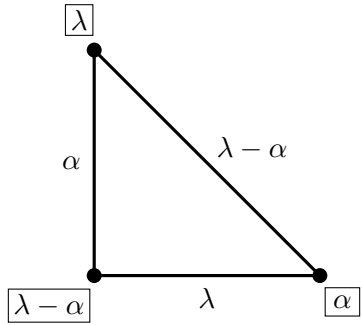
\begin{figure}[h]
		\begin{tikzpicture}

			\draw [very thick] (0,0) -- (3,-3) -- (0,-3) -- (0,0);	
			\node at (0,0)[circle,fill,inner sep=2pt] {};
			\node at (3,-3)[circle,fill,inner sep=2pt]{};
			\node at (0,-3)[circle,fill,inner sep=2pt]{};
			\node at (2.1,-1.3) {$\lambda-\alpha$};
			\node at (-.2,.4)[rectangle, draw, inner sep =2pt]{$\lambda$};
			\node at (-.3,-1.5) {$\alpha$};

%			\node at (-.5,-2.7) {$-\alpha$};
			\node at (-.6,-3.4) [rectangle, draw, inner sep=2pt]{$\lambda-\alpha$};
			\node at (1.5,-3.3) {$\lambda$};
			
%			\node at (2.6,-3.3) {$\lambda$};
			\node at (3.3,-3.35) [rectangle, draw, inner sep = 3pt]{$\alpha$};
%			\node at (3.25,-2.45) {$\alpha-\lambda$};

		\end{tikzpicture}
		\caption{GKM graphs of GKM actions on $\Gr_2(\CC^3)\cong \CC P^2$}\label{fig:actionsonCP2}
		\end{figure}
%	\end{center}
Recall again that because of Remark \ref{rem:signcompatibilitiesintheunsignedgraph}, the information on the quaternionic sign compatibilities is encoded in the data of the unsigned graph and the unsigned weights on $Q$. One possible choice of signs is given as in Figure \ref{fig:actionsonCP2withsign}.
In every vertex, we would be allowed to replace every weight (including that of the representation on $Q$) by its negative. 

For $n=4$ we obtain the unsigned graph in Figure \ref{fig:actionsonG2Cn}. We draw $v_{12}$ on the top, the middle layer has the vertices $v_{13}$ and $v_{14}$ on the left, and $v_{23}$, $v_{24}$ on the right; on the bottom we find $v_{34}$.
\begin{figure}[h]
		\begin{tikzpicture}[scale=1.5]

			\draw [very thick](0,0) -- (3,0)--(2,-1)--(-1,-1)--(0,0);
			\draw [very thick](0,0) -- (1,1.7) -- (3,0);
			\draw [very thick](2,-1) -- (1,1.7)-- (-1,-1);
			\draw [very thick](0,0) -- (1,-2.6) -- (3,0);
			\draw [very thick](2,-1) -- (1,-2.6)-- (-1,-1);
			\node at (0,0)[circle,fill,inner sep=2pt] {};
			\node at (3,0)[circle,fill,inner sep=2pt]{};
			\node at (2,-1)[circle,fill,inner sep=2pt]{};
			\node at (-1,-1)[circle,fill,inner sep=2pt]{};
			\node at (1,-2.6)[circle,fill,inner sep=2pt]{};
			\node at (1,1.7)[circle,fill,inner sep=2pt]{};
			\node at (0.55,-0.2)[rectangle, draw, inner sep =2pt]{$\lambda-\beta$};
			\node at (-0.1,0.5){$\alpha$};
			\node at (-0.3,-2){$\lambda-\beta$};
			\node at (2,-1.5){$\alpha$};
			\node at (1.4,-1.7){$\beta$};
			\node at (0.2,-1.5){$\lambda-\alpha$};
			\node at (-1.5,-1)[rectangle, draw, inner sep =2pt]{$\lambda-\alpha$};
			\node at (1.7,-0.8)[rectangle, draw, inner sep =2pt]{$\alpha$};
			\node at (0.8,-0.8){$\lambda$};
			\node at (1.2,-0.15){$\lambda$};
			\node at (-0.2,-0.6){$\alpha-\beta$};
			\node at (2.3,-0.3){$\alpha-\beta$};
			\node at (1.8,0.5){$\lambda-\alpha$};
			\node at (1,2)[rectangle, draw, inner sep =2pt]{$\lambda$};
			\node at (3.3,0)[rectangle, draw, inner sep =2pt]{$\beta$};
			\node at (1,-2.9)[rectangle, draw, inner sep =2pt]{$\alpha-\beta$};
	\node at (2.3, 1) {$\lambda-\beta$};
	\node at (0.65, 0.8) {$\beta$};
\end{tikzpicture}
\caption{GKM graphs of GKM$_3$ actions on $\Gr_2(\CC^4)$}\label{fig:actionsonG2Cn}
\end{figure}
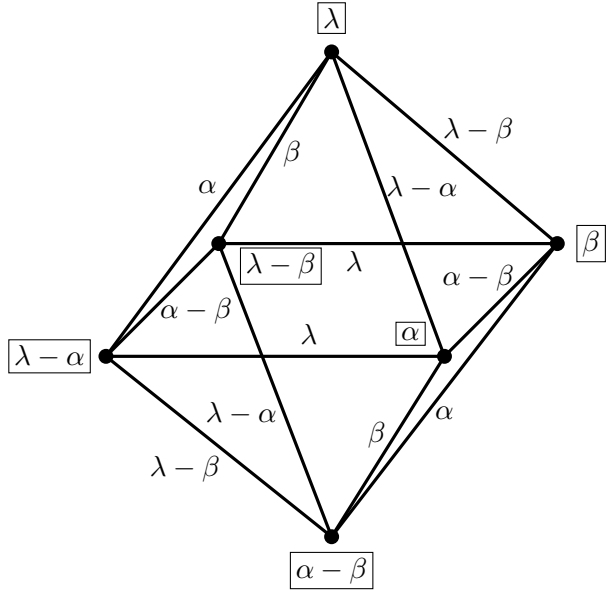
Again, the sign compatibilities are given implicitly by this data.

\begin{proof}
Consider the action given by a homomorphism $T\to \Un(n)$ into the standard diagonal torus. Then the labels are just given by the pullback of the labels of the action of the standard torus along this homomorphism. We now put $\lambda:=\varphi^*(e_1-e_2)\in \mft^*$ and $\alpha_k:=\varphi^*(e_k-e_2)$ for $k=3,\ldots,n$ (the label of the edge $v_{12},v_{1k}$). Writing all the other weights in terms of these linear forms we arrive at the expressions in the statement of the proposition. 

For the converse direction, starting with characters $\lambda$ and $\alpha_3,\ldots,\alpha_n$, we define a homomorphism $T\to \Un(n)$ as $t\mapsto \diag(\lambda(t),1,\alpha_3(t),\ldots,\alpha_n(t))$. If the characters satisfy the given linear independence conditions, the induced action on $\Gr_2(\CC^n)$ is GKM and has the desired GKM graph.
\end{proof}

\section{The main result}

In this section we prove the following purely combinatorial theorem. The assumptions on $\Gamma$ are motivated by the necessary restrictions on the $2$-faces in the quaternion-K\"ahler setting obtained in Proposition \ref{prop:4dimsubmanifolds} and Lemma \ref{lem:no6gon}. Recall the distinction between complex and noncomplex triangles with quaternionic structure from Example \ref{ex:typesoftriangles}.

\begin{theorem}\label{thm:maincombinatorialthm}
Let $\Gamma$ be an abstract GKM$_3$ graph with a quaternionic struture, such that all of its quaternionic $2$-faces are biangles or noncomplex triangles, and that the complex $2$-faces are triangles or quadrangles. Then $\Gamma$ is the GKM graph of a torus action on $\HH P^n$ or $\Gr_2(\CC^n)$ by quaternion-K\"ahler automorphisms.
\end{theorem}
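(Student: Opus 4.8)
The plan is to reduce the theorem to the converse directions of Propositions \ref{prop:labelsHPn} and \ref{prop:actionsonG2Cn}: since those construct, from the appropriately labelled model graphs, an honest torus action by quaternion-K\"ahler automorphisms, it suffices to prove the purely combinatorial statement that $\Gamma$, together with its labels, is isomorphic to the GKM graph either of $\HH P^n$ (the complete graph on $n+1$ vertices, every pair joined by a biangle) or of $\Gr_2(\CC^n)$ (the incidence graph of $2$-subsets of $\{1,\dots,n\}$). Observe first that $\Gamma$ is necessarily $2n$-valent for some $n$, since the edges at each vertex are partitioned into quaternionic pairs. I would fix a base vertex $v_0$, normalise its quaternionic weight to a lift $\lambda$, and write the quaternionic pairs at $v_0$ as $(\alpha_k,\lambda-\alpha_k)$ for $k=1,\dots,n$, using quaternionically compatible lifts.

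First I would establish the \emph{dichotomy}: every quaternionic $2$-face of $\Gamma$ is of the same type, either all biangles or all noncomplex triangles. By Figures \ref{fig:quatbiangles} and \ref{fig:actionsonCP2withsign}, traversing a biangle edge changes the quaternionic weight by $-2\alpha$, whereas traversing a noncomplex-triangle edge changes it by $-\alpha$ (up to the sign ambiguity of $\lambda$); the two types are thus distinguished by the divisibility of $\lambda(v)-\lambda(w)$ by the edge weight. To prove uniformity, I would show that a biangle and a noncomplex triangle cannot emanate from the same vertex: assuming both occur at $v_0$, I would compute the labels around the complex $2$-face spanned by the two quaternionic pairs -- which by hypothesis is a triangle or a quadrangle carrying a compatible signed structure (Proposition \ref{prop:2facescomplex}) -- and derive a violation of the GKM$_3$ condition, exactly in the spirit of Lemma \ref{lem:no6gon}. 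Uniformity across all of $\Gamma$ then follows from connectedness.

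In the biangle case I would reconstruct $\HH P^n$, and in the triangle case $\Gr_2(\CC^n)$, by induction on the valence $2n$, using the quaternionic weight as the organising invariant. The guiding principle is that the assignment $v\mapsto\lambda(v)$ behaves like a combinatorial moment map: in the biangle case its values play the role of the long roots $2e_i$ of type $C_{n+1}$, with two vertices joined by a biangle precisely when their quaternionic weights differ by $2(e_i-e_j)$ and the edge label recovered as half that difference; in the triangle case the values play the role of the roots $e_i-e_j$ of type $A_{n-1}$, with an edge present precisely when two quaternionic weights differ by a root. Concretely, in the biangle case I would show that the $n$ biangle-neighbours $v_1,\dots,v_n$ of $v_0$ are pairwise joined by biangles -- forced because each complex $2$-face through $v_0$ spanned by two pairs is then a triangle, closing up $v_0,v_k,v_l$ -- so that the induced structure on $\{v_1,\dots,v_n\}$ is a copy of $\HH P^{n-1}$ to which induction applies, and matching labels with Proposition \ref{prop:labelsHPn} identifies all of $\Gamma$. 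In the triangle case the neighbours of $v_0$ organise into the $\Gr_2$ incidence pattern in an analogous inductive fashion.

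The main obstacle I anticipate is \emph{global rigidity}: both the uniformity dichotomy and, in the reconstruction, the completeness statement that $\Gamma$ contains no vertices or edges beyond those of the model. Unlike the single local exclusion of Lemma \ref{lem:no6gon}, these require propagating local label constraints coherently around the entire graph. The key tool is the uniqueness of the compatible connection for GKM$_3$ graphs, which lets the assignment $v\mapsto\lambda(v)$ be defined globally and consistently; injectivity of this assignment -- so that distinct vertices carry distinct quaternionic weights, as in both models -- together with the verification that its image is exactly the relevant set of roots, is what forces $\Gamma$ to terminate precisely at $\HH P^n$ or $\Gr_2(\CC^n)$, with surjectivity onto the model supplied by the valence count at each vertex.
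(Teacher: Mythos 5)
Your overall architecture matches the paper's: reduce to the converse directions of Propositions \ref{prop:labelsHPn} and \ref{prop:actionsonG2Cn}, split according to whether a quaternionic biangle occurs, and reconstruct the complete-graph-with-biangles model in one case and the $\Gr_2$ incidence graph in the other. The dichotomy you propose is essentially what the paper establishes, though not as a purely local statement at a single vertex: the actual content is Lemma \ref{lemma:biangles} (an edge of a complex triangle lying in a quaternionic biangle forces all three edges to do so) together with Lemma \ref{lem:kaehlersquares}, whose part (2) shows that the existence of a single complex quadrangle anywhere excludes quaternionic biangles everywhere. Your plan to get a GKM$_3$ contradiction from a biangle pair and a triangle pair meeting at one vertex is feasible, but only after you rule out the possibility that the complex $2$-face joining them is a quadrangle, and that exclusion is exactly the involved case analysis in the proof of Lemma \ref{lem:kaehlersquares} (showing first that the quadrangle must be a product square, then that none of its edges can sit in a biangle). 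So the dichotomy is not a one-line transport-of-$\lambda$ argument; you are implicitly assuming the two hardest lemmas.

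The more serious gap is the $\Gr_2(\CC^n)$ reconstruction, which you dispatch with ``the neighbours of $v_0$ organise into the $\Gr_2$ incidence pattern in an analogous inductive fashion.'' This is where the bulk of the paper's work lies: Lemma \ref{lemma:4-faces} shows that any two quaternionic triangles at a vertex are contained in a connection-invariant $4$-face isomorphic to the GKM graph of $\Gr_2(\CC^4)$ (Figure \ref{fig:actionsonG2Cn}), and Proposition \ref{prop:quatgraphsnobiangles} then glues these $4$-faces, checking in particular that the ``far'' triangles such as $v_{34},v_{35},v_{45}$ close up via the GKM$_3$ condition. Nothing in your sketch produces this octahedral $4$-face structure. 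Moreover, your proposed organising tool --- injectivity of $v\mapsto\lambda(v)$ and the identification of its image with a root system of type $C_{n+1}$ or $A_{n-1}$ --- is not established and is close to circular: in an abstract $\Gamma$ satisfying only the hypotheses, injectivity of the quaternionic weight is most naturally a \emph{consequence} of $\Gamma$ being the model graph (where it follows from the GKM$_2$/GKM$_3$ conditions on the labels), not a tool available beforehand. Without either the $4$-face lemma or a proof of injectivity from the axioms alone, the induction in the triangle case does not get started. The biangle case, by contrast, is essentially complete as you describe it, modulo the two lemmas above.
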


\begin{remark}
Note that due to Proposition \ref{prop:2facescomplex} the complex $2$-faces can a priori never be biangles, as a biangle does not admit any signed structure.
\end{remark}
In the following, we fix an abstract GKM$_3$ graph $\Gamma$ with a quaternionic structure satisfying the assumptions of Theorem \ref{thm:maincombinatorialthm}.

\begin{lemma}\label{lemma:biangles}
If an edge in a complex triangle in $\Gamma$ is contained in a quaternionic biangle, then all three of its edges are contained in a quaternionic biangle.
\end{lemma}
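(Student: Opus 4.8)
The plan is to track, as one goes around the complex triangle, how the quaternionically compatible lift of the quaternionic weight changes across each edge, and to read off from this ``weight increment'' whether a given edge sits in a biangle or in a noncomplex triangle. First I would fix notation: by Proposition \ref{prop:2facescomplex} the complex triangle carries a signed structure compatible with the quaternionic structure, and by Proposition \ref{prop:4dimsubmanifolds} it is the GKM graph of a toric $\CC P^2$; hence, after choosing a basis, its three vertices $u,v,w$ and signed edge labels may be taken to be $uv\colon a$, $uw\colon b$, and $vw\colon b-a$ (read at $u$, $u$, $v$ respectively), with $a,b$ linearly independent by the GKM$_2$ condition at $u$. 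Let $\tilde\lambda_u,\tilde\lambda_v,\tilde\lambda_w$ be the quaternionically compatible lifts of the quaternionic weights transported around this signed structure. The decisive ingredient is a weight-increment rule for quaternionic $2$-faces: reading Figure \ref{fig:quatbiangles} (resp.\ Figure \ref{fig:actionsonCP2withsign}) shows that crossing an edge labelled $\ell$ inside a quaternionic biangle changes the compatible quaternionic weight by $-2\ell$, while inside a noncomplex triangle it changes it by $-\ell$. By Remark \ref{rem:signcompatibilitiesintheunsignedgraph} the quaternionic pairs and compatible lifts are determined by the unsigned data, so this increment is intrinsic to the edge and independent of which $2$-face we use to compute it.

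Next I would run a holonomy-type argument around the triangle. Writing the three increments as $\tilde\lambda_v-\tilde\lambda_u=k_1 a$, $\tilde\lambda_w-\tilde\lambda_v=k_3(b-a)$ and $\tilde\lambda_w-\tilde\lambda_u=k_2 b$ (each an integer multiple of the corresponding label by condition~(2) of Definition \ref{defn:quatstrgraph}), adding the first two and comparing with the third yields $k_1 a+k_3(b-a)-k_2 b=0$, so linear independence of $a,b$ forces $k_1=k_2=k_3=:k$. Now I bring in the hypothesis: if one edge, say $uv$, lies in a quaternionic biangle, then its increment is $-2a$ by the rule above, i.e.\ $k_1=-2$; hence $k=-2$, and the compatible weight increment across \emph{every} edge of the complex triangle equals $-2$ times its label.

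Finally, for each of the remaining edges I would consider the (unique, by GKM$_3$) quaternionic $2$-face through that edge and its quaternionic partner. By the standing assumption of the theorem this face is a biangle or a noncomplex triangle, and by the increment rule these are distinguished precisely by whether the intrinsic increment equals $-2\ell$ or $-\ell$. Since the increment was just computed to be $-2\ell$ and $\ell\neq 0$, the face must be a biangle; thus all three edges lie in quaternionic biangles, as claimed. The step demanding the most care is the sign bookkeeping in the first paragraph: one must verify that the compatible lift of the quaternionic weight used in the loop computation (built from the signed structure of the complex triangle) is the same one governing the biangle/noncomplex-triangle increment, which is exactly what the intrinsicness in Remark \ref{rem:signcompatibilitiesintheunsignedgraph} secures.
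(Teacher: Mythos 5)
Your argument is correct and takes essentially the same route as the paper's proof: both transport the quaternionically compatible lift of the quaternionic weight around the complex triangle and exploit that a quaternionic biangle shifts it by $-2\ell$ while a noncomplex quaternionic triangle shifts it by $-\ell$ (read off from Figures \ref{fig:quatbiangles} and \ref{fig:actionsonCP2withsign}), so that comparing the two paths to the third vertex forces all three shift coefficients to agree. Your holonomy identity $k_1=k_2=k_3$ is a mild streamlining of the paper's explicit case check; the only cosmetic quibble is that the labels $a$, $b$, $b-a$ of a signed triangle should be justified combinatorially from connection compatibility (cf.\ Example \ref{ex:typesoftriangles}) rather than via the geometric Proposition \ref{prop:4dimsubmanifolds}, which does not apply to an abstract GKM graph.
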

\begin{proof}
Let $p_1,p_2,p_3$ be the three vertices of the complex triangle, where $p_1$ and $p_2$ are connected by a quaternionic biangle. Note that by Proposition \ref{prop:2facescomplex} this triangle admits a signed structure compatible with the quaternionic structure on $\Gamma$. 

By Figure \ref{fig:quatbiangles}, the situation is as follows:

	\begin{center}
		\begin{tikzpicture}

			\draw [very thick] (0,0) -- (3,-3) -- (0,-3);
			\draw [very thick] (0,0) to[in=80, out=-80] (0,-3);				\draw [very thick] (0,0) to[in=100, out=-100] (0,-3);
			
			\node at (0,0)[circle,fill,inner sep=2pt] {};
			\node at (3,-3)[circle,fill,inner sep=2pt]{};
			\node at (0,-3)[circle,fill,inner sep=2pt]{};
			\node at (.8,-.2) {$\beta$};
			\node at (-.2,.4)[rectangle, draw, inner sep =2pt]{$\lambda$};
			\node at (.4,-.85) {$\alpha$};
			\node at (-.8,-.8) {$\lambda-\alpha$};

			\node at (.5,-2.45) {$-\alpha$};
			\node at (-.8,-2.4) {$\lambda-\alpha$};
			\node at (-.6,-3.4) [rectangle, draw, inner sep=2pt]{$\lambda-2\alpha$};
			\node at (.8,-3.3) {$\beta-\alpha$};
			
			\node at (2.3,-3.3) {$\alpha-\beta$};
			\node at (3.3,-3.35) [rectangle, draw, inner sep = 3pt]{?};
			\node at (2.9,-2.35) {$-\beta$};

		\end{tikzpicture}
	\end{center}

Assuming that the edge labelled $\pm \beta$ was contained in a quaternionic triangle, then by Figure \ref{fig:actionsonCP2withsign} the weight of $Q_{p_3}$ (indicated with a question mark in the above figure) would be $\lambda-\beta$.  Now consider the remaining edge of the triangle: if it was contained in a quaternionic biangle, then the same weight would be $\lambda-2\alpha - 2(\beta-\alpha) = \lambda-2\beta$ by Figure \ref{fig:quatbiangles}, which is impossible. But if it was contained in a quaternionic triangle, then it would be $\lambda-2\alpha - (\beta-\alpha) = \lambda -\alpha-\beta$ by Figure \ref{fig:actionsonCP2withsign}, which is impossible as well. Hence, the edge labelled $\pm \beta$ cannot be contained in a quaternionic triangle.

Analogously one shows that the edge labelled $\pm (\beta-\alpha)$ cannot be contained in a quaternionic triangle.
\end{proof}

\begin{lemma}\label{lem:kaehlersquares}
If there exists a complex quadrangle $\Gamma'$ in $\Gamma$, then
\begin{enumerate}
\item The labeling of $\Gamma'$ is that of a standard action on $S^2\times S^2$, as in this figure:
\begin{center}
		\begin{tikzpicture}
			\draw [very thick] (0,0) -- (3,0) -- (3,-3) -- (0,-3)-- (0,0);
			%\draw [very thick] (0,0) to[in=80, out=-80] (0,-3);				\draw [very thick] (0,0) to[in=100, out=-100] (0,-3);
			
			\node at (0,0)[circle,fill,inner sep=2pt] {};
			\node at (3,0)[circle,fill,inner sep=2pt] {};
			\node at (3,-3)[circle,fill,inner sep=2pt]{};
			\node at (0,-3)[circle,fill,inner sep=2pt]{};
			\node at (.5,.4) {$\beta$};
			\node at (2.5,.4) {$-\beta$};
			%\node at (-.2,.4)[rectangle, draw, inner sep =2pt]{$\lambda$};
			\node at (-.4,-.7) {$\alpha$};
			\node at (-.6,-2.4) {$-\alpha$};
			%\node at (-.6,-3.4) [rectangle, draw, inner sep=2pt]{$\lambda-\alpha$};
			\node at (.5,-3.3) {$\beta$};
			
			\node at (2.5,-3.3) {$-\beta$};

			\node at (3.4,-2.45) {$-\alpha$};
			\node at (3.4,-.6) {$\alpha$};
		\end{tikzpicture}
	\end{center}\label{square}
 \item There cannot exist quaternionic biangles in $\Gamma$. \label{biangles}
 \item Any edge of $\Gamma$ is contained in a quaternionic triangle. \label{triangles}
 \item There is no edge in $\Gamma$ between opposite vertices of $\Gamma'$. \label{diag}
 \item Every triangle in $\Gamma$ is a connection triangle.
\end{enumerate}

\end{lemma}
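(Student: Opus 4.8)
I would organise the proof of Lemma~\ref{lem:kaehlersquares} around the single integer that measures how much the complex quadrangle is ``twisted'', and I expect the genuine difficulty to be the exclusion of both biangle edges and nontrivially twisted quadrangles.

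\textbf{Setup and reduction of part (1).} Since $\Gamma'$ is a complex $2$-face, Proposition~\ref{prop:2facescomplex} equips it with a compatible signed structure, so I may treat it as an honest signed $4$-cycle. Running the (GKM$_3$-unique) connection once around the cycle shows that the labels of a signed $4$-cycle are determined, up to the ambient freedom, by one integer $k$ recording the twisting of the connection; the product labeling in the statement is exactly $k=0$. The quaternionic input is the transport of the compatibly lifted quaternionic weight along a face edge $e$: reading off Figures~\ref{fig:quatbiangles} and~\ref{fig:actionsonCP2withsign}, one has $\tilde\lambda(t(e))=\tilde\lambda(i(e))+m_e\,\tilde\alpha(e)$ with $m_e=-2$ if $e$ lies in a quaternionic biangle and $m_e=-1$ if it lies in a noncomplex triangle. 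Closing the quaternionic weight around $\Gamma'$ produces relations of the form $m_b=m_d$ and $m_c-m_a=k\,m_b$; since each $m_\bullet\in\{-1,-2\}$ this already forces $|k|\le 1$, and, crucially, gives $k=0$ as soon as all four face edges lie in triangles. Thus part~(1) reduces to showing that \emph{no edge of $\Gamma'$ lies in a quaternionic biangle}.

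\textbf{The crux: parts (2) and (3).} The given engine is Lemma~\ref{lemma:biangles}, which (together with its contrapositive) makes every complex triangle monochromatic: all three edges lie in biangles, or all in triangles. I would first prove the corresponding rigidity at a complex quadrangle, namely that a single biangle among its face edges is impossible. The plan is to consider the complex $2$-face spanned by the quaternionic partner of such a biangle edge together with the opposite face edge, run the unique connection along it, and use the hypothesis that complex faces are only triangles or quadrangles: tracking the labels (which all lie in the plane spanned by the two generators) one is forced either to close prematurely or to reach a new vertex, contradicting the restriction on face lengths. Granting this, every face edge of $\Gamma'$ lies in a triangle, which gives part~(1) via the relation $k=0$. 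The global statement~(2) then follows by propagation: the biangle/triangle type of a quaternionic $2$-face is carried unchanged across shared edges by monochromaticity of complex triangles, and across complex quadrangles by the rigidity just established; connectedness of $\Gamma$ then makes the type globally constant, so either all quaternionic $2$-faces are biangles or all are triangles. The all-biangle case is the complete biangle graph of $\HH P^n$ (Proposition~\ref{prop:labelsHPn}), whose complex $2$-faces are triangles and never quadrangles, contradicting the presence of $\Gamma'$. Hence there are no biangles, and part~(3) is immediate because each edge's unique quaternionic $2$-face is then a noncomplex triangle.

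\textbf{Parts (4) and (5).} With the $S^2\times S^2$ labels in hand (writing the two generators as $\pm\alpha,\pm\beta$, so that every label of $\Gamma'$ lies in $\spn\{\alpha,\beta\}$) and the all-triangle structure established, these become GKM$_3$ linear algebra. For part~(5) I would show that any $3$-cycle $xyz$ is the connection triangle through its two edges at $x$: by GKM$_3$ the connection continues at $y$ along the unique edge whose label lies in the plane spanned by the two labels at $x$, and the rigid all-triangle labeling forces this edge to be $yz$. For part~(4), an edge between opposite vertices of $\Gamma'$, say $p_1p_3$, would by part~(5) make $p_1p_2p_3$ a connection triangle, forcing the label of the diagonal into $\spn\{\alpha,\beta\}$; then $p_1$ would carry three edges (the two square edges and the diagonal) with labels in a $2$-dimensional span, contradicting GKM$_3$.

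\textbf{Main obstacle.} The hard part is the quadrangle rigidity invoked in the second paragraph, i.e.\ excluding a biangle among the face edges, which is equivalent to excluding the twisted/mixed labeling (the Hirzebruch $\Sigma_1$ configuration). Local consistency does not forbid it: transporting the quaternionic weight around a single quadrangle is consistent both for the all-biangle product labeling and for a biangle/triangle-mixed twisted labeling, so the exclusion must genuinely use the global hypotheses -- the finiteness of $\Gamma$, the uniqueness of the GKM$_3$ connection, and above all the restriction that every complex $2$-face is a triangle or a quadrangle. Making the ``premature closure or extra vertex'' argument for the auxiliary complex face precise, and verifying it uniformly over all possible positions of the offending biangle edge, is where the real work lies.
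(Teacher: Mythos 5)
Your reduction of part (1) to the statement that no edge of $\Gamma'$ lies in a quaternionic biangle is sound (transporting the quaternionic weight around the quadrangle with increments $-\tilde\alpha(e)$ or $-2\tilde\alpha(e)$ according to whether the edge sits in a noncomplex triangle or a biangle does force the product labeling once all four increments are $-1$), and your propagation argument for parts (2) and (3) matches the paper's. But the central step is not actually proved: you explicitly defer the exclusion of a biangle among the face edges as ``where the real work lies,'' and the plan you sketch does not work as stated --- the quaternionic partner of a biangle edge and the \emph{opposite} face edge do not share a vertex, so they do not span a $2$-face, and the ``premature closure or extra vertex'' mechanism is not the one that produces the contradiction. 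The paper's argument is different in kind: for a face edge adjacent to the putative biangle, it transports that face edge along the \emph{partner} edge of the biangle (forced by GKM$_3$ to land on the opposite face edge) and derives a contradiction from the quaternionic compatibility condition, since the same lift $\beta$ would have to be quaternionically compatible with both $\lambda-\alpha$ and $-(\lambda-\alpha)$; for the remaining positions of the biangle it runs a case analysis (auxiliary complex $2$-face a triangle or a square) using the quaternionic weights at the corners, each case contradicting GKM$_3$. Without some such concrete mechanism your proof of part (1), and hence of everything downstream, is incomplete.

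There is a second, independent gap in parts (4) and (5): you prove (4) from (5), but your direct argument for (5) is unjustified. For an arbitrary $3$-cycle $xyz$ there is no a priori reason that the label of $yz$ lies in $\spn\{\alpha(xy),\alpha(xz)\}$, so ``the rigid all-triangle labeling forces this edge to be $yz$'' asserts exactly what needs to be proved; the connection path through the two edges at $x$ could in principle continue along a different edge at $y$ and close up as a quadrangle. The paper's dependency runs the other way: part (4) is proved first by a direct computation (a diagonal edge forces the $2$-faces through it and the sides of $\Gamma'$ to be twisted squares, and comparing the resulting quaternionic weights at a corner produces three edges with labels in a $2$-dimensional span, violating GKM$_3$), and part (5) then follows because a quadrangular connection path through two edges of a triangle would exhibit the third edge as a forbidden diagonal. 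You should either adopt that order or supply an independent proof of (5); as written, (4) and (5) support each other circularly.
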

\begin{proof}
By Proposition \ref{prop:2facescomplex} we can fix a signed structure on $\Gamma'$ compatible with the quaternionic structure on $\Gamma$. Enumerate the vertices $p_1,\ldots,p_4$ clockwise, with edges $e_i$ connecting $p_i$ and $p_{i+1}$ (${\mathrm{mod}}\, 4$). Let the signed labels at $p_1$ be $\alpha$ (of $e_4$) and $\beta$ (of $e_1$), and choose the quaternionically compatible lift $\lambda$ of the quaternionic weight.

Then the label of $e_3$ at $p_4$ is of the form $\beta + c\alpha$. The label of $e_2$ is, on the one hand, $\alpha + d\beta$ at $p_2$, and on the other hand $-\alpha + e(\beta + c\alpha)$ at $p_3$. This implies that $ce=0$, and $d=e$. In other words, either $c=0$ or $d=0$, and the other is arbitrary, i.e., the labeling is that of a Hirzebruch surface. Without loss of generality we may  assume that $c=0$, i.e., the graph is as follows:

	\begin{center}
		\begin{tikzpicture}

			\draw [very thick] (0,0) -- (3,0) -- (4,-3) -- (0,-3);
			\draw [very thick] (0,0) to(0,-3);							
			\node at (0,0)[circle,fill,inner sep=2pt] {};
			\node at (3,0)[circle,fill,inner sep=2pt] {};
			\node at (4,-3)[circle,fill,inner sep=2pt]{};
			\node at (0,-3)[circle,fill,inner sep=2pt]{};
			\node at (.5,.3) {$\beta$};
			\node at (2.5,.3) {$-\beta$};
			\node at (-.2,.4)[rectangle, draw, inner sep =2pt]{$\lambda$};
			\node at (-.3,-.7) {$\alpha$};

			\node at (-.4,-2.4) {$-\alpha$};
			\node at (.5,-3.3) {$\beta$};
			
			\node at (3,-3.3) {$-\beta$};

			\node at (4.7,-2.45) {$-\alpha-d\beta$};
			\node at (4,-.6) {$\alpha+d\beta$};

		\end{tikzpicture}
	\end{center}
In order to show \eqref{square}, we need to have $d=0$.

Assume that the left vertical edge of $\Gamma'$ is contained in a quaternionic biangle (with quaternionically compatible labels $\lambda-\alpha$ both at $p_1$ and $p_2$):
	\begin{center}
		\begin{tikzpicture}

			\draw [very thick] (0,0) -- (3,0) -- (4,-3) -- (0,-3);
			\draw [very thick] (0,0) to[in=80, out=-80] (0,-3);				\draw [very thick] (0,0) to[in=100, out=-100] (0,-3);
	
			\node at (0,0)[circle,fill,inner sep=2pt] {};
			\node at (3,0)[circle,fill,inner sep=2pt] {};
			\node at (4,-3)[circle,fill,inner sep=2pt]{};
			\node at (0,-3)[circle,fill,inner sep=2pt]{};
			\node at (.5,.4) {$\beta$};
			\node at (2.5,.4) {$-\beta$};
			\node at (-.2,.4)[rectangle, draw, inner sep =2pt]{$\lambda$};
			\node at (.4,-.75) {$\alpha$};
			\node at (-.8,-.7) {$\lambda-\alpha$};

			\node at (.5,-2.45) {$-\alpha$};
			\node at (-.8,-2.4) {$\lambda-\alpha$};
			\node at (.5,-3.3) {$\beta$};
			
			\node at (3,-3.3) {$-\beta$};

			\node at (4.7,-2.45) {$-\alpha-d\beta$};
			\node at (4,-.6) {$\alpha+d\beta$};

		\end{tikzpicture}
	\end{center}
 By the GKM$_3$ condition, the connection path starting with the lower horizontal edge, followed by the vertical edge labelled $\lambda-\alpha$, has to continue with the upper horizontal edge. But then, by definition of a quaternionic structure, $\beta$ is quaternionically compatible with $-(\lambda-\alpha)$, which contradicts the fact that it was already quaternionically compatible with $\lambda-\alpha$. Hence this biangle cannot exist. Analogously, the right vertical edge cannot be contained in a quaternionic biangle.

Assume now the upper horizontal edge to be contained in a quaternionic biangle. In case $d=0$ the argument from before applies to obtain a contradiction, so consider the case $d\neq 0$ for the moment:
	\begin{center}
		\begin{tikzpicture}

			\draw [very thick] (0,0) to[in=170, out=10] (3,0);
			\draw [very thick] (0,0) to[in=190, out=-10] (3,0);
			\draw[very thick] (3,0) -- (4,-3) -- (0,-3);
			\draw [very thick] (0,0) to(0,-3);							
			\node at (0,0)[circle,fill,inner sep=2pt] {};
			\node at (3,0)[circle,fill,inner sep=2pt] {};
			\node at (4,-3)[circle,fill,inner sep=2pt]{};
			\node at (0,-3)[circle,fill,inner sep=2pt]{};
			\node at (.5,.4) {$\lambda-\beta$};
			\node at (2.5,.4) {$\lambda-\beta$};
			\node at (.5,-.4) {$\beta$};
			\node at (2.5,-.4) {$-\beta$};
			\node at (-.2,.4)[rectangle, draw, inner sep =2pt]{$\lambda$};
			\node at (-.3,-.7) {$\alpha$};

			\node at (-.4,-2.4) {$-\alpha$};
\node at (-.5,-3.4)[rectangle, draw, inner sep =2pt]{$\lambda-\alpha$};		
			\node at (.5,-3.3) {$\beta$};
			
			\node at (3,-3.3) {$-\beta$};

			\node at (4.7,-2.45) {$-\alpha-d\beta$};
			\node at (4,-.6) {$\alpha+d\beta$};

		\end{tikzpicture}
	\end{center}
Here, the quaternionic weight at $p_4$ is $\lambda-\alpha$, as we already saw that $e_4$ is necessarily contained in a quaternionic triangle.	
	
Consider now the $2$-face containing the left vertical edge and the horizontal edge labelled $\lambda-\beta$. If it is a triangle, then its third edge is labelled $\lambda-\beta-\alpha$ at $p_4$. But this is exactly the label of the edge that forms a quaternionic pair with $e_3$. Hence, the lower horizontal edge is contained in a quaternionic triangle, whose third edge forms a biangle with the edge labelled $\alpha + d\beta$. This contradicts the fact that there are no complex biangles in $\Gamma$.

If this $2$-face is a quadrangle, then we know from before that one of its pairs of opposite edges has to be labelled identically. By the GKM$_3$ condition it cannot be the pair of vertical edges, as at $p_2$ there cannot exist edges labelled $-\beta$, $\alpha+d\beta$ and $-\alpha$ (we assumed $d\neq 0$). So at $p_4$ there has to be another edge labelled $\lambda-\beta$. But the quaternionic weight at $p_4$ is $\lambda-\alpha$, hence the edge that forms a quaternionic pair with $e_3$ is labelled $\lambda-\alpha-\beta$. This contradicts the GKM$_3$ condition at $p_2$.

The same argument applies to the lower horizontal edge, so that none of the edges of $\Gamma'$ is contained in a quaternionic biangle. Now, $d=0$ follows by transporting the quaternionic weight around $\Gamma'$: at $p_3$ the quaternionic weight is $\lambda-\alpha-\beta$, following the left and lower edge, but at the same time $\lambda - \alpha - (d+1)\beta$, following the upper and right edge. This shows \eqref{square}. Note that as all edges of $\Gamma'$ are contained in quaterionic triangles,  we obtain, using Figure \ref{fig:actionsonCP2withsign}, the following labeling on $\Gamma'$:
	\begin{center}
		\begin{tikzpicture}
			\draw [very thick] (0,0) -- (3,0) -- (3,-3) -- (0,-3)-- (0,0);
			%\draw [very thick] (0,0) to[in=80, out=-80] (0,-3);				\draw [very thick] (0,0) to[in=100, out=-100] (0,-3);
			
			\node at (0,0)[circle,fill,inner sep=2pt] {};
			\node at (3,0)[circle,fill,inner sep=2pt] {};
			\node at (3,-3)[circle,fill,inner sep=2pt]{};
			\node at (0,-3)[circle,fill,inner sep=2pt]{};
			\node at (.5,.4) {$\beta$};
			\node at (2.5,.4) {$-\beta$};
			\node at (-.2,.4)[rectangle, draw, inner sep =2pt]{$\lambda$};
			\node at (3.5,.4)[rectangle, draw, inner sep =2pt]{$\lambda-\beta$};
			\node at (-.4,-.7) {$\alpha$};
			\node at (-.6,-2.4) {$-\alpha$};
			\node at (-.6,-3.4) [rectangle, draw, inner sep=2pt]{$\lambda-\alpha$};
			\node at (3.9,-3.4) [rectangle, draw, inner sep=2pt]{$\lambda-\alpha-\beta$};
			\node at (.5,-3.3) {$\beta$};
			
			\node at (2.5,-3.3) {$-\beta$};

			\node at (3.4,-2.45) {$-\alpha$};
			\node at (3.4,-.6) {$\alpha$};
		\end{tikzpicture}
	\end{center}
	
	\vspace{0.1 cm}
	Now we assume there exists a quaternionic biangle anywhere in $\Gamma$. Consider a simple path (namely a path without repeating vertices) with edges $[e_1, \dots, e_m]$ connecting a vertex of the given biangle and a vertex of $\Gamma'$, none of them being an edge of $\Gamma'$ as well as of the considered biangle. As showed above, the connection path starting with an edge of the biangle and followed by $e_1$ is --by the GKM$_3$ condition-- a complex triangle whose edges are all contained in biangles by Lemma \ref{lemma:biangles}. Iterating this argument we have that every edge $e_i$ is contained in a quaternionic biangle. In particular the connection path through $e_m$ and an adjacent edge of $\Gamma'$ is a complex triangle with all edges contained in quaternionic biangles. This contradicts the first part of this proof, and hence parts  \eqref{biangles} and \eqref{triangles} are proven.

To show part \eqref{diag} we assume there is an edge $v$ in $\Gamma$ between two opposite vertices of $\Gamma'$. By the GKM$_3$ condition and the assumptions, any connection path through an edge of $\Gamma'$ and the diagonal edge $v$ is a (twisted) complex square: it cannot be a triangle, as this would result in a biangle in the graph, which is ruled out by part \eqref{biangles}. Thus we obtain the following signed labelled graph:
\begin{center}
		\begin{tikzpicture}

			\draw [very thick](0,0) -- (0,-3);
			\draw [very thick](3,0) -- (3,-3);
			%\draw  [ultra thin](0,0) -- (3,0) -- (3,-3) -- (0,-3) -- (0,0);
			\draw [very thick] (0,-3) -- (3,0);
			\draw [very thick] (0,0) -- (3,-3);
			\node at (0,0)[circle,fill,inner sep=2pt] {};
			\node at (3,0)[circle,fill,inner sep=2pt] {};
			\node at (3,-3)[circle,fill,inner sep=2pt]{};
			\node at (0,-3)[circle,fill,inner sep=2pt]{};
% 			\node at (.5,.4) {$\beta$};
% 			\node at (2.5,.4) {$-\beta$};
			\node at (-.2,.4)[rectangle, draw, inner sep =2pt]{$\lambda$};
			\node at (3.2,.4)[rectangle, draw, inner sep =2pt]{$\lambda-\alpha-\gamma$};
			\node at (-.4,-.7) {$\alpha$};
            \node at (.7,-.4) {$\gamma$};
            \node at (.7,-2.6) {$\gamma$};
            \node at (2.1,-.4) {$-\gamma$};
            \node at (2.1,-2.6) {$-\gamma$};
			\node at (-.5,-2.4) {$-\alpha$};
			\node at (-.6,-3.4) [rectangle, draw, inner sep=2pt]{$\lambda-\alpha$};
			\node at (3.6,-3.4) [rectangle, draw, inner sep=2pt]{$\lambda-\gamma$};
% 			\node at (.5,-3.3) {$\beta$};
% 			
% 			\node at (2.5,-3.3) {$-\beta$};

			\node at (3.4,-2.45) {$\alpha$};
			\node at (3.4,-.6) {$-\alpha$};
\end{tikzpicture}
	\end{center}

 Comparing the labels of the last two pictures at the right down corner, we have that
\begin{equation}\label{eq:twisted square}
  \lambda - \gamma= -\lambda+\alpha+\beta. 
\end{equation}
We consider the following three edges emerging from the lower left vertex $p_4$ of $\Gamma'$:
\begin{itemize}
\item the second edge of the quaternionic triangle containing the diagonal $[p_4,p_2]$,
\item the second edge of the quaternionic triangle containing the lower horizontal edge of $\Gamma'$
 \item the left vertical edge of $\Gamma'$
\end{itemize}
which have respectively the following labeling
\[
 \lambda-\alpha-\gamma, \; \lambda-\alpha-\beta, \;-\alpha.
\]
By \eqref{eq:twisted square}, the first of these weights is $-\lambda+\beta$, hence we have obtained a contradiction to the GKM$_3$ condition.
	
	Finally, we consider any triangle in $\Gamma$ and the connection path through any two edges of the triangle. By part \eqref{diag} of this lemma we know that it can not be a quadrangle, and hence it has to be a connection triangle. By part \eqref{biangles} of this lemma there are no biangles, so this connection triangle coincides with the considered triangle.

\end{proof}

\begin{proposition}\label{proposition:HPn}
 If $\Gamma$ contains a quaternionic biangle, then it is the graph of a GKM$_3$-action on $\HH P^n$ by quaternion K\"ahler automorphisms.
\end{proposition}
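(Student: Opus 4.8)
The plan is to show that, under the hypothesis, $\Gamma$ must be the complete graph on $n+1$ vertices in which every pair of vertices is joined by a single quaternionic biangle, with labels exactly as listed in Proposition \ref{prop:labelsHPn}; the converse direction of that proposition then realizes $\Gamma$ as the GKM graph of a torus action on $\HH P^n$ by quaternion-K\"ahler automorphisms. First I would pin down the local structure. Since $\Gamma$ contains a quaternionic biangle, the contrapositive of Lemma \ref{lem:kaehlersquares}(2) shows that $\Gamma$ has no complex quadrangle, so every complex $2$-face is a triangle. I then propagate the biangle property: at a vertex $v$ where some edge $e$ lies in a quaternionic biangle, any other edge $f$ is not the quaternionic partner of $e$, so the $2$-face through $e$ and $f$ is a complex triangle, and Lemma \ref{lemma:biangles} forces all of its edges, in particular $f$, to lie in quaternionic biangles. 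Hence at such a $v$ every edge lies in a quaternionic biangle; and since a quaternionic biangle is quaternionic at both endpoints, this property spreads along edges and, by connectedness of $\Gamma$, holds everywhere. In particular every quaternionic pair spans a biangle (so there are no quaternionic triangles), and all complex $2$-faces are triangles.

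Next I would fix a base vertex $v_0$ together with a chosen lift $\lambda$ of its quaternionic weight. Its $2n$ edges group into $n$ quaternionic biangles; by Figure \ref{fig:quatbiangles} the $k$-th biangle joins $v_0$ to a vertex $v_k$ with labels $\alpha_k$ and $\lambda-\alpha_k$, and quaternionic weight $\lambda-2\alpha_k$ at $v_k$. For two distinct neighbours $v_k, v_l$ the edges labelled $\alpha_k$ and $\alpha_l$ are not a quaternionic pair, so the $2$-face through them is a complex triangle whose third edge joins $v_k$ and $v_l$, which are therefore adjacent. To compute the label of this edge I would transport $\alpha_l$ along $\alpha_k$ in the compatible signed structure provided by Proposition \ref{prop:2facescomplex}, obtaining a label $\alpha_l + c\,\alpha_k$ at $v_k$; the symmetric transport at $v_l$ gives $\alpha_k + c\,\alpha_l$, and since these describe the same edge with opposite orientations one gets $(1+c)(\alpha_k+\alpha_l)=0$, hence $c=-1$. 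Thus $v_k$ and $v_l$ are joined by a biangle with labels $\pm(\alpha_k-\alpha_l)$ and, from the quaternionic weight $\lambda-2\alpha_k$ at $v_k$, its partner $\pm(\lambda-\alpha_k-\alpha_l)$, which are precisely the labels of Proposition \ref{prop:labelsHPn}.

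Finally I would rule out any further vertices. Distinct biangles at $v_0$ reach distinct vertices: two non-partner edges at $v_0$ into the same vertex $w$ would span a complex triangle with the degenerate vertex set $\{v_0,w,w\}$, which is impossible. Hence $v_0$ has exactly $n$ neighbours, and together with $v_0$ they form a clique in which every pair is joined by a biangle. As $\Gamma$ is $2n$-valent, each $v_k$ already exhausts all $2n$ of its edges inside this clique, so it has no neighbour outside it; by connectedness $\Gamma$ is exactly this complete graph on $n+1$ vertices with the labels above. Invoking the converse direction of Proposition \ref{prop:labelsHPn}, whose linear-independence hypothesis is the GKM condition satisfied here, then exhibits $\Gamma$ as the GKM graph of a torus action on $\HH P^n$ by quaternion-K\"ahler automorphisms, the quaternionic structure matching by Remark \ref{rem:signcompatibilitiesintheunsignedgraph}.

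The hard part will be the label extraction and the global bookkeeping in the last two paragraphs: correctly reading off the cross-label $\alpha_k-\alpha_l$ from the connection in the complex triangles, and forcing $\Gamma$ to be the complete graph with no superfluous vertices and no repeated biangles, for which the constancy of the valence and the non-degeneracy of complex triangles are the essential inputs. By contrast, the propagation of the biangle property is comparatively routine once Lemmas \ref{lemma:biangles} and \ref{lem:kaehlersquares} are in hand.
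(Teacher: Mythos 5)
Your proposal is correct and follows essentially the same route as the paper's proof: ruling out complex quadrangles via Lemma \ref{lem:kaehlersquares}(2), propagating the biangle property through complex triangles with Lemma \ref{lemma:biangles} to conclude that $\Gamma$ is the complete graph with doubled edges, reading off the labels $\pm(\alpha_k-\alpha_l)$ and $\pm(\lambda-\alpha_k-\alpha_l)$ from the quaternionic compatibility at $v_k$, and invoking the converse of Proposition \ref{prop:labelsHPn}. Your extra care in the completeness step (the valence count and the exclusion of repeated non-partner edges) only makes explicit what the paper compresses into ``iterating this argument''.
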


\begin{proof}
 Consider any quaternionic biangle in $\Gamma$. If $\Gamma$ is at least $3$-valent, we have that the connection path through any of its two edges and any other edge emerging from any of the two vertices of this biangle is a complex triangle, as by Lemma \ref{lem:kaehlersquares} above it cannot be a square. By Lemma~\ref{lemma:biangles} every edge of this triangle is contained in a quaternionic biangle. Iterating this argument we have that $\Gamma$ is a complete graph, with any two vertices connected by a quaternionic biangle. 
 
Consider any vertex $v_0$; then, the labels of the biangles at $v_0$ are of the form $\pm \alpha_k$ and $\pm (\lambda-\alpha_k)$, $k=1,\ldots,n$, where $\pm \lambda$ is the quaternionic weight at $v_0$. Denote the second vertices of these biangles by $v_k$. The quaternionic weight at $v_k$ is necessarily $\pm (\lambda-2\alpha_k)$. Consider the complex $2$-face determined by the edges labelled $\pm \alpha_k$ and $\pm \alpha_l$; we choose $\alpha_k$ and $\alpha_l$ as quaternionically compatible lifts at $v_0$, which are then automatically part of a compatible signed labeling of this $2$-face. Then one of the edges at $v_k$ in direction $v_l$ is labelled $\alpha_l-\alpha_k$. This label is quaternionically compatible with the labels $-\alpha_k$ and the lift $\lambda-2\alpha_k$ of the quaternionic weight, hence the other edge from $v_k$ to $v_l$ is labelled $\pm (\lambda-\alpha_l-\alpha_k)$. We determined the whole graph, so we can apply Proposition \ref{prop:labelsHPn} to deduce that $\Gamma$ is the graph of an action on $\HH P^n$ by quaternion-K\"ahler automorphisms.
 \end{proof}
\begin{remark}
By \cite[Lemma 5.13]{GW} it even holds true that any GKM$_3$ labeling on the complete graph with edges doubled is of this type. (But see Footnote \ref{footnote}.)
\end{remark}

\begin{lemma}\label{lemma:4-faces}
Assume that $\Gamma$ does not contain a quaternionic biangle (i.e., all its quaternionic $2$-faces are quaternionic triangles). Let $v_0$ be a vertex of $\Gamma$, and let $\lambda$ be any lift of the quaternionic weight at $v_0$. Consider two quaternionic triangles meeting in $v_0$. Then we find linear forms $\alpha,\beta\in \mft^*$ such that the lifts of the labels of the edges of the two quaternionic triangles which are quaternionically compatible with $\lambda$ are $\alpha,\lambda-\alpha$ and $\beta,\lambda-\beta$, respectively, and such that these triangles are contained in a $4$-valent subgraph as in Figure \ref{fig:actionsonG2Cn} (with $v_0$ as the top vertex). This subgraph is invariant under the connection (i.e., it is a $4$-face), and the quaternionic structure of $\Gamma$ restricts to it (i.e., it is quaternionic).
\end{lemma}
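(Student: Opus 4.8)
\emph{Setup and strategy.} The plan is to show that the two given quaternionic triangles are the two quaternionic $2$-faces through the top vertex $v_0$ of an embedded copy of the $\Gr_2(\CC^4)$-graph of Figure~\ref{fig:actionsonG2Cn}, and then to reconstruct this copy edge by edge. Since there are no quaternionic biangles, both quaternionic triangles are noncomplex triangles, so by Example~\ref{ex:typesoftriangles} (Figure~\ref{fig:actionsonCP2withsign}) their edges at $v_0$ are quaternionic pairs, with $\lambda$-compatible lifts $\alpha,\lambda-\alpha$ and $\beta,\lambda-\beta$. I would name $p_1,p_2$ the remaining vertices of the first triangle (reached along $\alpha$ resp.\ $\lambda-\alpha$, with quaternionic weights $\lambda-\alpha$ resp.\ $\alpha$ and connecting edge labelled $\lambda$) and $q_1,q_2$ those of the second (weights $\lambda-\beta,\beta$). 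The GKM$_3$ condition makes $\alpha,\beta,\lambda$ linearly independent, so these five vertices are distinct. I also record the fact used throughout: since every edge forms a quaternionic pair with a unique partner at each endpoint, and that pair spans a quaternionic $2$-face, the no-biangle hypothesis forces \emph{every} edge of $\Gamma$ to lie in a noncomplex triangle.

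\emph{The four complex faces at $v_0$.} Next I would analyse the complex $2$-faces spanned by one edge from each triangle, i.e.\ the pairs $(\alpha,\beta)$, $(\alpha,\lambda-\beta)$, $(\lambda-\alpha,\beta)$, $(\lambda-\alpha,\lambda-\beta)$. For a complex triangle the oriented edges around it sum to zero (the $\CC P^2$ normal form of Example~\ref{ex:typesoftriangles}), so transporting one $v_0$-edge along the other changes the transported label by $-1$ times the edge-label; for a quadrangle, Lemma~\ref{lem:kaehlersquares}(1) forces the standard $S^2\times S^2$ labeling, hence coefficient $0$. Now the two faces through the edge $\alpha$ transport $\beta$ and $\lambda-\beta$ to edges at $p_1$ with coefficients in $\{0,-1\}$; as the connection preserves the quaternionic pair $\{\beta,\lambda-\beta\}$, these two transported edges are a quaternionic pair at $p_1$, and comparing their compatible lifts with the weight $\lambda-\alpha$ shows the two coefficients sum to $-1$. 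Hence exactly one of $(\alpha,\beta),(\alpha,\lambda-\beta)$ is a triangle and the other a square; after possibly swapping $\beta\leftrightarrow\lambda-\beta$ I may assume $(\alpha,\beta)$ is a triangle (third vertex $q_1$, new edge $p_1q_1$ labelled $\beta-\alpha$) and $(\alpha,\lambda-\beta)$ a square $v_0p_1rq_2$. The same computation at $q_2$ (transporting the pair $\{\alpha,\lambda-\alpha\}$) shows $(\lambda-\alpha,\lambda-\beta)$ is a triangle $v_0p_2q_2$, and then at $p_2$ that $(\lambda-\alpha,\beta)$ is a square $v_0p_2r'q_1$.

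\emph{Coincidence of the bottom vertex (the crux).} The heart of the argument is to prove $r=r'$, i.e.\ that the two squares close up on a single sixth vertex. At $p_1$ the edge to $v_0$ is paired with the edge to $p_2$ (the first triangle), while the edges $p_1q_1$ and $p_1r$ have lifts $\beta-\alpha$ and $\lambda-\beta$ summing to the weight $\lambda-\alpha$; by Remark~\ref{rem:signcompatibilitiesintheunsignedgraph} and the GKM$_3$ condition they are therefore forced to be a quaternionic pair. This pair spans a noncomplex triangle $p_1q_1r$, which in particular produces an edge $q_1r$ that is the quaternionic partner of $q_1p_1$ at $q_1$. On the other hand, the square $v_0p_2r'q_1$ is a complex face, so at $q_1$ its two edges $q_1v_0$ and $q_1r'$ are \emph{not} a quaternionic pair; since $q_1v_0$ is paired with $q_1q_2$ of the second triangle, the edge $q_1r'$ must be the partner of the remaining edge $q_1p_1$. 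As quaternionic partners are unique, the edges $q_1r$ and $q_1r'$ coincide and $r=r'$. This yields all six vertices and twelve edges.

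\emph{Conclusion.} It then remains to read off all labels; a short computation using the noncomplex-triangle and standard-square normal forms gives exactly the weights of Proposition~\ref{prop:actionsonG2Cn} for $n=4$, with $\alpha,\beta$ in the role of $\alpha_3,\alpha_4$, so the converse direction of that proposition identifies the subgraph $\Sigma$ with the $\Gr_2(\CC^4)$-graph. For connection-invariance I would note that at each of the six vertices every pair of $\Sigma$-edges has its connection-partner again in $\Sigma$ — a finite verification, organised by the $S_4$-symmetry of the $\Gr_2(\CC^4)$-labeling — so by uniqueness of the GKM$_3$ connection $\nabla$ preserves $\Sigma$, making it a $4$-face; the quaternionic pairs and weights are inherited vertex by vertex, so the quaternionic structure restricts. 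The main obstacle is the crux paragraph: a priori the two squares could run off to distinct vertices, and it is precisely the uniqueness of the quaternionic partner at $q_1$, fed by the quaternionic triangle at $p_1$, that forces them to glue together.
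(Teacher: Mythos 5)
Your proof is correct, and it uses the same toolbox as the paper: the normal forms of quaternionic biangles/triangles and of complex triangles/squares, the fact that the (unique) GKM$_3$ connection preserves quaternionic pairs, and the GKM$_3$ condition to exclude coexisting labels via Remark \ref{rem:signcompatibilitiesintheunsignedgraph}. The organization of the middle of the argument is genuinely different, though, and in two places arguably cleaner. First, to decide which of the four ``mixed'' complex faces at $v_0$ are triangles and which are squares, you use the identity $c+c'=-1$ for the transport coefficients of a quaternionic pair (forced by the pair summing to the transported quaternionic weight), which settles all four faces uniformly; the paper instead makes an explicit case distinction on the face through $e_1,e_3$ and rules out the bad case by exhibiting three dependent labels at $v_1$. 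Second, you produce the sixth vertex as the far corner of the two complex squares through $v_0$ and identify the two candidates $r,r'$ via the uniqueness of the quaternionic partner of $q_1p_1$, using one quaternionic triangle $p_1q_1r$; the paper instead builds it as the common endpoint of four edges $e_i'$ coming from the quaternionic triangles over $e_7$ and $e_8$ together with the two complex squares (and, incidentally, misattributes the triangles $e_1'e_7e_3'$ and $e_4'e_8e_2'$ as complex when they are quaternionic --- your version gets this right). One small point to tighten: your assertion that $q_1r'$ ``must be the partner of the remaining edge $q_1p_1$'' does not follow by mere process of elimination if the valence exceeds $4$; but it does follow by the same label computation you use at $p_1$ (the partner of $q_1p_1$ has label $\pm(\lambda-\alpha)$ by Remark \ref{rem:signcompatibilitiesintheunsignedgraph}, and by the GKM$_2$ condition $q_1r'$ is the unique edge at $q_1$ with that label), so this is a one-line repair rather than a gap.
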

\begin{proof} 
Consider any vertex $v_0$ of $\Gamma$ and two quaternionic pairs of edges $(e_1,e_2), (e_3,e_4)$ emerging from this vertex. These edges have labeling 
 \[
  \alpha, \lambda-\alpha, \beta,\lambda-\beta,
 \]
 respectively, for some $\alpha,\beta\in \mft^*$.  We know that the connection paths through $e_1, e_2$ and through $e_3,e_4$ are quaternionic triangles, which we denote by $[e_1,e_2,e_5]$ and $[e_3,e_4,e_6]$. As the graph does not contain biangles, the two triangles meet only in $v_0$. The connection paths through any other pair of edges in $\{e_1,\dots,e_4\}$ give a complex triangle or a complex square. 
Let us consider first the case that the connection path through $e_1$ and $e_3$ is a complex triangle $[e_1,e_3,e_7]$. As the labeling of both $e_5$ and $e_6$ is $\lambda$, the GKM$_3$ condition implies that the connection path trough $e_5,e_7$ is a complex square continuing with $e_6$; we call its remaining edge $e_8$.

Observe that at each of the end points $v_i$ of $e_i$, $i=1,\dots,4$, we have a pair of quaternionic edges (namely the two edges of one of the two quaternionic triangles considered above), and at each of these vertices $v_i$, there exists another edge $e'_i$ that together with $e_7$ or $e_8$ forms another quaternionic triangle. We are thus in the following situation:

 \begin{center}
		\begin{tikzpicture}[scale=1.5]

			\draw [very thick](2,-1)--(-1,-1)--(0,0)--(3,0);
			\draw [very thick](0,0) -- (1,1.7) -- (3,0)--(2,-1);
			\draw [very thick](2,-1) -- (1,1.7)-- (-1,-1);

			\node at (0,0)[circle,fill,inner sep=2pt] {};
			\node at (3,0)[circle,fill,inner sep=2pt]{};
			\node at (2,-1)[circle,fill,inner sep=2pt]{};
			\node at (-1,-1)[circle,fill,inner sep=2pt]{};
			\node at (1,1.7)[circle,fill,inner sep=2pt]{};
		\node at (1,2){$v_0$};
			\node at (1.68,0.5) {$e_2$};
			\node at (0.05,0.7) {$e_1$};
			\node at (0.6,-0.85) {$e_5$};
			\node at (2.17,0.95) {$e_4$};
			\node at (0.7,0.8) {$e_3$};
			\node at (1,-0.15) {$e_6$};
			\node at (-0.2,-0.5) {$e_7$};

			\draw [very thick](0,0) -- (0.5,-1.4);
			\node at (0,-2) {$e'_1$};
			\node at (0.28,-1.4) {$e'_3$};
			\node at (1.7,-1.9) {$e'_2$};
			\node at (2.45,-1.2) {$e'_4$};

			\draw [very thick](2,-1.5) -- (3,0);
			\draw [very thick](2,-1) -- (1.4,-2);
			\draw [very thick](0.3,-2)-- (-1,-1);
			\node at (2.2,-0.5) {$e_8$};

\end{tikzpicture}
	\end{center}
	Using Figure \ref{fig:actionsonCP2withsign}, the labeling of all edges is already as claimed, i.e., coincides with Figure \ref{fig:actionsonG2Cn}. Inspecting the labels and using the GKM$_3$ condition, we obtain that the paths $e_1',e_1,e_4,e_4'$ and $e_2',e_2,e_3,e_3'$ form complex squares, and that the paths $e_1',e_7,e_3'$ and $e_4',e_8,e_2'$ form complex triangles. Hence, the four edges $e_i'$ all meet in the same point, and we have obtained the desired configuration.
	
Finally, if instead we assume that the connection path through $e_1$ and $e_3$ is a square, then the connection path through $e_1,e_4$ is a complex triangle (otherwise at the vertex $v_1$ we would have three edges labeled $\lambda$, $\beta$, and $\lambda-\beta$). Thus we are in the previous situation, with the role of $e_3$ and $e_4$ interchanged. 	
\end{proof}

The following proposition concludes the proof of Theorem \ref{thm:maincombinatorialthm}.

\begin{proposition}\label{prop:quatgraphsnobiangles}
Let $\Gamma$ be a $(2n-4)$-valent abstract GKM$_3$ graph with quaternionic structure such that all quaternionic $2$-faces are  noncomplex triangles, and all complex $2$-faces are triangles or squares. Then $\Gamma$ is the GKM graph of some GKM$_3$-action on $\Gr_2(\CC^{n})$ by quaternion-K\"ahler automorphisms.
\end{proposition}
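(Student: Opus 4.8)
The plan is to use Lemma \ref{lemma:4-faces} as the fundamental building block: it packages any two quaternionic triangles meeting at a vertex into a quaternionic $4$-face isomorphic to the $\Gr_2(\CC^4)$ graph of Figure \ref{fig:actionsonG2Cn}. The strategy is to assemble all the $4$-faces anchored at a fixed base vertex into a global identification of $\Gamma$ with the GKM graph of $\Gr_2(\CC^n)$, and then invoke the converse direction of Proposition \ref{prop:actionsonG2Cn}. Since $\Gamma$ has no quaternionic biangles, every quaternionic $2$-face is a noncomplex triangle, and the valence $2n-4=2(n-2)$ forces exactly $n-2$ quaternionic pairs of edges at each vertex. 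First I would fix a base vertex $v_0$ and a lift $\lambda$ of its quaternionic weight, enumerate the quaternionic pairs at $v_0$ by $k=3,\ldots,n$, and write their quaternionically compatible labels as $\alpha_k,\lambda-\alpha_k$. Naming the endpoint of the edge labeled $\alpha_k$ by $v_{1k}$ and that of $\lambda-\alpha_k$ by $v_{2k}$, the $k$-th quaternionic triangle is $\{v_0,v_{1k},v_{2k}\}$ with $v_{1k}$--$v_{2k}$ labeled $\lambda$.

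Next, for each pair $3\le k<l\le n$ I would apply Lemma \ref{lemma:4-faces} to the two quaternionic triangles at $v_0$ indexed by $k$ and $l$. This produces a quaternionic $4$-face whose remaining (\emph{bottom}) vertex I call $w_{kl}$, and whose edges and labels are exactly those prescribed by Proposition \ref{prop:actionsonG2Cn} for the indices $1,2,k,l$: the edges $v_{1k}$--$v_{1l}$ and $v_{2k}$--$v_{2l}$ labeled $\pm(\alpha_k-\alpha_l)$, the four edges joining $w_{kl}$ to $v_{1k},v_{2k},v_{1l},v_{2l}$, and the quaternionic weights $\lambda-\alpha_k$ at $v_{1k}$, $\alpha_k$ at $v_{2k}$, and $\alpha_k-\alpha_l$ at $w_{kl}$. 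Collecting these over all pairs yields a candidate subgraph $\Gamma_0\subseteq\Gamma$ with vertex set $\{v_0\}\cup\{v_{1k},v_{2k}\}_k\cup\{w_{kl}\}_{k<l}$, of cardinality $1+2(n-2)+\binom{n-2}{2}=\binom{n}{2}$, carrying precisely the labeling of the $\Gr_2(\CC^n)$ graph.

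It then remains to prove $\Gamma_0=\Gamma$, which splits into distinctness and completeness. For distinctness, I note that any two of the vertices $v_0$, $v_{1k}$, $v_{2k}$ carrying the same quaternionic weight up to sign would force an equation such as $2\lambda=\alpha_k+\alpha_l$ or $\lambda=\alpha_k\pm\alpha_l$, each of which is a nontrivial linear dependence among three edge labels meeting at $v_0$, contradicting the GKM$_3$ condition; a comparison of quaternionic weights together with the adjacency pattern handles the bottom vertices $w_{kl}$. For completeness I would verify that at each vertex of $\Gamma_0$ all $2n-4$ incident edges of $\Gamma$ already lie in $\Gamma_0$: at $v_{1k}$, for instance, the $n-2$ neighbors $v_{1l}$ (including $v_0$), the vertex $v_{2k}$, and the $n-3$ vertices $w_{km}$ are $2n-4$ distinct vertices and hence exhaust the valence. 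Once $\Gamma_0$ is a full-valence subgraph closed under passing to neighbors, connectedness of $\Gamma$ gives $\Gamma_0=\Gamma$, and the converse direction of Proposition \ref{prop:actionsonG2Cn} realizes $\Gamma$ as the GKM graph of a torus action on $\Gr_2(\CC^n)$ by quaternion-K\"ahler automorphisms.

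The step I expect to be the main obstacle is the global coherence of the bottom layer. From the $4$-faces anchored at $v_0$ only four of the $2n-4$ neighbors of each $w_{kl}$ are visible, so I must produce the edges $w_{kl}$--$w_{km}$ and $w_{kl}$--$w_{lm}$ among bottom vertices by reapplying Lemma \ref{lemma:4-faces} at the already-understood vertices $v_{1k}$ and $v_{2k}$, and then show that the bottom vertices of those new $4$-faces coincide with the previously constructed $w$'s with the correct mutual adjacencies and no spurious identifications. Unlike the coincidences among the top and middle layers, these identifications cannot be excluded by the GKM$_3$ condition at $v_0$ alone and require tracking labels consistently around several overlapping octahedral $4$-faces.
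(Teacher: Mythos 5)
Your overall architecture is the same as the paper's: fix a base vertex, use Lemma \ref{lemma:4-faces} to package pairs of quaternionic triangles into $4$-faces modelled on Figure \ref{fig:actionsonG2Cn}, assemble these into a copy of the $\Gr_2(\CC^n)$ graph, and finish with the converse direction of Proposition \ref{prop:actionsonG2Cn}. However, there is a genuine gap in the assembly, in two places. First, the middle layer: Lemma \ref{lemma:4-faces} only asserts that \emph{some} choice of linear forms realizes the two triangles inside a $4$-face of the given shape; applied to the pair $(k,l)$ it does not tell you whether that $4$-face joins $v_{1k}$ to $v_{1l}$ or to $v_{2l}$ (equivalently, whether its ``$\beta$'' is $\alpha_l$ or $\lambda-\alpha_l$ relative to your once-and-for-all naming). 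Your claim that the $(k,l)$-face contains the edges $v_{1k}$--$v_{1l}$ and $v_{2k}$--$v_{2l}$ labelled $\pm(\alpha_k-\alpha_l)$ is therefore a consistency condition over triples of indices that must be proved, not a consequence of the lemma; the GKM$_3$ condition at $v_0$ does not settle it, contrary to what your last paragraph suggests. The paper proves exactly this: after naming $v_{15},v_{25}$ via the $(4,5)$-face, it shows the complex $2$-face through $v_{13},v_{14},v_{15}$ must be a triangle (a square is excluded by part (1) of Lemma \ref{lem:kaehlersquares} together with $\pm(\alpha_3-\alpha_5)\neq\pm(\alpha_4-\alpha_5)$), which forces the $(3,5)$-face to join $v_{13}$ to $v_{15}$ rather than to $v_{25}$.

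Second, the bottom layer, which you correctly identify as the main obstacle but leave unresolved. Here your proposed route (reapplying Lemma \ref{lemma:4-faces} at $v_{1k}$ and $v_{2k}$ and matching up the bottom vertices of overlapping octahedra) is more complicated than necessary and, as you note yourself, you do not see how to exclude spurious identifications. The paper's resolution is a short direct argument that avoids any further use of Lemma \ref{lemma:4-faces}: for distinct $k,l,m\geq 3$, consider the connection path through $v_{kl},v_{1k},v_{1m}$, whose first two edges are labelled $\pm(\lambda-\alpha_l)$ and $\pm(\alpha_k-\alpha_m)$; the GKM$_3$ condition forces the next edge to be the edge $v_{1m}$--$v_{lm}$ labelled $\pm(\lambda-\alpha_l)$, so the $2$-face is a complex square, and its fourth edge is precisely the missing edge $v_{kl}$--$v_{lm}$ with label $\pm(\alpha_l-\alpha_m)$. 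Together with a valence count (which you do carry out correctly for the top and middle layers) this closes the graph. Your distinctness argument via quaternionic weights and the GKM$_3$ condition at $v_0$ is fine (the paper uses the simpler observation that the absence of biangles forces distinctness), and you should also note the trivial base case $n=3$, where $\Gamma$ is a single noncomplex triangle as in Figure \ref{fig:actionsonCP2}.
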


\begin{proof}
For $n=3$ the statement is obvious, as the whole graph $\Gamma$ consists only of one quaternionic triangle, which is necessarily of the type in Figure \ref{fig:actionsonCP2}. From now on, consider $n\geq 4$.

Consider any vertex of $\Gamma$, and call it $v_{12}$. Let $\lambda\in \mft^*$ be the quaternionic weight at $v_{12}$, lifted arbitrarily. We know that there are $n-2$ quaternionic triangles meeting in the vertex $v_{12}$. By Lemma \ref{lemma:4-faces}, any two of these are contained in a quaternionic $4$-face of the type in Figure \ref{fig:actionsonG2Cn}. Considering the $4$-face containing the first two, we may denote the other vertices of this $4$-face $v_{13}, v_{14}, v_{23}, v_{24}$ and $v_{34}$, and find linear forms $\alpha_3,\alpha_4\in \mft^*$ such that the conditions of Proposition \ref{prop:actionsonG2Cn} that involve only vertices $v_{ij}$ with $1\leq i,j\leq 4$ are satisfied.

Let us consider now the $4$-face containing the second and third quaternionic triangle. We may denote the endpoints of the edges at $v_{12}$ in the third quaternionic triangle by $v_{15}$ and $v_{25}$ in such a way that $v_{14}$ and $v_{15}$, as well as $v_{24}$ and $v_{25}$ are connected by an edge. We denote the remaining vertex of the quaternionic $4$-face by $v_{45}$. Moreover, we find a linear form $\alpha_5\in \mft^*$ such that the conditions of Proposition \ref{prop:actionsonG2Cn} that involve only vertices $v_{ij}$ with $i,j=1,2,4,5$ are satisfied. Note that all the vertices we defined so far are necessarily distinct, as our graph does not contain any biangles.

Now consider the $4$-face spanned by the first and third quaternionic triangle. It contains the vertices $v_{12}, v_{13}, v_{23}, v_{15}, v_{25}$ as well as another vertex (again disjoint from all previously defined ones) that we denote $v_{35}$. We claim that $v_{13}$ and $v_{15}$, as well as $v_{23}$ and $v_{25}$ are connected by an edge. To see this, we show that the complex $2$-face defined by the connection path $v_{13},v_{14},v_{15}$ is a triangle. The label of the edge $v_{13},v_{14}$ is $\pm (\alpha_3-\alpha_4)$, that of the edge $v_{14},v_{15}$ is $\pm (\alpha_4 - \alpha_5)$, and the one of the edge starting at $v_{13}$ in direction either $v_{15}$ or $v_{25}$ is $\pm (\alpha_3-\alpha_5)$. By Proposition \ref{lem:kaehlersquares}, part \eqref{square}, a square is impossible as $\pm (\alpha_3-\alpha_5)\neq \pm (\alpha_4 - \alpha_5)$. All in all, we found the following configuration in our graph, with all labels compatible with Proposition \ref{prop:actionsonG2Cn}.
 \begin{center}
		\begin{tikzpicture}[scale=1.5]

			\draw (0,0) -- (3,0)--(2,-1)--(-1,-1)--(0,0);
			\draw[thick, dashed] (0,0) -- (1,1.7) -- (3,0);
			\draw (2,-1) -- (1,1.7)-- (-1,-1);
			\draw (0,0) -- (1,-2.6) -- (3,0);
			\draw (2,-1) -- (1,-2.6)-- (-1,-1);
			\node at (0,0)[circle,fill,inner sep=2pt] {};
			\node at (3,0)[circle,fill,inner sep=2pt]{};
			\node at (2,-1)[circle,fill,inner sep=2pt]{};
			\node at (-1,-1)[circle,fill,inner sep=2pt]{};
			\node at (1,-2.6)[circle,fill,inner sep=2pt]{};
			\node at (1,1.7)[circle,fill,inner sep=2pt]{};
			
			\node at (-0.1,0.15) {$v_{14}$};
			\node at (2.7,-0.1){$v_{24}$};
			\node at (2.2,-1){$v_{23}$};
			\node at (-1.2,-1){$v_{13}$};
			\node at (1,-2.8){$v_{34}$};
			\node at (1,1.9){$v_{12}$};
\draw[thick, dashed] (0,0) -- (3,0)--(3.4,0.2)--(0.5,0.2)--(0,0);
\draw[thick, dashed] (3.4,0.2) -- (1,1.7) -- (0.5,0.2);

\draw[thick, dashed] (3.4,0.2) -- (4,-1.5) -- (0.5,0.2);
\draw[thick, dashed] (0,0) -- (4,-1.5) -- (3,0);
			\node at (3.4,0.2)[circle,fill,inner sep=2pt]{};
			\node at (4,-1.5)[circle,fill,inner sep=2pt]{};
			\node at (0.5,0.2)[circle,fill,inner sep=2pt]{};
			
			\node at (3.6,0.2){$v_{25}$};
			\node at (4.1,-1.7){$v_{45}$};
			\node at (0.3,0.3){$v_{15}$};

\draw[very thick, dotted] (-1,-1)--(1,1.7)--(2,-1);
\draw[very thick, dotted] (0.5,0.2)--(1,1.7)--(3.4,0.2);
\draw[very thick, dotted] (-1,-1)--(0.5,0.2)--(3.4,0.2)--(2,-1)--(-1,-1);
\draw[very thick, dotted] (-1,-1)--(3,-2.8)--(2,-1);
\draw[very thick, dotted] (0.5,0.2)--(3,-2.8)--(3.4,0.2);
\node at (3,-2.8)[circle,fill,inner sep=2pt]{};
\node at (3,-3){$v_{35}$};
\end{tikzpicture}
\end{center}

We continue with this process of adding quaternionic triangles to the picture. In the end we have found almost all the edges of the graph, with linear forms $\alpha_3,\ldots,\alpha_n\in \mft^*$ such that almost all conditions of Proposition \ref{prop:actionsonG2Cn} are satisfies. All that remains to be shown is that for any $k,l,m=3,\ldots,n$, all three pairwise distinct, there is an edge between $v_{kl}$ and $v_{km}$ labeled $\pm (\alpha_l-\alpha_m)$, i.e., a part of Condition (3) in Proposition \ref{prop:actionsonG2Cn}. In the picture above, this is the missing triangle $v_{34}, v_{35},v_{45}$. To find these edges, consider the connection path through $v_{kl},v_{1k},v_{1m}$. The label of the edge $v_{kl},v_{1k}$ is $\pm (\lambda-\alpha_l)$, and that of the edge $v_{1k},v_{1m}$ is $\pm (\alpha_l-\alpha_m)$. It follows from the GKM$_3$ condition that the edge $v_{1m},v_{lm}$ labeled $\pm (\lambda-\alpha_l)$ is the next one in the connection path, which forces this complex $2$-face to be a square. Hence, there exists an edge $v_{kl},v_{lm}$ labeled $\pm (\alpha_l-\alpha_m)$.\end{proof}

As a corollary, we obtain the following statement, which is however already covered by \cite[Theorem 5.1]{Occhetta}.

\begin{corollary}\label{cor:main}
Let $M$ be a positive quaternion-K\"ahler manifold that admits a torus action of type GKM$_3$. Then $M$ has the rational cohomology ring of either $\HH P^n$ or the complex Grassmannian of $2$-planes $\Gr_2(\CC^{n})$.
\end{corollary}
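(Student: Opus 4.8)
The plan is to reduce the corollary to the combinatorial main theorem, Theorem \ref{thm:maincombinatorialthm}, and then to use that the GKM graph determines the cohomology ring. First I would record that the action is equivariantly formal: since $M$ is positive quaternion-K\"ahler, its odd Betti numbers vanish by \cite[Theorem 6.6]{Salamon}, so the example following Proposition \ref{prop:eqformalcomputable} applies. As $T$ acts by quaternion-K\"ahler automorphisms, and in particular by almost quaternionic automorphisms, Proposition \ref{prop:gammahasquat} equips the GKM graph $\Gamma$ of $M$ with a quaternionic structure.

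Next I would verify that $\Gamma$ satisfies the hypotheses of Theorem \ref{thm:maincombinatorialthm}, namely that the quaternionic $2$-faces are biangles or noncomplex triangles and the complex $2$-faces are triangles or quadrangles. This is precisely the content of the geometric analysis carried out above: each $2$-face corresponds to a $4$-dimensional $T$-invariant submanifold $N\subset M_2$, and by Proposition \ref{prop:structureonfixedsubmfd} this $N$ is either quaternion-K\"ahler (a quaternionic $2$-face) or K\"ahler-Einstein (a complex $2$-face). By Proposition \ref{prop:4dimsubmanifolds} the quaternionic ones are $\HH P^1$ or $\CC P^2$, hence biangles or noncomplex triangles, while the complex ones are toric K\"ahler-Einstein surfaces whose momentum polytope is a triangle, a square, or a hexagon; the hexagon is excluded by Lemma \ref{lem:no6gon}, leaving triangles and quadrangles. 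Thus the assumptions of Theorem \ref{thm:maincombinatorialthm} hold.

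Applying Theorem \ref{thm:maincombinatorialthm} then shows that $\Gamma$ is the GKM graph of an action of $T$ on $X=\HH P^n$ or $X=\Gr_2(\CC^n)$ by quaternion-K\"ahler automorphisms. Both this action and the original one on $M$ are equivariantly formal GKM$_3$ actions of $T$ sharing the \emph{same} GKM graph $\Gamma$. By Proposition \ref{prop:graphdeteqcohom} the graph determines the equivariant cohomology algebra, so there is an isomorphism $H^*_T(M)\cong H^*_T(X)$ of $R$-algebras, both being identified with the equivariant graph cohomology of $\Gamma$. Finally I would reduce modulo $R^+$: by Proposition \ref{prop:eqformalcomputable} this isomorphism descends to a ring isomorphism $H^*(M)\cong H^*_T(M)/R^+\cdot H^*_T(M)\cong H^*_T(X)/R^+\cdot H^*_T(X)\cong H^*(X)$, which is the claim.

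There is no serious obstacle remaining at the level of the corollary itself, as the substantial work has already been done, namely the combinatorial rigidity of Theorem \ref{thm:maincombinatorialthm} together with the classification of the possible $2$-faces in Proposition \ref{prop:4dimsubmanifolds} and Lemma \ref{lem:no6gon}. The only point requiring a little care is the passage from equivariant to ordinary cohomology: it is essential that the isomorphism of Proposition \ref{prop:graphdeteqcohom} is one of $R$-algebras, so that it is compatible with the $R$-module structure and hence survives the quotient by $R^+\cdot H^*_T$, while equivariant formality of both actions guarantees that this quotient genuinely recovers the full ordinary cohomology \emph{ring} on each side.
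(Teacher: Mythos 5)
Your proposal is correct and follows essentially the same route as the paper: quaternionic structure on $\Gamma$ via Proposition \ref{prop:gammahasquat}, the restriction on $2$-faces via Propositions \ref{prop:structureonfixedsubmfd} and \ref{prop:4dimsubmanifolds} together with Lemma \ref{lem:no6gon}, then Theorem \ref{thm:maincombinatorialthm} to identify $\Gamma$ with the graph of a model action, and finally Propositions \ref{prop:graphdeteqcohom} and \ref{prop:eqformalcomputable} to pass from the graph to equivariant and then ordinary cohomology. Your added remarks on equivariant formality and on the $R$-algebra structure of the isomorphism only make explicit what the paper leaves implicit.
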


\begin{proof}
Consider the GKM graph $\Gamma$ of a GKM$_3$ action of a torus $T$ on a positive quaternion-K\"ahler manifold $M$. By Proposition \ref{prop:gammahasquat}, $\Gamma$ has a quaternionic structure. By Proposition \ref{prop:4dimsubmanifolds}, its quaternionic $2$-faces are biangles or quaternionic triangles, and by the same proposition and Lemma \ref{lem:no6gon} the other $2$-faces are triangles or squares. Hence, by Theorem \ref{thm:maincombinatorialthm}, $\Gamma$  is necessarily the graph of a GKM action on $\HH P^n$ or $\Gr_2(\CC^{n+2})$ by quaternion-K\"ahler automorphisms. This implies, by Proposition \ref{prop:graphdeteqcohom}, that the rational equivariant cohomology algebra of the $T$-action on $M$ is that of an action on these model spaces. As for equivariantly formal actions, equivariant cohomology determines the ordinary cohomology ring by Proposition \ref{prop:eqformalcomputable}, the claim follows.
\end{proof}


\begin{thebibliography}{99}
\bibitem{Alekseevsky} Dmitri V.\ Alekseevskii, \emph{Riemannian spaces with exceptional holonomy groups}, Funct.\ Anal.\ Appl.\ {\bf 2} (1968), no.\ 2, 97--105.




\bibitem{AM} Dmitri V.\ Alekseevsky and Stefano Marchiafava, \emph{Hermitian and K\"ahler submanifolds of a quaternionic K\"ahler manifold}, Osaka J.\ Math.\ {\bf  38} (2001), no.\ 4, 869--904.

\bibitem{AMP1} Dmitri V.\ Alekseevsky, Stefano Marchiafava and Massimiliano Pontecorvo, \emph{Compatible almost complex structures on quaternion K\"ahler manifolds}, Ann.\ Global Anal.\ Geom.\ {\bf 16} (1998), no.\ 5, 419--444.


\bibitem{AMP} Dmitri V.\ Alekseevsky, Stefano Marchiafava and Massimiliano Pontecorvo, \emph{Compatible complex structures on almost quaternionic manifolds}, Trans.\ Amer.\ Math.\ Soc.\ {\bf 351} (1999), no.\ 3, 997--1014.


\bibitem{AlldayPuppe} Christopher Allday and Volker Puppe, \emph{Cohomological methods in transformation groups},
Cambridge Studies in Advanced Mathematics, vol.\ 32, Cambridge University Press, Cambridge, 1993.


\bibitem{AHS}  Michael F.\ Atiyah, Nigel J.\ Hitchin and Isadore M.\ Singer,
\emph{Self-duality in four-dimensional Riemannian geometry}, Proc. R. Soc. Lond., Ser. A {\bf 362} (1978), 425--461. 


\bibitem{Berger} Marcel Berger, \emph{Remarques sur le groupe d'holonomie des varietes Riemanniennes}, C. R.\ Acad. Sci. Paris 262, 1316--1318, 1966.

\bibitem{Besse} Arthur L. Besse, \emph{Einstein manifolds}. Reprint of the 1987 edition.
Classics in Mathematics. Berlin: Springer, 2008.

\bibitem{ChangSkjelbred} Theodore Chang and Tor Skjelbred, \emph{The topological Schur lemma and related results}, Ann.\
Math.\ {\bf 100} (1974), no.\ 2, 307--321.

\bibitem{ChartonSabatiniSepe} 
Isabelle Charton, Silvia Sabatini and Daniele Sepe, \emph{Compact monotone tall complexity one $T$-spaces}, Preprint, \texttt{arXiv:2307.04198}.

\bibitem{FriedrichKurke}
Thomas Friedrich and Herbert Kurke, \emph{Compact four-dimensional self-dual Einstein manifolds with positive scalar curvature}, 
Prepr., Akad.\ Wiss.\ DDR, Zentralinst.\ Math.\ Mech.\ P-18/80  (1980).

\bibitem{GMS}
Paul Gauduchon, Andrei Moroianu and Uwe Semmelmann, \emph{Almost complex structures on quaternion-K\"ahler manifolds and inner symmetric spaces}, Invent.\ Math.\ {\bf 184} (2011), no. 2, 389--403. 


\bibitem{1224}
Leonor Godinho, Frederik von Heymann, and Silvia Sabatini, \emph{12, 24 and beyond}, Adv.\ Math.\ {\bf 319} (2017), 472--521.

\bibitem{GKZbiquot}
Oliver Goertsches, Panagiotis Konstantis, and Leopold Zoller, \emph{Symplectic and K\"ahler structures on biquotients}, 
J.\ Symplectic Geom.\ {\bf 18} (2020), no.\ 3, 791--813.

\bibitem{GKZHamdim6}
Oliver Goertsches, Panagiotis Konstantis, and Leopold Zoller, 
\emph{GKM theory and Hamiltonian non-K\"ahler actions in dimension 6}, Adv.\ Math.\ {\bf 368} (2020), Article ID 107141, 16 p.


\bibitem{GKMbundles}
Oliver Goertsches, Panagiotis Konstantis, and Leopold Zoller, \emph{Realization of GKM fibrations and new examples of Hamiltonian non-Kähler actions}, Compos.\ Math.\ {\bf 159} (2023), no.\ 10, 2149--2190.

\bibitem{GKMsurvey}
Oliver Goertsches, Panagiotis Konstantis, and Leopold Zoller, \emph{Low-dimensional GKM theory}, preprint, \texttt{arXiv:2210.06234}, to appear in Contemp.\ Math.

\bibitem{GW}
Oliver Goertsches and Michael Wiemeler, \emph{Positively curved GKM manifolds}, Int.\ Math.\ Res.\ Not.\ {\bf 22}
(2015), 12015--12041.

\bibitem{GWnonneg}
Oliver Goertsches and Michael Wiemeler, \emph{Non-negatively curved GKM orbifolds}, Math.\ Z.\ {\bf 300} (2022), no.\ 2, 2007--2036. 


\bibitem{GKM}
Mark Goresky, Robert Kottwitz, and Robert MacPherson, \emph{Equivariant cohomology, Koszul duality, and
the localization theorem}, Invent.\ Math.\ {\bf 131} (1998), no.\ 1, 25--83.

\bibitem{GSZ}
Victor Guillemin, Silvia Sabatini, and Catalin Zara,
\emph{Cohomology of GKM fiber bundles}, J.\
Algebraic Combin.\ {\bf 35} (2012), no.\ 1, 19--59.

\bibitem{GHZ}
Victor Guillemin, Tara Holm, and Catalin Zara, \emph{A GKM description of the equivariant cohomology ring of a homogeneous space}, J.\ Algebraic Combin.\ {\bf 23} (2006), no. 1, 21--41.


\bibitem{GZ1}
Victor Guillemin and Catalin Zara, \emph{Equivariant de Rham theory and graphs}. Surveys in differential geometry. Papers dedicated to Atiyah, Bott, Hirzebruch and Singer.  Surv.\ Differ.\ Geom., Suppl.\ J.\ Differ.\ Geom.\ {\bf 7} (2000), 221--257.

\bibitem{GZ2}
Victor Guillemin and Catalin Zara, \emph{$1$-skeleta, Betti numbers, and equivariant cohomology}, Duke Math.\ J.\ {\bf 107} (2001), no.\ 2, 283--349.

\bibitem{ChenHe}
Chen He, \emph{Localization of equivariant cohomology rings of real Grassmannians}, preprint, \texttt{arXiv:1609.06243}.

\bibitem{Hitchin}
Nigel J.\ Hitchin, \emph{K\"ahlerian twistor spaces},
Proc. Lond. Math. Soc., III. Ser.\ {\bf 43} (1981), 133--150.

\bibitem{LeBrunSalamon}
Claude LeBrun and Simon Salamon, \emph{Strong rigidity of positive quaternion-K\"ahler manifolds}, Invent.\ Math.\ {\bf 118} (1994), no.\ 1, 109--132.

\bibitem{Occhetta}
Gianluca Occhetta, Eleonora A.\ Romano, Luis E.\ Sol\'a Conde, and  Jaros\l aw A.\ Wi\'sniewski, \emph{High rank torus actions on contact manifolds}, Sel.\ Math., New Ser.\ {\bf 27} (2021), no.\ 1, Paper no.\ 10, 34 p.



\bibitem{oproiu} Vasile Oproiu, \emph{Almost quaternal structures},
An.\ Stiint.\ Univ.\ Al.\ I.\ Cuza Iasi, n.\ Ser., Sect.\ Ia {\bf 23} (1977), 287--298.

\bibitem{Massey}
 William S.\ Massey, \emph{Non-existence of almost-complex structures on quaternionic projective spaces}, Pacific J. Math. {\bf 12} (1962), 1379--1384.

 \bibitem{Penrose} Roger Penrose, \emph{Techniques of differential topology in relativity}, CBMS-NSF Regional Conference Series in Applied Mathematics. 7. Philadelphia, Pa.: SIAM, Society for Industrial and Applied Mathematics. VIII, 1972.
 
\bibitem{PoonenRodriguez}
Björn Poonen and Fernando Rodriguez-Villegas, \emph{Lattice polygons and the number $12$}, Am.\ Math.\ Mon.\ {\bf 107} (2000), no.\ 3, 238--250.

\bibitem{Pontecorvo}
Massimiliano Pontecorvo, \emph{Complex structures on quaternionic manifolds}, Differ.\ Geom.\ Appl.\ {\bf 4} (1994), no.\ 2, 163--177.

\bibitem{Salamon}
Simon Salamon, \emph{Quaternionic K\"ahler manifolds}, Invent.\ Math.\ {\bf 67} (1982), 143--171.

\bibitem{S}
Simon Salamon, \emph{Differential geometry of quaternionic manifolds}, Ann.\ Sci.\ \'{E}c.\ Norm.\ Sup\'{e}r.\ {\bf 19} (1986), no.\ 1, 31--55.

\bibitem{Sankaran}
Parameswaran Sankaran, \emph{Nonexistence of almost complex structures on Grassmann manifolds}, Proc. Amer. Math. Soc. {\bf 113} (1991), no. 1, 297--302.

\bibitem{Tang}
Zi-Zhou Tang, \emph{Nonexistence of weakly almost complex structures on Grassmannians}, Proc. Amer. Math. Soc. {\bf 121} (1994), no. 4, 1267--1270.


\bibitem{Tian}
Gang Tian, \emph{On Calabi’s conjecture for complex surfaces with positive first Chern class}, Invent.\ Math.\ {\bf 101} (1990), no.\ 1, 101--172.

\bibitem{Wolf}
Joseph A.\ Wolf, \emph{Complex homogeneous contact manifolds and quaternionic symmetric spaces}, J.\ Math.\ Mech.\ {\bf 14} (1965), 1033--1047.

\end{thebibliography}
\end{document}